\documentclass[11pt,a4paper]{article}
\usepackage[utf8]{inputenc}
\DeclareUnicodeCharacter{2212}{-}
\usepackage[T1]{fontenc}
\usepackage[english]{babel}
\usepackage{csquotes}  
\usepackage{hyperref}
\usepackage{graphicx}
\usepackage{lmodern}
\usepackage{geometry}
\usepackage{amsmath}
\usepackage{amsthm}
\usepackage{indentfirst}
\usepackage{tabto}
\usepackage{color}
\usepackage{algorithm}
\usepackage{algorithmic}
\definecolor{bf}{rgb}{0,0,0.6} 
\usepackage{amssymb}
\usepackage{amsmath}
\usepackage{stmaryrd} 
\usepackage{fullpage}
\usepackage{eso-pic}
\usepackage{float}
\usepackage{titlesec}
\usepackage{bigints}
\usepackage{fancyhdr}
\usepackage{mathrsfs}
\usepackage[shortlabels]{enumitem}
\usepackage{scalerel,stackengine}
\usepackage{yhmath}
\usepackage{verbatim}
\usepackage{authblk}
\usepackage{needspace}
\usepackage{graphicx}
\usepackage{subcaption}
\usepackage{bbold}
\usepackage{pagesel}
\usepackage[section]{placeins}

\usepackage[disable]{todonotes}
\newcommand{\todoJules}[1]{\todo[color=green!30,line]{#1}} 

\setuptodonotes{line, color=blue!30}
\numberwithin{equation}{section}

\usepackage[maxnames=99,maxcitenames=3,backend=bibtex]{biblatex}
\bibliography{bibliography}
\AtEveryBibitem{\clearfield{month}\clearfield{url}\clearfield{issn}\clearfield{isbn}\clearfield{address}}
\DeclareFieldFormat[misc]{title}{\mkbibquote{#1}}
\DeclareBibliographyDriver{book}{%
	\usebibmacro{bibindex}%
	\usebibmacro{begentry}%
	\usebibmacro{author/editor+others/translator+others}%
	\setunit{\printdelim{nametitledelim}}\newblock
	\usebibmacro{maintitle+title}%
	\newunit
	\printlist{language}%
	\newunit\newblock
	\usebibmacro{byauthor}%
	\newunit\newblock
	\usebibmacro{byeditor+others}%
	\newunit\newblock
	\printfield{edition}%
	\newunit\newblock
	\usebibmacro{series+number}%
	\newunit
	\iffieldundef{maintitle}
		{\printfield{volume}%
		 \printfield{part}}
		{}%
	\newunit
	\printfield{volumes}%
	\newunit\newblock
	\printfield{note}%
	\newunit\newblock
	\usebibmacro{publisher+location+date}%
	\newunit\newblock
	\usebibmacro{chapter+pages}%
	\newunit
	\printfield{pagetotal}%
	\newunit\newblock
	\iftoggle{bbx:isbn}
		{\printfield{isbn}}
		{}%
	\newunit\newblock
	\usebibmacro{doi+eprint+url}%
	\newunit\newblock
	\usebibmacro{addendum+pubstate}%
	\setunit{\bibpagerefpunct}\newblock
	\usebibmacro{pageref}%
	\newunit\newblock
	\iftoggle{bbx:related}
		{\usebibmacro{related:init}%
		 \usebibmacro{related}}
		{}%
	\usebibmacro{finentry}}
\DeclareBibliographyDriver{inproceedings}{%
	\usebibmacro{bibindex}%
	\usebibmacro{begentry}%
	\usebibmacro{author/translator+others}%
	\setunit{\printdelim{nametitledelim}}\newblock
	\usebibmacro{title}%
	\newunit
	\printlist{language}%
	\newunit\newblock
	\usebibmacro{byauthor}%
	\newunit\newblock
	\usebibmacro{in:}%
	\usebibmacro{maintitle+booktitle}%
	\newunit\newblock
	\usebibmacro{event+venue+date}%
	\newunit\newblock
	\usebibmacro{byeditor+others}%
	\newunit\newblock
	\usebibmacro{series+number}%
	\newunit
	\iffieldundef{maintitle}
		{\printfield{volume}%
		 \printfield{part}}
		{}%
	\newunit
	\printfield{volumes}%
	\newunit\newblock
	\printfield{note}%
	\newunit\newblock
	\printlist{organization}%
	\newunit
	\usebibmacro{publisher+location+date}%
	\newunit\newblock
	\usebibmacro{chapter+pages}%
	\newunit\newblock
	\iftoggle{bbx:isbn}
		{\printfield{isbn}}
		{}%
	\newunit\newblock
	\usebibmacro{doi+eprint+url}%
	\newunit\newblock
	\usebibmacro{addendum+pubstate}%
	\setunit{\bibpagerefpunct}\newblock
	\usebibmacro{pageref}%
	\newunit\newblock
	\iftoggle{bbx:related}
		{\usebibmacro{related:init}%
		 \usebibmacro{related}}
		{}%
	\usebibmacro{finentry}}
\newcommand{\jules}[1]{\textcolor{red}{#1}}

\newcommand{\dd}{\mathrm{d}} 

\hypersetup{
	colorlinks=true,
	citecolor=green,
	linkcolor=blue,
}

\newtheorem{dft}{Definition}[section]
\newtheorem{te}[dft]{Theorem}
\newtheorem{rem}[dft]{Remark}
\newtheorem{prop}[dft]{Proposition}

\newtheorem{lemm}[dft]{Lemma}

\title{Estimating an initial telomere length distribution from the Laplace transform of its senescence times distribution}
\author{Jules Olayé$^{*}$}
\date{}
\begin{document}
	\maketitle
	\let\thefootnote\relax\footnotetext{$^{*}$  Université de Toulouse, CNRS, Institut de Mathématiques de Toulouse, 31062 Toulouse, France.}
	\let\thefootnote\relax\footnotetext{\hspace{2.525mm}Mail: \href{mailto:jules.olaye@math.univ-toulouse.fr}{jules.olaye@math.univ-toulouse.fr}}
	\begin{abstract}
	This work follows from a previous study on the estimation of an initial distribution of telomere length from a senescence times  distribution done in~\cite{olaye_transport_2026}. In this previous study, we have presented an estimation method based on the fact that our telomere shortening model can be approximated by a transport equation. This method has encouraging results, but fails to provide a good estimation when the variability of the initial  telomere length distribution is too small. We improve here this method by approximating our model with an advection-diffusion equation, which  allows us to better take into account the randomness of the shortening values. We show that under this approximation, there exists a simple link between the Laplace transform of the initial telomere length distribution and that of the senescence times distribution. Then, by using a numerical method for inverting Laplace transforms called \textit{Gaver-Stehfest algorithm}, we exploit this link to construct a new estimator.
	\end{abstract}
	\noindent\textit{\small{Keywords: Inverse problem, advection-diffusion equation, Laplace transform, Gaver-Stehfest algorithm, telomeres}}

	\section{Introduction}
	\paragraph{Motivation and presentation of the problem.} Telomeres are DNA sequences located at the end of chromosomes that shorten during cell divisions. When one of the telomeres of a cell becomes critically short, which is around $27$ base pairs for the yeast~\cite{rat_mathematical_2025}, this cell enters a state called~\textit{senescence}, where it stops to divide~\cite{abdallah_2009,bourgeron_2015,Martin2021,Xu2013}. The study of this phenomenon has gained in interest over the last few years, due to its link with the apparition of cancer~\cite{robinson_telomerase_2022}. This has led to the development of mathematical models representing the phenomenon, studied with both theoretical and numerical approaches~\cite{arino_mathematical_1995,benetos_stochastic_2025,benetos_branching_2025,bourgeron_2015,olaye_long-time_2026,rat_mathematical_2025}. Due to the fact that a cell becomes senescent when it has a too short telomere, the initial distribution of telomere lengths and the distribution of senescence times are deeply linked. We are interested here in better understanding this link, by solving the inverse problem which consists in estimating the initial telomere length distribution from a senescence times distribution in a single-telomere~model.
	
	
	
	\paragraph{Context of the study and previous results.} This work is the direct continuation of the article~\cite{olaye_transport_2026}. In this previous study, we have developed a telomere shortening model representing cells with one telomere. This model corresponds to a system of integro-differential equations representing the evolution of the telomere length distribution and the senescence times distribution, and is given here in~\eqref{eq:PDE_model_telomeres_one_telo_chaplaplace}. Then, we have proposed a first method to solve the inverse problem presented in the previous paragraph, based on a model approximation by a transport equation, see \hbox{\cite[Eq.~$3.5$]{olaye_transport_2026}}. The interest of this model approximation is that it allows us to retrieve a well-posed inverse problem, see~\cite[Remark~$2.2$]{olaye_transport_2026}. The main limitation of the estimation method presented in this previous article is that it does not work very well when the initial telomere length distribution has a small variability, see~\cite[Section~$5.1.2$]{olaye_transport_2026}. In the current work, our aim is to improve our estimation results on such initial distributions.
	
	
	The estimation strategy presented in~\cite{olaye_transport_2026} is inspired by the following works in protein depolymerisation~\hbox{\cite{armiento_estimation_2016,doumic_asymptotic_2026}}. In these two articles, the context is similar: a trait progressively degrades to~$0$, and the authors aim to retrieve the initial distribution from the density of the trait at $0$. In~\cite{armiento_estimation_2016}, the inverse problem has been solved thanks to a model approximation by a transport equation, obtained through a scaling limit. This type of approximation is called~\textit{first-order approximation}, and is the type of approximation used in~\cite{olaye_transport_2026}. In order to obtain better estimation results, this approximation has been refined in~\cite{doumic_asymptotic_2026} by approximating their model by a transport-diffusion equation. This type of approximation is called~\textit{second-order approximation}, and its main idea is to add a diffusion term to take into account the fluctuations around the scaling limit related to the central limit theorem. After having obtained this model approximation, the authors established observability results using Carleman inequalities, allowing them to justify an estimation method based on Tikhonov regularisation. They then developed a Kalman observer for the numerical reconstruction of the initial distribution.
	

	

	
	\paragraph{Estimation strategy.} The main reason that the estimator developed in~\cite{olaye_transport_2026} does not work when the initial distribution has a small variability is that the approximated model used to construct it does not take into account well the variability of the shortening values at each division. To manage this issue, in the current work, we develop an estimation strategy that better takes into account this variability. This strategy is inspired by the one followed in~\cite{olaye_transport_2026}, and consists in the three following steps. First, we refine the first-order model approximation obtained in~\cite{olaye_transport_2026}, by obtaining a second-order model approximation, as done in~\cite{doumic_asymptotic_2026}. Then, we show that there exists a simple link between the Laplace transform of the senescence times distribution and that of the initial telomere length distribution. Finally, we use a numerical method for inverting Laplace transforms called Gaver-Stehfest algorithm~\cite{gaver_observing_1966,stehfest_algorithm_1970,stehfest_remark_1970} to retrieve the initial distribution of telomere length.
	
	\paragraph{Contributions.} Our first contribution is to provide precise bounds for the second-order approximation. As mentioned before, the first-order and second-order approximations are obtained by a scaling limit of our model. In~\cite{doumic_asymptotic_2026}, the authors proved on a similar discrete model that, denoting by $N > 0$ the scaling parameter used to obtain the approximation, the error in \hbox{$L^2$-norm} between their model and a transport-diffusion model is at most of order $1/N^{3/2}$. They also obtained a bound that increases linearly with time. We improve here this result by obtaining, on more restrictive assumptions, a bound of order $1/N^2$ for the pointwise approximation error, that can be extended to all the \hbox{$L^p$-norm}, where $p>0$. We also prove that the bound on the error decreases exponentially fast after a certain time. The bounds we have obtained on the second-order approximation error allow us to provide a first theoretical justification that telomere shortening models can be approximated by transport-diffusion equations. Indeed, this approximation has already been studied and justified from a modelling perspective in previous works~\cite{portillo_influence_2023,wattis_mathematical_2020}, but no theoretical results related to it have been obtained.

	Our second contribution is to present and rigorously analyse on simulations a new method for solving our inverse problem based on a Laplace transform inversion. This new method improves the estimation results presented in~\cite{olaye_transport_2026} when the initial distribution has a small coefficient of variation. By presenting this new method, we also complete what is done in~\cite{doumic_asymptotic_2026} by providing an alternative method for estimating the initial distribution of advection-diffusion~equations on $\mathbb{R}_+$ from the observation of its value at the boundary of the space. Compared to~\cite{doumic_asymptotic_2026}, the advantage of our new method is that it is based on a simple link between the initial distribution and the trait density at the boundary. As a result, our new method is easier to understand and better highlights the fact that there is a deep link between the initial distribution and the distribution of the trait at the boundary of the space on such models. For the moment, we have however not obtained a theoretical result on the estimation error done with this method, due to difficulties presented in Section~\ref{subsubsect:estimation_gaver_stehfest_noise_free}. The latter corresponds to a work in progress.

	
	
	\paragraph{Difficulty for the model approximation.} The main difficulty in obtaining a bound of order~$1/N^2$ is that the third derivative in space of the telomere length density in the approximated model explodes when $N \to +\infty$. This derivative is the main term that describes the error between the original model and the approximated model due to a Taylor expansion, see~\eqref{eq:model_approximation_intermediate_first}. Thus, it is necessary to control carefully the explosion of this derivative as $N\rightarrow +\infty$ to obtain a qualitative bound on the approximation error. Compared to~\cite{doumic_asymptotic_2026}, we take into account here the evolution of the impact of the third derivative with time. This allows us to improve the bound on the model approximation. To do this, we first rewrite the approximation error as the integral of several sub-errors, as done in~\cite{olaye_transport_2026}. Then, we show that these sub-errors dissipate over time by using a maximum principle. What changes compared to~\cite{olaye_transport_2026} is that we cannot use the method of characteristics to obtain our bounds when we prove the dissipation of the sub-errors. We use here instead that the second derivative in space of our approximated model can be explicitly written thanks to the heat kernel. This explicit representation allows us to simplify the use of a maximum principle to bound the second derivative in space of the approximation by the eigenfunctions of the operator associated to its equation. It also allows us to exploit heat kernel estimates when bounding the third derivative in space of the approximated model, see~Sections~\ref{subsect:auxiliary_statements_derivatives} and~\ref{subsect:proof_auxiliary_statements}.
	

	\paragraph{Plan of the article.} The paper is organised as follows. First, we present the model, its approximation, and the main results of the paper in Section~\ref{sect:model_approximation_result}.  Then, we prove that our model can be approximated by an advection-diffusion equation in Section~\ref{sect:proof_model_approximation}. Thereafter, we present our estimation method based on the Gaver-Stehfest algorithm in Section~\ref{sect:gaver_stehfest}. Finally, in Section~\ref{sect:discussion}, we discuss our results and the perspectives of this work. The appendices~\ref{appendix:proof_link_laplace}-\ref{sect:round_off_errors} are devoted to the proofs of certain statements given during the paper, and to additional information about the way we have done our simulations.

	\section{The model, its approximation, and the main results}\label{sect:model_approximation_result}
	
	This section is devoted to the presentation of the notations and of the results of this work. First, in Section~\ref{subsect:general_model}, we present our model for telomere shortening and its scaled version. Then, in Section~\ref{subsect:approximated_models}, we present the second-order approximation of this model. Finally, in Section~\ref{subsect:main_result_chaplaplace}, we present the assumptions and the main results of the paper. 
	\subsection{The model}\label{subsect:general_model}
	
	The model we use in this work is the same as the one presented in~\cite[Eq. $2.1$]{olaye_transport_2026}. It represents the evolution of the telomere length density $n$ and the senescence times density~$n_{\partial}$ when several cell lineages with a single telomere are tracked. We assume that cells divide at a rate $\tilde{b} > 0$, and that at each cell division, the telomere of the dividing cell is shortened by a random value. The distribution of the shortening values is represented by a density $\tilde{g} : [0,\tilde{\delta}] \rightarrow \mathbb{R}$, where $\tilde{\delta} >0$ is the maximum shortening value, which has a finite third moment. We also assume that cells go to senescence when their telomere length goes below a critical value $0$. Then, our model is given by the following system of integro-differential equations (see \cite[Section $2.1$]{olaye_transport_2026} for more information)

	\begin{equation}\label{eq:PDE_model_telomeres_one_telo_chaplaplace}
		\begin{cases}
			\partial_t n(t,x) = \tilde{b}\int_0^{\tilde{\delta}}n\left(t,x + v\right)\tilde{g}(v) \mathrm{d}v - \tilde{b}n(t,x), & \forall t\geq0,\,x\geq0,\\
			n_{\partial}(t) = \tilde{b}\int_0^{\tilde{\delta}} n(t,y)(1-\tilde{G}(y))\mathrm{d}y ,& \forall t\geq0, \\ 
			n(0,x) = n_0(x), & \forall x\geq 0,
		\end{cases}
	\end{equation}
	where $n_0 \in L^{1}\left(\mathbb{R}_+\right)$ is the initial distribution of telomere length, and $\tilde{G}(y) := \int_0^y \tilde{g}(s) \dd s$ for all~$y\in[0,\tilde{\delta}]$. In all this work, we assume that $n_0$ is non-negative and verifies $\int_{x\in\mathbb{R}} n_0(x) \dd x = 1$. Our aim here is to estimate $n_0$ from the senescence times distribution $n_{\partial}$.
	
	As in~\cite{olaye_transport_2026}, we need to consider the scaled version of this model to obtain a model approximation. To do so, we introduce a scaling parameter $N > 0$, and assume that there exist $\delta > 0$, $g : [0,\delta] \rightarrow \mathbb{R}_+$ a probability density function and $b >0$ such that
	$$
	\tilde{\delta} = \frac{\delta}{N}, \hspace{8mm}\forall x \in [0,\tilde{\delta}]:\,\tilde{g}(x) = Ng(Nx),\hspace{4mm}\text{ and }\hspace{4mm}\tilde{b} = bN. 
	$$
	This assumption, when $N$ is large, means that the telomere shortening value is small at each division, and that the cell division rate is large. It is biologically relevant, and we refer to~\cite[Section $2.3$]{olaye_transport_2026} for more information. Then, by proceeding as in~\cite[Section $3.1$]{olaye_transport_2026}, we obtain the following scaled version of the model~\eqref{eq:PDE_model_telomeres_one_telo_chaplaplace}, depending on the scaling parameter $N > 0$,
	\begin{equation}\label{eq:scaled_model}
		\begin{cases}
			\partial_t n^{(N)}(t,x) = bN\int_0^{\delta}\left[n^{(N)}\left(t,x + \frac{v}{N}\right)- n^{(N)}(t,x)\right]g\left(v\right) \dd v , & \forall t\geq0,\,x\geq0,\\
			n_{\partial}^{(N)}(t) = b\int_0^{\delta} n^{(N)}\left(t,\frac{v}{N}\right)\left(1-G(v)\right)\dd v ,& \forall t\geq0, \\ 
			n^{(N)}(0,x) = n_0\left(x\right), & \forall x\geq0,
		\end{cases}
	\end{equation}
	where $G(y) := \int_0^y g(s) \dd s$ for all~$y\in[0,\delta]$. The above model is the one for which we find an approximation, and on which we solve the inverse problem. Specifically, we do these by studying the behaviour of $n^{(N)}$ and $n_{\partial}^{(N)}$ when $N \rightarrow \infty$. 
	
	We conclude this section by giving the following result, which deals with the uniqueness of a solution 
	to the equation verified by $n^{(N)}$ in~\eqref{eq:scaled_model}, where $N>0$. Its proof relies on a classical fixed point argument, and is provided in~\cite[Section~$3.$A$.1$]{olaye_thesis_2025}.
	\begin{prop}[Well-posedness of~\eqref{eq:scaled_model}]\label{prop:uniqueness_solution}
	For all $N > 0$, there exists a unique non-negative solution in~$C\left(\mathbb{R}_+;\,L^1\left(\mathbb{R}_+\right)\right)$ to
		\begin{equation}\label{eq:scaled_model_first_third_lines}
			\begin{cases}
				\partial_t n^{(N)}(t,x) = bN\int_0^{\delta}\left[n^{(N)}\left(t,x + \frac{v}{N}\right)- n^{(N)}(t,x)\right]g\left(v\right) \dd v , & \forall t\geq0,\,x\geq0,\\
				n^{(N)}(0,x) = n_0\left(x\right), & \forall x\geq0.
			\end{cases}
		\end{equation}
	\end{prop}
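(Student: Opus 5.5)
Fix $N>0$ and rewrite the equation in mild form. Define on $L^1(\mathbb{R}_+)$ the operator
\[
(K f)(x) := bN\int_0^{\delta} f\!\left(x+\tfrac{v}{N}\right) g(v)\,\dd v ,\qquad x\ge 0 .
\]
The equation in~\eqref{eq:scaled_model_first_third_lines} then reads $\partial_t n^{(N)} = K n^{(N)} - bN\, n^{(N)}$. Since $g$ is a probability density, Fubini gives
\[
\|Kf\|_{L^1}\le bN\int_0^\delta g(v)\int_0^\infty |f(x+v/N)|\,\dd x\,\dd v\le bN\|f\|_{L^1}.
\]
So $K$ is a bounded linear operator of norm at most $bN$. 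It is also positivity preserving. The plan is to treat~\eqref{eq:scaled_model_first_third_lines} as a linear ODE in the Banach space $L^1(\mathbb{R}_+)$ with a bounded generator.

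\textbf{Existence.} The loss term can be absorbed by setting $m(t)=e^{bNt}n^{(N)}(t)$. Then $n^{(N)}$ solves the equation if and only if
\[
m(t)=n_0+\int_0^t K m(s)\,\dd s .
\]
I will apply the Banach fixed point theorem to the map $\Phi(m)(t)=n_0+\int_0^t Km(s)\,\dd s$ on $C([0,T];L^1(\mathbb{R}_+))$. To make $\Phi$ a contraction, either work on a short interval with $T<1/(bN)$, or use the weighted norm $\sup_{t\le T}e^{-2bNt}\|m(t)\|_{L^1}$, which gives contraction constant $1/2$ on any $[0,T]$. This yields a unique continuous solution on every $[0,T]$, hence on $\mathbb{R}_+$, and explicitly $m(t)=e^{tK}n_0=\sum_k t^kK^kn_0/k!$. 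Uniqueness holds in $C(\mathbb{R}_+;L^1)$ for arbitrary (not necessarily non-negative) solutions. The same follows from Grönwall: the difference $w$ of two solutions satisfies $\|w(t)\|\le 2bN\int_0^t\|w(s)\|\,\dd s$, so $w=0$.

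\textbf{Non-negativity.} This comes from the series representation. Since $n_0\ge0$ and $K$ preserves positivity, every term $t^kK^kn_0/k!$ is non-negative. Hence $n^{(N)}(t)=e^{-bNt}\sum_k t^kK^kn_0/k!\ge 0$. Equivalently, the Picard iterates starting from $n_0$ stay non-negative, and non-negativity passes to the $L^1$ limit.

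\textbf{Main obstacle.} The only delicate point is the meaning of "solution" and of $\partial_t$. Since $K$ is bounded, the mild solution is in fact $C^1(\mathbb{R}_+;L^1)$ and satisfies the equation in $L^1$ for every $t$. Conversely, any solution in $C(\mathbb{R}_+;L^1)$ satisfying the equation in the distributional sense in $t$ also satisfies the integral formulation. Checking this equivalence carefully is what makes the uniqueness class $C(\mathbb{R}_+;L^1(\mathbb{R}_+))$ legitimate. A second point to verify is that the boundary region causes no issue. Only values of $f$ at points $x+v/N\ge0$ are used, so no boundary condition at $x=0$ is needed, and $K$ really maps $L^1(\mathbb{R}_+)$ into itself.
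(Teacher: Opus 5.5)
Your proposal is correct and follows the same route as the paper, which only says that the result rests on a classical fixed point argument and defers the details to the author's thesis. Your Banach fixed point on $C([0,T];L^1(\mathbb{R}_+))$ for the bounded, positivity-preserving operator $K$, with non-negativity read off from the series $e^{tK}n_0$ and uniqueness from Grönwall, is exactly that argument.
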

	\begin{rem}
	The uniqueness of a solution to the full system given in~\eqref{eq:scaled_model} is a question that remains open actually. The reason is that we are not able to prove that the second line of~\eqref{eq:scaled_model} has a unique solution in $L^\infty\left(\mathbb{R}_+\right)$ or $L^1\left(\mathbb{R}_+\right)$. \todoJules{Phrase sur non-injectivité ou pas ?}
	\end{rem}
	\begin{rem}\label{rem:mass_conservation}
		In view of \cite[Proposition~A.$1$]{olaye_transport_2026}~and~\cite[Remark~$2.1$]{olaye_transport_2026}, $n^{(N)}$ and $n_{\partial}^{(N)}$ are non-negative for all $N>0$, and verify for all $t\geq0$
		$$
		\int_0^{\infty} n^{(N)}(t,x) \dd x + \int_0^{t} n_{\partial}^{(N)}(s) \dd s = 1.
		$$
	\end{rem}
	\subsection{The approximated model}\label{subsect:approximated_models}

	We now provide the model approximation that allows us to solve this inverse problem. To do so, let us first introduce the following notation
	\begin{equation}\label{eq:definition_moments_laplace}
		\forall i\in\{1,2,3\}: \hspace{2mm} m_i := \int_0^{\delta} v^i g(v) \dd v.
	\end{equation}
	It has been proved in~\cite[Proposition~$3.2$]{olaye_transport_2026} that the limit of~\eqref{eq:scaled_model} when $N\rightarrow +\infty$ is a transport equation with an absorbing state at $x = 0$, and that the rate of convergence towards this approximation is of order $\frac{1}{N}$. Then, to obtain a better estimation, we need to take into account in our new approximated model the telomere length variation of order $\frac{1}{N}$ at each division. By doing a second-order Taylor expansion in the first line of~\eqref{eq:scaled_model}, one can easily obtain that for all $t\geq0$ and~$x\geq0$,
	\begin{equation}\label{eq:model_approximation_intermediate_first}
		\begin{aligned}
			\partial_t n^{(N)}(t,x) &\underset{N\rightarrow+\infty}{\approx} bN\int_0^{\delta}\left[\frac{v}{N}\partial_x n^{(N)}(t,x) + \frac{v^2}{2N^2}\partial_x^2 n^{(1)}(t,x)\right]g(v) \,\mathrm{d}v \\ 
			&=  bm_1\partial_x n^{(N)}(t,x) + \frac{bm_2}{2N}\partial_x^2 n^{(N)}(t,x).
		\end{aligned}
	\end{equation}
	In addition, by doing now a Taylor expansion in the second line of~\eqref{eq:scaled_model} and using the following equalities $\int_0^{\delta} (1- G(v)) \dd v = m_1$ and $\int_{0}^{\delta} v(1-G(v)) \dd v = \frac{m_2}{2}$ (obtained by integration by part), one can easily get that for all $t\geq0$,
	\begin{equation}\label{eq:model_approximation_intermediate_second}
		\begin{aligned}
			n_{\partial}^{(N)}(t) &\underset{N\rightarrow+\infty}{\approx} b\int_0^{\delta} \left[n^{(N)}(t,0) + \frac{v}{N}\partial_x n^{(N)}(t,0)\right](1-G(v))\,\mathrm{d}v \\ 
			&= bm_1 n^{(N)}\left(t,0\right) + \frac{bm_2}{2N}\partial_x n^{(N)}\left(t,0\right).
		\end{aligned}
	\end{equation}
	Then, in view of~\eqref{eq:model_approximation_intermediate_first} and~\eqref{eq:model_approximation_intermediate_second}, we have the conjecture that when $N$ is large, the system~\eqref{eq:scaled_model} can be approximated by the following 
	\begin{equation}\label{eq:approximation_transport_diffusion}
		\begin{cases}
			\partial_t u^{(N)}(t,x) = bm_1\partial_{x} u^{(N)}(t,x) + \frac{bm_2}{2N}\partial_{x}^2 u^{(N)}(t,x), & \forall t\geq0,\,x\geq0,\\
			\partial_tu^{(N)}(t,0) = bm_1\partial_{x} u^{(N)}(t,0), & \forall t\geq0,\\
			u^{(N)}_{\partial}(t) = bm_1u^{(N)}(t,0) + \frac{bm_2}{2N}\partial_xu^{(N)}(t,0), & \forall t\geq0,\\ 
			u^{(N)}(0,x) = n_0(x), & \forall x\geq0.
		\end{cases}
	\end{equation}
	We refer to the proof of~\cite[Proposition~$2.2$]{doumic_asymptotic_2026}  to see why~\eqref{eq:approximation_transport_diffusion} is well-posed and why we have~\hbox{$u^{(N)}\in\mathcal{C}^0\left(\mathbb{R}_+,H^2\left(\mathbb{R}_+\right)\right)\cap \mathcal{C}^1\left(\mathbb{R}_+,L^2\left(\mathbb{R}_+\right)\right)$} when $n_0 \in H^2\left(\mathbb{R}_+\right)$. The boundary condition we take in~our approximated model, see the second line of~\eqref{eq:approximation_transport_diffusion}, is the same as in~\cite{doumic_asymptotic_2026}. In view of the first line of~\eqref{eq:approximation_transport_diffusion}, this boundary condition is equivalent to having~$\frac{bm_2}{2N}\partial_{x}^2 u^{(N)}(t,0) = 0$ for all~\hbox{$t\geq0$}, so to having a second derivative of $0$ at the boundary. We use this condition because a Dirichlet or a Neumann boundary condition do not allow us to control the third derivative of~$u^{(N)}$~(see~Lemma~\ref{eq:approx_eigen_with_mu_sigma}), which is crucial here to control the remainder of the Taylor expansion~(see~\eqref{eq:model_approximation_intermediate_first}). Qualitatively, this is because having a second derivative of $0$ allows us to control the too abrupt variations of $u^{(N)}$ near $0$ that can result in a bad approximation.

	\subsection{Notations and main theoretical results}\label{subsect:main_result_chaplaplace}
	
	In this work, we have two main theoretical results. The first one corresponds to the qualitative bounds we have obtained on the model approximation. The second one is the link between the Laplace transforms of $n_0$ and $u_{\partial}^{(N)}$ we have established, for all $N>0$, that allows us to construct estimators of $n_0$. To present these results, we need to introduce some notations. 
	
	\paragraph{Notations.}  First, we denote for all $f : \mathbb{R}_+ \rightarrow \mathbb{R}$ and $p\in \mathbb{C}$ such that the integral below converges
	\begin{equation}\label{eq:definition_laplace_transform}
		\mathcal{L}(f)(p) := \int_0^{\infty} e^{-px}f(x) \dd x,
	\end{equation}
	which corresponds to the Laplace transform of $f$. Then, we denote for all $f : \mathbb{R}_+ \rightarrow \mathbb{R}$
	\begin{equation}\label{eq:abscissa_convergence}
	\mathcal{R}(f) := \inf\left(\left\{r\in\mathbb{R}\,|\,\int_0^{\infty} e^{-rx}\left|f(x)\right| \dd x < +\infty \right\}\right),
\end{equation}
	which corresponds to the abscissa of convergence of $\mathcal{L}(f)$. Notably, for all $p\in \mathbb{C}$ such that $\text{Re}(p) > \mathcal{R}(f)$, the integral presented in~\eqref{eq:definition_laplace_transform} converges. Thereafter, for all $N > 0$ and $\lambda >0$, we define the three following constants
	\begin{equation}\label{eq:approximation_eigenvalues_chaplaplace}
		\lambda_N := \lambda\left(1 - \frac{\lambda m_2}{2m_1N}\right), \hspace{2.2mm}C_N := \frac{b}{2}\left[1 - \mathcal{L}(g)\left(\frac{2m_1\lambda_N}{m_2\lambda } \right)\right], \hspace{2.2mm}\beta_N := \begin{cases}
			NC_N, & \text{ if } N\leq \frac{bm_1\lambda_N}{C_N} ,\\ 
			bm_1\lambda_N , & \text{ if } N > \frac{bm_1\lambda_N}{C_N}. 
		\end{cases}
	\end{equation}
	Finally, we consider  for all $N>0$ the function $q_N : \mathbb{C} \rightarrow \mathbb{C}$ defined for all $p\in\mathbb{C}$ as 
	\begin{equation}\label{eq:definition_qN}
		q_N(p) := bm_1p + \frac{bm_2}{2N}p^2,
	\end{equation}
	and the set
	\begin{equation}\label{eq:definition_P}
		\mathcal{P}_N := \left\{p\in \mathbb{C}\,|\,Re(p) > \mathcal{R}(n_0),\,Re(q_N(p)) > \max\left(\mathcal{R}\left(u_{\partial}^{(N)} \right), - (bm_1)^2\frac{2N}{bm_2}\right)\right\}.
	\end{equation}
\paragraph{Main theoretical results.} The first main theoretical result of the paper is the following. It allows us to justify that we can approximate~\eqref{eq:PDE_model_telomeres_one_telo_chaplaplace} by~\eqref{eq:approximation_transport_diffusion}, and is proved in Section~\ref{sect:proof_model_approximation}.

\begin{te}[Bounds on approximation errors]\label{te:model_approximation}
	Assume that $n_0\in H^3\left(\mathbb{R}_+\right)$, and that there exist $C_{\lambda} > 0$,\,$D_{\lambda} > 0$, $\lambda >0$ and $\lambda' >0$, such that for all $x\geq0$:
	\begin{equation}\label{eq:assumptions_main_result_chaplaplace}
		\left|n'''_0(x)\right| \leq C_{\lambda}\exp\left(-\lambda x\right), \hspace{1.5mm}\text{ and }\hspace{1.5mm} \left|n''_0(x)\right| \leq D_{\lambda}\exp\left(-\lambda x\right)\left(1 - \exp\left(-\lambda' x\right)\right).
	\end{equation}
	Then, there exist $c_1>0$, $c_2 > 0$, $c_3 > 0$ and $c_4>0$ such that the following statements hold.
	\begin{enumerate}[(a)]
		\item For all $N > \frac{\lambda m_2}{2m_1}$, $t\geq 0$, $x\geq 0$, we have\vspace{-0.5mm} \label{te:model_approximation_first}
		\begin{equation}\label{eq:main_result_first_statement}
			\begin{aligned}
				\left|n^{(N)}(t,x)-u^{(N)}(t,x)\right| &\leq  \frac{c_1t}{N^2}\exp\left[-bm_1\lambda_N t  -\lambda x\right]\\
				&+  \frac{1}{C_N}\left(\frac{c_2}{N^3} + \frac{c_3}{N^2}\right)\exp\left[-\beta_Nt -\frac{2Nm_1}{m_2} \frac{\lambda_N}{\lambda}x\right].
			\end{aligned}
		\end{equation}
		\item For all $N > \frac{m_2}{2m_1}\left(2\lambda + \lambda'\right)$, $t\geq0$, we have \label{te:model_approximation_second}\vspace{-0.5mm}
		\begin{equation}\label{eq:main_result_second_statement}
			\left|n_{\partial}^{(N)}(t) - u_{\partial}^{(N)}(t)\right| \leq \frac{bm_1c_1t + c_4}{N^2}\exp\left[-bm_1\lambda_N t\right] +  \frac{bm_1}{C_N}\left(\frac{c_2}{N^3} + \frac{c_3}{N^2}\right)\exp\left[-\beta_Nt\right].
		\end{equation}
	\end{enumerate}
\end{te}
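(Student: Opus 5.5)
The plan is to write the error $e^{(N)} := n^{(N)} - u^{(N)}$ as the solution of the nonlocal equation of~\eqref{eq:scaled_model} with a source term and zero initial datum. We then represent it through Duhamel's formula, and bound each "sub-error" with a maximum principle using exponential supersolutions.

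\textbf{Error equation.} Set $\mathcal{A}_N f(x) := bN\int_0^{\delta}[f(x+\tfrac{v}{N}) - f(x)]g(v)\dd v$. Since $u^{(N)}$ solves the first line of~\eqref{eq:approximation_transport_diffusion}, we have
$$
\partial_t e^{(N)} = \mathcal{A}_N e^{(N)} + R_N, \qquad R_N := \mathcal{A}_N u^{(N)} - bm_1\partial_x u^{(N)} - \frac{bm_2}{2N}\partial_x^2 u^{(N)}, \qquad e^{(N)}(0,\cdot)=0 .
$$
By Taylor's formula with integral remainder, $|R_N(t,x)| \le \frac{bm_3}{6N^2}\sup_{y\in[x,x+\delta/N]}|\partial_x^3 u^{(N)}(t,y)|$. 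Because $\mathcal{A}_N$ generates a positive contraction semigroup $S_N$ (Proposition~\ref{prop:uniqueness_solution} and positivity), Duhamel gives $e^{(N)}(t) = \int_0^t S_N(t-s)R_N(s)\dd s$. It therefore suffices to (i) bound $|\partial_x^3 u^{(N)}(s,\cdot)|$ by explicit exponentials, and (ii) propagate exponentials $x\mapsto e^{-\mu x}$ through $S_N$. For (ii) note that $\mathcal{A}_N e^{-\mu x} = -bN(1-\mathcal{L}(g)(\mu/N))e^{-\mu x}$. Hence $S_N(\tau)e^{-\mu\cdot} = e^{-bN(1-\mathcal{L}(g)(\mu/N))\tau}e^{-\mu\cdot}$, and by positivity $|S_N(\tau)f|\le S_N(\tau)|f|$. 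With $\mu=\lambda$ the rate is $bm_1\lambda_N$ up to $O(1/N^2)$, after adjusting constants. With $\mu = \frac{2Nm_1\lambda_N}{m_2\lambda}$ the rate is $2C_N$ times $N$ (by the definition of $C_N$).

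\textbf{Derivative estimates (main obstacle).} Differentiating~\eqref{eq:approximation_transport_diffusion} twice, $w:=\partial_x^2u^{(N)}$ solves the same advection–diffusion equation with Dirichlet condition $w(t,0)=0$. In the moving frame $y = x + bm_1 t$ this is a heat equation on a half line, so $w$ has an explicit representation via the odd reflection of the heat kernel with initial datum $n_0''$. Using the assumption $|n_0''|\le D_\lambda e^{-\lambda x}(1-e^{-\lambda' x})$ (vanishing at $0$, compatible with the Dirichlet condition), I would build a supersolution of the form $\Phi(t,x) = e^{-bm_1\lambda_N t}(e^{-\lambda x} - e^{-\mu x})$. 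Here $e^{-\lambda x}$ is an approximate eigenfunction with eigenvalue $-bm_1\lambda_N$: indeed $-bm_1\lambda + \frac{bm_2}{2N}\lambda^2 = -bm_1\lambda_N$. The second exponential uses the other root $\mu=\frac{2Nm_1}{m_2}-\lambda$ of the same characteristic polynomial, which is why $N>\frac{\lambda m_2}{2m_1}$ is needed. The maximum principle then gives $|w|\le D\,\Phi$. For $\partial_x^3u^{(N)}$, differentiate the heat-kernel representation once. The part coming from $n_0'''$ is handled like above and gives a term $C e^{-bm_1\lambda_N t - \lambda x}$. The reflected part produces a boundary-layer term of size $N\cdot e^{-\beta t}e^{-\mu x}$, with $\mu\sim 2Nm_1\lambda_N/(m_2\lambda)$, controlled by Gaussian heat-kernel estimates. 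This is the delicate step: one must keep the explosion at most linear in $N$, and obtain the decay rate $\beta_N$. The two regimes in the definition of $\beta_N$ come from taking the minimum of the semigroup rate $NC_N$ and the $u$-decay rate $bm_1\lambda_N$.

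\textbf{Assembling (a) and (b).} Insert $|R_N(s,x)|\le \frac{c}{N^2}\big[e^{-bm_1\lambda_N s-\lambda x} + N e^{-\beta s}e^{-\mu x}\big]$ into Duhamel. For the first term the rates match, so integrating in $s$ gives $\frac{c_1 t}{N^2}e^{-bm_1\lambda_N t-\lambda x}$. For the second, $S_N$ damps $e^{-\mu x}$ at rate $\ge 2NC_N$ while the source decays at rate $\beta_N\le NC_N$. The time integral $\int_0^t e^{-2NC_N(t-s)}e^{-\beta_N s}\dd s\le e^{-\beta_N t}/(NC_N)$ then yields the factor $\frac1{C_N}(\frac{c_2}{N^3}+\frac{c_3}{N^2})$. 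This proves~\eqref{eq:main_result_first_statement}. For (b), write
$$
n_\partial^{(N)}-u_\partial^{(N)} = b\int_0^\delta e^{(N)}(t,\tfrac vN)(1-G(v))\dd v + \Big[b\int_0^\delta u^{(N)}(t,\tfrac vN)(1-G(v))\dd v - u_\partial^{(N)}(t)\Big].
$$
The first piece is bounded by $bm_1$ times (a) at $x\le\delta/N$, where the spatial exponentials are $\le 1$. The second is a Taylor remainder involving $\frac{1}{N^2}\sup|\partial_x^2u^{(N)}|$ near $0$. By the supersolution $\Phi$ this is $\le \frac{c_4}{N^2}e^{-bm_1\lambda_N t}$, provided $\mu$ stays larger than $2\lambda+\lambda'$. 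This is the origin of the condition $N>\frac{m_2}{2m_1}(2\lambda+\lambda')$.
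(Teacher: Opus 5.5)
Your proposal follows essentially the paper's route: the same error equation with the Taylor remainder in $\partial_x^3u^{(N)}$, and Duhamel's formula combined with exponential supersolutions for the nonlocal semigroup. For $\omega=\lambda$ the rate $bN(1-\mathcal{L}(g)(\lambda/N))$ is $\geq bm_1\lambda_N$ exactly, by $1-e^{-y}\geq y-y^2/2$, so no adjustment of constants is needed. The remaining steps also coincide: the eigenfunction supersolution $e^{-bm_1\lambda_N t}\bigl(e^{-\lambda x}-e^{-(\frac{2Nm_1}{m_2}-\lambda)x}\bigr)$ for $\partial_x^2u^{(N)}$, a completing-the-square Gaussian bound on the differentiated image formula for $\partial_x^3u^{(N)}$, and the same splitting for (b). In that third-derivative bound you use $n_0''(0)=0$; both pieces actually decay like $e^{-bm_1\lambda_N t}$, and $\beta_N$ only appears after the Duhamel time integral.

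Two small corrections. First, the image formula is not an odd reflection in the moving frame $x+bm_1t$, where the boundary moves. It is the odd reflection after the gauge change $v^{(N)}=e^{-\frac{Nm_1}{m_2}x-\frac{bNm_1^2}{2m_2}t}\phi$, which turns the problem into the heat equation on the fixed half-line and yields exactly the weighted kernel $\Psi_N(t,x-y)-e^{-\frac{2Nm_1}{m_2}x}\Psi_N(t,-x-y)$. Second, the initial comparison needs $\frac{2Nm_1}{m_2}-\lambda\geq\lambda+\lambda'$, not $\mu>2\lambda+\lambda'$; this is precisely $N\geq\frac{m_2}{2m_1}(2\lambda+\lambda')$.
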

\noindent This theorem is very similar to~\cite[Proposition~$3.2$]{olaye_transport_2026}. The assumptions are also on the derivatives of $n_0$, but this time, the orders of the derivatives are greater. We have an error that first increases with time, and then decreases exponentially fast. This is related to the fact that we take into account the accumulation of the errors, and the fact that they dissipate. We also have that the error decreases exponentially fast with the telomere length. This is related to the exponential dissipation of the distribution of telomere length at each time.

The second statement we present corresponds to the link between the Laplace transform of~$n_0$ and $u_{\partial}^{(N)}$ we have obtained, for all $N > 0$. This statement is very important in our work, because this is from this link that we construct our estimators. Its proof is however not very long, and does not require any specific argument that we would like to emphasise in this work. Hence, we postpone it to Appendix~\ref{appendix:proof_link_laplace}.
\begin{prop}[Link between the Laplace transforms]\label{prop:link_laplace_transforms}
	Assume that the assumptions of~Theorem~\ref{te:model_approximation} hold and that $n_0(0) = 0$. Then, the following equality holds for all $N > 0$ and~$p\in \mathcal{P}_N$
	\begin{equation}\label{eq:link_laplace_transforms}
		\mathcal{L}\left(n_0\right)(p) = \left(1 + p \frac{bm_1}{(bm_1)^2\frac{2N}{bm_2} + q_N(p)}\right)\mathcal{L}\left(u_{\partial}^{(N)}\right)\left(q_N(p)\right),
	\end{equation}
	where $q_N$ is defined in~\eqref{eq:definition_qN}.
\end{prop}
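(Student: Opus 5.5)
The plan is to take the Laplace transform of $u^{(N)}$ in space, with the exponential weight $e^{-px}$, and show that it satisfies a linear ODE in time whose rate is exactly $q_N(p)$. Then take a Laplace transform in time at the frequency $q_N(p)$, and eliminate the unknown boundary trace $u^{(N)}(\cdot,0)$ using the boundary condition (second line of~\eqref{eq:approximation_transport_diffusion}) together with $n_0(0)=0$.

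Write $D:=\frac{bm_2}{2N}$, fix $p\in\mathcal{P}_N$, and set $\Phi(t):=\int_0^\infty e^{-px}u^{(N)}(t,x)\dd x$.

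\textbf{Step 1: spatial identity.} Multiply the first line of~\eqref{eq:approximation_transport_diffusion} by $e^{-px}$ and integrate by parts once in the transport term and twice in the diffusion term. The boundary terms at infinity vanish, which I would justify using $u^{(N)}(t,\cdot)\in H^2$ and the exponential decay inherited from~\eqref{eq:assumptions_main_result_chaplaplace}. This gives
\begin{equation*}
\Phi'(t)=q_N(p)\Phi(t)-\Big(bm_1u^{(N)}(t,0)+Du^{(N)}_x(t,0)\Big)-Dp\,u^{(N)}(t,0)=q_N(p)\Phi(t)-u^{(N)}_\partial(t)-Dp\,u^{(N)}(t,0).
\end{equation*}

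\textbf{Step 2: temporal transform.} Write $q=q_N(p)$ and multiply by $e^{-qt}$. Integrating on $[0,T]$ gives
\begin{equation*}
e^{-qT}\Phi(T)-\mathcal{L}(n_0)(p)=-\int_0^T e^{-qt}\big(u^{(N)}_\partial(t)+Dp\,u^{(N)}(t,0)\big)\dd t .
\end{equation*}
Passing to the limit $T\to\infty$ requires two facts:
\begin{itemize}
\item[(i)] $e^{-qT}\Phi(T)\to0$;
\item[(ii)] $W:=\mathcal{L}(u^{(N)}(\cdot,0))(q)$ converges.
\end{itemize}
Granting these, we get $\mathcal{L}(n_0)(p)=\mathcal{L}(u^{(N)}_\partial)(q)+Dp\,W$.

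\textbf{Step 3: eliminating the trace.} The boundary condition $\partial_tu^{(N)}(t,0)=bm_1\partial_xu^{(N)}(t,0)$ lets us rewrite the third line of~\eqref{eq:approximation_transport_diffusion} as
\begin{equation*}
u^{(N)}_\partial(t)=bm_1u^{(N)}(t,0)+\frac{D}{bm_1}\partial_tu^{(N)}(t,0).
\end{equation*}
Take the Laplace transform in time at $q$ and use $u^{(N)}(0,0)=n_0(0)=0$. This yields
\begin{equation*}
\mathcal{L}(u^{(N)}_\partial)(q)=W\Big(bm_1+\frac{Dq}{bm_1}\Big),\qquad\text{so}\qquad W=\frac{bm_1\,\mathcal{L}(u^{(N)}_\partial)(q)}{(bm_1)^2+Dq}.
\end{equation*}
The denominator is nonzero because $\mathrm{Re}(q)>-(bm_1)^2/D$, which is part of the definition of $\mathcal{P}_N$. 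The same ODE, $\partial_t u^{(N)}(t,0)=-\frac{(bm_1)^2}{D}u^{(N)}(t,0)+\frac{bm_1}{D}u^{(N)}_\partial(t)$, solved by Duhamel's formula, shows that $W$ converges whenever $\mathrm{Re}(q)>\max(\mathcal{R}(u^{(N)}_\partial),-(bm_1)^2/D)$. This settles (ii). Substituting $W$ into Step 2 and using $(bm_1)^2/D=(bm_1)^2\frac{2N}{bm_2}$ gives exactly~\eqref{eq:link_laplace_transforms}.

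\textbf{Main obstacle: the vanishing limit (i).} Convergence of the right-hand side of Step 2 only shows that $e^{-qT}\Phi(T)$ has a limit, not that the limit is $0$. I would first establish the identity for real $p$ large enough, where $\mathrm{Re}(q)$ is large. There, a crude a priori bound such as $|\Phi(t)|\le Ce^{\kappa t}$ forces the limit to vanish. Such a bound should follow from an energy estimate on $e^{-px}u^{(N)}$, or from the heat-kernel representation mentioned in the introduction. I would then extend the identity to the whole of $\mathcal{P}_N$ by analytic continuation in $p$. Both sides are holomorphic on the region of convergence of the transforms involved: $\mathcal{L}(n_0)$ for $\mathrm{Re}(p)>\mathcal{R}(n_0)$, $\mathcal{L}(u^{(N)}_\partial)\circ q_N$, and the rational prefactor away from its pole. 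If the relevant connected component of $\mathcal{P}_N$ contains a half-line of large real $p$, uniqueness of analytic continuation concludes. Justifying this connectedness, or else proving (i) directly on all of $\mathcal{P}_N$, is the delicate technical point.
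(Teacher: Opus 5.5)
Your proof follows the paper's route, and its computations are correct. You take the spatial Laplace transform to get $\Phi'=q_N(p)\Phi-u^{(N)}_\partial-\frac{bm_2}{2N}p\,u^{(N)}(t,0)$, then the temporal transform at $q_N(p)$. You eliminate the trace using the boundary ODE and $n_0(0)=0$; transforming that ODE directly is equivalent to the paper's Duhamel formula followed by Fubini. You then prove the identity on a subregion and extend by analytic continuation.

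Your obstacle (i) needs no new estimate. The paper uses Theorem~\ref{te:model_approximation}-\ref{te:model_approximation_first}, with $\lambda$ decreased so that $N>\lambda m_2/(2m_1)$, together with Remark~\ref{rem:mass_conservation}. This gives $\sup_t\|u^{(N)}(t,\cdot)\|_{L^1}<\infty$. That bound makes $e^{-q_N(p)T}\Phi(T)\to0$ on $\mathcal{P}'_N:=\{p\in\mathbb{C}:\ \mathrm{Re}(p)>0,\ \mathrm{Re}(q_N(p))>\max(0,\mathcal{R}(u^{(N)}_\partial),-(bm_1)^2\tfrac{2N}{bm_2})\}$.

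The step you leave open, extending to all of $\mathcal{P}_N$, is a genuine gap. The paper's Step~3 has the same gap: it applies uniqueness of analytic continuation on $\mathcal{P}_N$ without checking that $\mathcal{P}_N$ is connected. The gap cannot be closed, because \eqref{eq:link_laplace_transforms} can fail elsewhere in $\mathcal{P}_N$.

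Since $q_N(p)=q_N(-\frac{2Nm_1}{m_2}-p)$, the condition on $\mathrm{Re}(q_N(p))$ in \eqref{eq:definition_P} is invariant under reflection through the point $-\frac{Nm_1}{m_2}$. So $\mathcal{P}_N$ can acquire a second piece to the left of that point when $\mathcal{R}(n_0)$ is very negative. Take any admissible $n_0$ with $\mathcal{R}(n_0)=-\infty$, for instance a smooth mass-one bump supported in $[1,2]$. Integrating Lemma~\ref{lemm:bound_second_derivative} (with $\lambda,\lambda'$ small) twice from $+\infty$ gives $|u^{(N)}_\partial(t)|\le Ce^{-bm_1\lambda_N t}$. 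Since $q_N(p)\to+\infty$ as $p\to-\infty$, every sufficiently negative real $p$ then lies in $\mathcal{P}_N$. At such $p$ the left side is $\mathcal{L}(n_0)(p)=\int_0^\infty e^{-px}n_0(x)\dd x\ge1$. On the right side, the prefactor tends to $1$ and $|\mathcal{L}(u^{(N)}_\partial)(q_N(p))|\le C/(q_N(p)+bm_1\lambda_N)\to0$, so the identity fails there.

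What your argument and the paper's actually establish is the identity on the connected component of $\mathcal{P}_N$ containing $\mathcal{P}'_N$. This component contains $\mathcal{P}_N\cap\{\mathrm{Re}(p)>-\frac{Nm_1}{m_2}\}$. That set is connected, because on it $\mathrm{Re}(q_N(p))$ increases with $\mathrm{Re}(p)$ and decreases with $|\mathrm{Im}(p)|$. It equals $\mathcal{P}_N$ whenever $\mathcal{R}(n_0)\ge-\frac{Nm_1}{m_2}$, and it contains the real nodes $n\log(2)/x$ used by the Gaver--Stehfest estimator. That restricted statement is the one you should prove.
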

\noindent What is interesting to observe is that we have obtained a link between the Laplace transforms in the sets $\left(\mathcal{P}_N\right)_{N>0}$, but not in the whole set $\left\{p\in\mathbb{C}\,|\,\text{Re}(p) > \mathcal{R}\left(n_0\right)\right\}$ on which $\mathcal{L}\left(n_0\right)$ is defined. The latter plays an important role in the method for inverting Laplace transforms we choose to estimate $n_0$ in Section~\ref{subsubsect:estimation_gaver_stehfest_noise_free}, since methods using information of the Laplace transform on the complex plane will be more difficult to use. 



	\section{Proof of the model approximation}\label{sect:proof_model_approximation}
	This section is devoted to the proof of Theorem~\ref{te:model_approximation}. Since the proof of this theorem is inspired by the one of~\cite[Proposition $3.2$]{olaye_transport_2026}, we first present in Section~\ref{subsect:recall_old_statements} the auxiliary results obtained in this previous paper, slightly readapted to work in our setting. Then, in~Section~\ref{subsect:auxiliary_statements_derivatives}, we give  the new auxiliary statements we need, that are related to the second and third derivatives in space of the approximated model. Thereafter, we prove these auxiliary statements in Section~\ref{subsect:proof_auxiliary_statements}. Finally, in Section~\ref{subsect:proof_full_approximation}, we prove Theorem~\ref{te:model_approximation}.

	

	
	\subsection{Adaptation of old auxiliary statements}\label{subsect:recall_old_statements}
	
	The statements we present here are mainly useful for proving Theorem~\ref{te:model_approximation}-\ref{te:model_approximation_first}. The reason is that the proof of Theorem~\ref{te:model_approximation}-\ref{te:model_approximation_second} is quite short once Theorem~\ref{te:model_approximation}-\ref{te:model_approximation_first} has been obtained. In the same way as in the proof of~\cite[Proposition $3.2$-$(a)$]{olaye_transport_2026}, the first argument to obtain \hbox{Theorem~\ref{te:model_approximation}-\ref{te:model_approximation_first}} is to prove that there exists $F\in L^1\left(\mathbb{R}_+\right)$ such that $\overline{u}^{(N)} := n^{(N)} - u^{(N)}$ verifies an equation of the form 
	\begin{equation}\label{eq:PDE_to_develop_general_chaplaplace}
		\begin{cases}
			\partial_t\overline{u}^{(N)}(t,x) = bN\int_{v\in\mathbb{R}_+^d}\left[\overline{u}^{(N)}\left(t,x + \frac{v}{N}\right)- \overline{u}^{(N)}(t,x)\right] g\left(\mathrm{d}v\right) +F(t,x),& \forall t\geq0,\,x\geq0, \\ 
			\overline{u}^{(N)}(0,.) \equiv 0.
		\end{cases}
	\end{equation}
	Then, the second argument is to bound the solution of~this equation. Statements allowing us to bound equations of the form given in~\eqref{eq:PDE_to_develop_general_chaplaplace} are given in~\cite[Eq. $3.19$]{olaye_transport_2026},~\cite[Corollary~A.$2$]{olaye_transport_2026} and~\cite[Proposition~A.$4$]{olaye_transport_2026}. Specifically, these statements have been combined in~\hbox{\cite[Section~$3.3$]{olaye_transport_2026}} to prove \hbox{\cite[Lemma~$3.5$]{olaye_transport_2026}}, which is the key lemma of~\cite{olaye_transport_2026}. These statements have been written in a framework in which the approximated models are transport equations, and cannot be directly reused in this work. Hence, we present here slight adaptations of these statements, in order to apply them to our framework. 
	
	The first statement we present is the following. It provides a representation for the solutions to~\eqref{eq:PDE_to_develop_general_chaplaplace}, by writing them as the sum of the evolutions of all the source terms. It corresponds to a slightly modified version of~\cite[Eq. $3.19$]{olaye_transport_2026}, for a more general function $H_N$. Proving it requires to do the same steps as those done to obtain~\cite[Eq. $3.19$]{olaye_transport_2026}. Hence, we do not give its proof.
	\begin{lemm}[Alternative representation of a solution  to~\eqref{eq:approximation_transport_diffusion}]\label{lemm:alternative_representation}
		Let $N > 0$. Assume that there exists $F \in L^1\left(\mathbb{R}_+^2\right)$ such that~\eqref{eq:PDE_to_develop_general_chaplaplace} holds, and $C,\,\alpha,\,\beta > 0$ such that for all~\hbox{$(t,x)\in\mathbb{R}_+^2$}
		\begin{equation}\label{eq:bound_source_term}
			\left|F(t,x)\right| \leq C\exp\left(-\alpha t - \beta x\right).
		\end{equation}
		Then, denoting $\Phi_N :  L^1\left(\mathbb{R}_+\right) \mapsto C\left(\mathbb{R}_+, L^1\left(\mathbb{R}_+\right)\right)$, the operator such that for all $f_0\in L^1\left(\mathbb{R}\right)$, $\Phi_N(f_0)$ is a solution to~\eqref{eq:scaled_model_first_third_lines} with $n_0 = f_0$, we have for all~$(t,x)\in\mathbb{R}_+^2$ 
		$$
		\overline{u}^{(N)}(t,x) = \int_0^t \Phi_N\left(F(s,.)\right)\left(t-s,x\right) \dd s.
		$$
	\end{lemm}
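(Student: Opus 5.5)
This is a Duhamel formula for the linear nonlocal operator $\mathcal{A}_N f(x) := bN\int_0^\delta [f(x+v/N)-f(x)]g(v)\,\dd v$. The plan is to build the candidate $w(t,x) := \int_0^t \Phi_N(F(s,\cdot))(t-s,x)\,\dd s$, check that it solves~\eqref{eq:PDE_to_develop_general_chaplaplace}, and then conclude $w = \overline{u}^{(N)}$ by uniqueness.

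\textbf{Well-definedness.} The first step is to make sense of $\Phi_N(F(s,\cdot))$, since $F(s,\cdot)$ may change sign while Proposition~\ref{prop:uniqueness_solution} is stated for non-negative data. I would split $F(s,\cdot) = F^+(s,\cdot) - F^-(s,\cdot)$ and use linearity, defining $\Phi_N(F(s,\cdot)) := \Phi_N(F^+(s,\cdot)) - \Phi_N(F^-(s,\cdot))$. Equivalently, $\mathcal{A}_N$ is a bounded operator on $L^1(\mathbb{R}_+)$ with norm at most $2bN$ (translation to the right is a contraction on $L^1$). Hence $\Phi_N(f_0)(t) = e^{t\mathcal{A}_N}f_0$ is given by the exponential series, is linear, and satisfies $\|\Phi_N(f_0)(t)\|_{L^1} \le \|f_0\|_{L^1}$. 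This contraction bound comes from mass conservation and positivity (Remark~\ref{rem:mass_conservation}) applied separately to $F^\pm$. The bound~\eqref{eq:bound_source_term} gives $\|F(s,\cdot)\|_{L^1} \le (C/\beta)e^{-\alpha s}$, so $w(t)$ is a Bochner integral in $L^1$ and $w \in C(\mathbb{R}_+, L^1(\mathbb{R}_+))$.

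\textbf{Verifying the equation.} Differentiating under the integral gives
$$
\partial_t w(t) = \Phi_N(F(t,\cdot))(0) + \int_0^t \partial_t\Phi_N(F(s,\cdot))(t-s)\,\dd s = F(t,\cdot) + \int_0^t \mathcal{A}_N\Phi_N(F(s,\cdot))(t-s)\,\dd s .
$$
Since $\mathcal{A}_N$ is bounded, it commutes with the integral, so the last term equals $\mathcal{A}_N w(t)$. Together with $w(0) = 0$, this shows that $w$ solves~\eqref{eq:PDE_to_develop_general_chaplaplace} in the mild, or $C^1$-in-$L^1$, sense. The exponential bound~\eqref{eq:bound_source_term} is used only to make these integrals converge uniformly and to justify the differentiation; for that, continuity of $s\mapsto F(s,\cdot)$ in $L^1$ is needed, or at least that the identity holds in integrated form.

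\textbf{Uniqueness.} The difference $d := \overline{u}^{(N)} - w$ solves $\partial_t d = \mathcal{A}_N d$ with $d(0)=0$. Boundedness of $\mathcal{A}_N$ and Grönwall give $\|d(t)\|_{L^1} \le 2bN\int_0^t \|d(s)\|_{L^1}\,\dd s$, hence $d \equiv 0$. This is the same fixed-point argument as in Proposition~\ref{prop:uniqueness_solution}, now without a sign constraint, which is harmless for a linear problem.

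\textbf{Main obstacle.} The delicate point is regularity: $F$ is only assumed to lie in $L^1(\mathbb{R}_+^2)$, so time differentiability of $w$ may fail. To handle this, I would work throughout with the integrated (mild) formulation
$$
\overline{u}^{(N)}(t) = \int_0^t \bigl[\mathcal{A}_N\overline{u}^{(N)}(s) + F(s)\bigr]\,\dd s,
$$
and prove both existence and uniqueness in that framework via Fubini. The pointwise-in-$(t,x)$ identity then holds almost everywhere, and holds everywhere when $F$ is continuous, as in the application.
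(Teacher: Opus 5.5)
Your proposal is correct and follows the argument the paper has in mind: the paper gives no proof and defers to the derivation of \cite[Eq.~3.19]{olaye_transport_2026}, which is the same Duhamel superposition principle for the bounded generator $\mathcal{A}_N$ (build the candidate $w$, verify the integrated equation via Fubini, conclude by uniqueness). One detail is worth stating explicitly: the Gr\"onwall uniqueness step must be run in a class that actually contains $\overline{u}^{(N)}$, and $u^{(N)}(t,\cdot)$ is a priori only in $H^2(\mathbb{R}_+)$, not $L^1(\mathbb{R}_+)$; since $\mathcal{A}_N$ has norm at most $2bN$ on every $L^p(\mathbb{R}_+)$, your argument carries over verbatim to $C(\mathbb{R}_+,L^2(\mathbb{R}_+))$.
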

	\noindent The second statement we provide corresponds to a bound we have on the operator $\Phi_N$. It is a consequence of the two statements \cite[Corollary A.$2$]{olaye_transport_2026} and \cite[Proposition A.$4$]{olaye_transport_2026}.
	\begin{lemm}[Maximum principle]\label{lemm:maximum_principle}
		Let us consider $f_0\in L^1\left(\mathbb{R}_+\right)$. Assume that there exist four constants~\hbox{$C_1,\,C_2,\,C_3,\,C_4\geq0$} and $\omega_1,\,\omega_2 \geq 0$ such that for all $y\geq 0$
		\begin{equation}\label{eq:assumption_maximum_principle}
		\left|f_0(y)\right| \leq C_1\exp\left(-\omega_1 y\right) + C_2\exp\left(-\omega_2 y\right). 
		\end{equation}
		Then, it holds for all~$(t,x)\in\mathbb{R}_+^2$ 
		\begin{equation}\label{eq:maximum_principle}
			\begin{aligned}
				\left|\Phi_N(f_0)(t,x)\right| &\leq C_1\exp\left[-\left(bm_1\omega_1 - \frac{bm_2}{2N}\left(\omega_1\right)^2\right)t - \omega_1 x\right] \\
				&+ C_2\exp\left[-bN\left(1 - \mathcal{L}(g)\left(\frac{\omega_2}{N}\right)\right)t - \omega_2 x\right].
			\end{aligned}
		\end{equation}
	\end{lemm}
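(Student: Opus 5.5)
The plan is to prove the bound by comparison with explicit exponential supersolutions, using linearity and positivity of $\Phi_N$. The operator $\mathcal{A}_N f(x) := bN\int_0^{\delta}\left[f\left(x+\frac{v}{N}\right)-f(x)\right]g(v)\,\dd v$ is linear. The equation~\eqref{eq:scaled_model_first_third_lines} preserves non-negativity (Proposition~\ref{prop:uniqueness_solution} and the positivity statements recalled in Remark~\ref{rem:mass_conservation}). Hence $\Phi_N$ is order preserving: if $|f_0|\leq h_0$, then $|\Phi_N(f_0)|\leq \Phi_N(h_0)$. To see this, write $f_0 = f_0^+ - f_0^-$ and note that $h_0 \mp f_0 \geq 0$ give non-negative solutions. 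By linearity, it then suffices to treat separately the initial data $C_1 e^{-\omega_1 y}$ and $C_2 e^{-\omega_2 y}$.

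For each exponential initial datum $e^{-\omega y}$, the solution is explicit. Indeed, $\mathcal{A}_N e^{-\omega \cdot}(x) = -bN\left(1-\mathcal{L}(g)\left(\frac{\omega}{N}\right)\right)e^{-\omega x}$. Therefore $\Phi_N(e^{-\omega\cdot})(t,x) = \exp\left[-bN\left(1-\mathcal{L}(g)\left(\frac{\omega}{N}\right)\right)t-\omega x\right]$, which is exactly the second term of~\eqref{eq:maximum_principle}. This is the content of~\cite[Corollary A.$2$]{olaye_transport_2026}. If the exponential is not in the solution class, I would truncate or use the comparison principle of~\cite[Proposition~A.$4$]{olaye_transport_2026} directly on the supersolution.

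For the first term, I need the rate $bN(1-\mathcal{L}(g)(\omega_1/N))$ to dominate $bm_1\omega_1 - \frac{bm_2}{2N}\omega_1^2$. Since $g$ is supported on $[0,\delta]$, $e^{-s}\leq 1-s+\frac{s^2}{2}$ for $s\geq0$ gives $\mathcal{L}(g)(\omega/N)\leq 1-\frac{m_1\omega}{N}+\frac{m_2\omega^2}{2N^2}$. Hence
\[
bN\left(1-\mathcal{L}(g)\left(\tfrac{\omega}{N}\right)\right)\geq bm_1\omega-\frac{bm_2}{2N}\omega^2 .
\]
So $e^{-[bN(1-\mathcal{L}(g)(\omega_1/N))]t}\leq e^{-(bm_1\omega_1-\frac{bm_2}{2N}\omega_1^2)t}$, which yields the first term. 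Summing the two contributions gives~\eqref{eq:maximum_principle}.

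The only delicate step is the order preservation and comparison with non-integrable or merely bounded supersolutions. Here the exponentials are in $L^1$ when $\omega>0$. The case $\omega_i=0$ is trivial: the bound reduces to a constant and the rate is $0$, and it is handled by $L^\infty$ contraction of the jump semigroup, which follows since $\mathcal{A}_N$ is $bN$ times a Markov kernel minus the identity. I expect invoking~\cite[Proposition~A.$4$]{olaye_transport_2026} to cover this. Everything else is the elementary inequality above.
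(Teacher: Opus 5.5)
Your proof is correct and follows the paper's route. You compare $\Phi_N(f_0)$ with the evolution of $C_1e^{-\omega_1\cdot}+C_2e^{-\omega_2\cdot}$, note that this evolution is the explicit exponential with rate $bN\bigl(1-\mathcal{L}(g)(\omega/N)\bigr)$, and then convert the first rate using $1-e^{-s}\geq s-\frac{s^2}{2}$. One slip in the citations: in the paper, \cite[Corollary A.$2$]{olaye_transport_2026} is the comparison step and \cite[Proposition A.$4$]{olaye_transport_2026} supplies the explicit solution, the reverse of your attribution, though this does not matter since you check both facts directly.
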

	\begin{proof}
		We begin by obtaining an intermediate inequality. First, we apply~\cite[Corollary A.$2$]{olaye_transport_2026} for $v_0 = f_0$ and $w_0 = C_1\exp\left(-\omega_1 \text{Id}\right) + C_2\exp\left(-\omega_2 \text{Id}\right)$. Then, we use~\cite[Proposition A.$4$]{olaye_transport_2026} to compute the values of the function~$w$ stated in~\cite[Corollary A.$2$]{olaye_transport_2026}. We obtain that for all~$(t,x)\in\mathbb{R}_+^2$
		\begin{equation}\label{eq:proof_maximum_principle_intermediate_first}
			\begin{aligned}
				\left|\Phi_N(f_0)(t,x)\right| &\leq C_1\exp\left[-bN\left(1 - \mathcal{L}(g)\left(\frac{\omega_1}{N}\right)\right)t - \omega_1 x\right] \\
				&+  C_2\exp\left[-bN\left(1 - \mathcal{L}(g)\left(\frac{\omega_2}{N}\right)\right)t - \omega_2 x\right].
			\end{aligned}
		\end{equation}
		Then, as by the inequality $1 - e^{-x} \geq x - \frac{x^2}{2}$ for all $x\geq 0$ and the definition of $m_1$ and $m_2$ given in~\eqref{eq:definition_moments_laplace} it holds
		\begin{equation}\label{eq:proof_maximum_principle_intermediate_second}
			1 - \mathcal{L}(g)\left(\frac{\omega_1}{N}\right) = \int_0^{+\infty} \left[1 - e^{-\frac{\omega_1}{N}u}\right]g(u) \dd u \geq  \frac{m_1\omega_1}{N} -\frac{m_2\left(\omega_1\right)^2}{2N^2},
		\end{equation}
		we can conclude the proof by plugging~\eqref{eq:proof_maximum_principle_intermediate_second} in~\eqref{eq:proof_maximum_principle_intermediate_first}.
	\end{proof}
	
	\subsection{Auxiliary statements related to the derivatives of \texorpdfstring{$u^{(N)}$}{u\^\{(N)\}}}\label{subsect:auxiliary_statements_derivatives}

	In the same way as in~\cite{olaye_transport_2026}, we need exponential bounds on the derivatives of our approximated model to apply the statements presented in the previous section. These exponential estimates allow us for instance to verify~\eqref{eq:bound_source_term} and~\eqref{eq:assumption_maximum_principle} when we use Lemmas~\ref{lemm:alternative_representation} and~\ref{lemm:maximum_principle}, and are also used in the proof of Theorem~\ref{te:model_approximation}-\ref{te:model_approximation_second}, see Section~\ref{subsubsect:proof_approximation_cemetery}. In~\cite{olaye_transport_2026}, the bounds obtained are on the first and second derivatives of the approximated model. Here, we do an approximation on a higher order, so we need bounds on the second and third derivatives of $u^{(N)}$.
	
	The method used in~\cite{olaye_transport_2026} to obtain the bounds is mainly based on the method of characteristics. We cannot proceed in the same way here because~\eqref{eq:approximation_transport_diffusion} does not correspond to a transport equation. Therefore, we use a different approach based on the explicit representation of the second spatial derivative of $u^{(N)}$ through the heat kernel. This representation allows us to simplify the use of a maximum principle to bound $\partial_{x}^2 u^{(N)}$ by the eigenfunctions of the operator $bm_1\partial_x + \frac{bm_2}{2N}\partial_x^2$ on $H^2\left(\mathbb{R}_+\right)$ with Dirichlet boundary condition, which is the operator associated to the equation verified by $\partial_{x}^2 u^{(N)}$. It also allows us to use classical heat kernel estimates. We first present statements that provide the expression of $\partial_{x}^2 u^{(N)}$ for all~$N>0$. Then, we present statements providing exponential bounds on the derivatives of~$u^{(N)}$.
	
	\paragraph{Expression of the second derivative.}  We denote for all $N>0$ the function $v^{(N)} := \partial_{x}^2u^{(N)}$. To find an explicit representation of this function, we need to find the partial differential equation verified by this function. To obtain it, first notice that the first and second lines of~\eqref{eq:approximation_transport_diffusion} imply that $\frac{bm_2}{2N}\partial_{x}^2u^{(N)}(t,0) = 0$. Then, differentiating two times in the first and last lines of~\eqref{eq:approximation_transport_diffusion} yields that~$v^{(N)}$ is a solution of the following equation
	\begin{equation}\label{eq:PDE_second_derivative_advection_diffusion}
		\begin{cases}
			\partial_t v^{(N)}(t,x) = bm_1\partial_{x} v^{(N)}(t,x) + \frac{bm_2}{2N}\partial_{x}^2 v^{(N)}(t,x), & \forall t\geq0,\,x\geq0,\\
			v^{(N)}(t,0) = 0, & \forall t\geq0,\\
			v^{(N)}(0,x) = n''_0(x), & \forall x\geq0.
		\end{cases}
	\end{equation}
	This equation has a unique solution in $C^0\left(\mathbb{R}_+,L^2\left(\mathbb{R}_+\right)\right)$ by the following proposition, proved in Appendix~\ref{appendix:proof_uniqueness_dirichlet}. We also refer to~\cite{cornilleau_controllability_2012,doumic_asymptotic_2026,gueye_singular_2016} for the proofs of the well-posedness of very similar~systems. 
	\begin{prop}[Existence and uniqueness of~\eqref{eq:PDE_second_derivative_advection_diffusion}]\label{prop:uniqueness_dirichlet}
	Assume that $n''_0\in L^2\left(\mathbb{R}_+\right)$. Then, there exists a unique mild solution to~\eqref{eq:PDE_second_derivative_advection_diffusion} in the space $C^0\left(\mathbb{R}_+,L^2\left(\mathbb{R}_+\right)\right)$. In addition, if \hbox{$n''_0\in H_0^1\left(\mathbb{R}_+\right)\cap H^2\left(\mathbb{R}_+\right)$}, then there exists a unique strict solution to~\eqref{eq:PDE_second_derivative_advection_diffusion} in the space 
		$$
		C^0\left(\mathbb{R}_+,H_0^1\left(\mathbb{R}_+\right)\cap H^2\left(\mathbb{R}_+\right)\right)\cap C^1\left(\mathbb{R}_+,L^2\left(\mathbb{R}_+\right)\right).
		$$
	\end{prop}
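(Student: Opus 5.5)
The plan is to recast~\eqref{eq:PDE_second_derivative_advection_diffusion} as an abstract Cauchy problem $\dot v = Av$, $v(0)=n''_0$ on $H := L^2(\mathbb{R}_+)$, where
$$
A := \frac{bm_2}{2N}\partial_x^2 + bm_1\partial_x, \qquad D(A) := H_0^1(\mathbb{R}_+)\cap H^2(\mathbb{R}_+),
$$
and to show that $A$ generates a $C_0$-semigroup of contractions (up to a shift). Once this is done, the mild solution $v(t) = e^{tA}n''_0$ exists and is unique in $C^0(\mathbb{R}_+,L^2(\mathbb{R}_+))$. For $n''_0\in D(A)$, the classical Pazy theory gives a unique strict solution in $C^0(\mathbb{R}_+,D(A))\cap C^1(\mathbb{R}_+,L^2(\mathbb{R}_+))$, with $D(A)$ carrying the graph norm, which is equivalent to the $H^2$ norm.

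To obtain the generator property, I will use the Lumer--Phillips theorem.

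\emph{Dissipativity.} For $v\in D(A)$ real-valued, integrate by parts using $v(0)=0$ and the decay at infinity of $H^1$ functions. The advection term gives $\int_0^\infty v\,\partial_x v\,\dd x = -\tfrac12 v(0)^2 = 0$, and the diffusion term gives $\int_0^\infty v\,\partial_x^2 v\,\dd x = -\int_0^\infty(\partial_x v)^2\,\dd x$. Hence $\langle Av,v\rangle = -\frac{bm_2}{2N}\|\partial_x v\|_2^2 \le 0$, and $A$ is dissipative. The complex case is identical, taking real parts. Density of $D(A)$ in $L^2$ is clear, since it contains $C_c^\infty(0,\infty)$.

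\emph{Range condition.} This is the main step: I must show that $\mu - A$ is surjective for some $\mu>0$. Given $f\in L^2(\mathbb{R}_+)$, I will solve the ODE $\mu v - bm_1 v' - \frac{bm_2}{2N}v'' = f$ with $v(0)=0$ and $v\in H^2$. The characteristic roots $r_\pm$ of $\frac{bm_2}{2N}r^2 + bm_1 r - \mu = 0$ are real with $r_-<0<r_+$. The solution can then be written via a Green's function: take the whole-line resolvent kernel applied to $f$ extended by zero, which is a convolution with an $L^1$ exponential kernel and therefore lies in $H^2(\mathbb{R})$ by Young's inequality and the equation. Then subtract $c\,e^{r_- x}$, which is in $H^2(\mathbb{R}_+)$, choosing $c$ to enforce $v(0)=0$. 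This gives $v\in D(A)$ with $(\mu-A)v=f$. Alternatively, Lax--Milgram on $H_0^1$ works: the bilinear form $\mu\langle v,w\rangle + \frac{bm_2}{2N}\langle v',w'\rangle - bm_1\langle v',w\rangle$ is coercive because its antisymmetric part vanishes on the diagonal. $H^2$ regularity then follows from the equation itself, since $v'' = \frac{2N}{bm_2}(\mu v - bm_1 v' - f)\in L^2$.

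With dissipativity, density and the range condition, Lumer--Phillips yields a contraction semigroup. Uniqueness of mild solutions is then immediate, since the mild solution is by definition given by the variation-of-constants formula with zero source.
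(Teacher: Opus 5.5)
Your proposal is correct and follows essentially the same route as the paper: Lumer--Phillips for $A = bm_1\partial_x + \frac{bm_2}{2N}\partial_x^2$ on $H_0^1(\mathbb{R}_+)\cap H^2(\mathbb{R}_+)$, dissipativity by integration by parts using $v(0)=0$, and maximality via Lax--Milgram for the coercive form on $H_0^1(\mathbb{R}_+)$ with $H^2$ regularity read off from the equation, which is exactly your ``alternative'' argument. Your explicit Green's-function construction of the resolvent is an equally valid way to obtain the range condition. Your remark that the graph norm of $A$ is equivalent to the $H^2$ norm makes explicit a point the paper leaves implicit when it states the regularity of the strict solution.
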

	\noindent Therefore, we only have to find an explicit solution to~\eqref{eq:PDE_second_derivative_advection_diffusion}, and we will have an explicit representation for $v^{(N)}$. The following proposition provides this explicit solution, and thus the expression of $v^{(N)}$. It is proved in Section~\ref{subsubsect:proof_second_derivative_explicit}.

	
	\begin{prop}[Explicit solution to~\eqref{eq:PDE_second_derivative_advection_diffusion}]\label{prop:second_derivative_explicit}
		Assume that $n''_0\in L^2\left(\mathbb{R}_+\right)$. We denote for all $N>0$, $(t,x)\in\mathbb{R}_+^*\times\mathbb{R}_+$,
		\begin{equation}\label{eq:definition_fundamental_advection_diffusion}
			\Psi_{N}(t,x) := \left(\frac{N}{bm_2}\frac{1}{2\pi t}\right)^{\frac{1}{2}}\exp\left[-\frac{N}{bm_2}\frac{\left(x+bm_1 t\right)^2}{2t}\right]. 
		\end{equation}
		Then, for all $N>0$, $(t,x)\in\mathbb{R}_+^2$, we have
		$$
		\begin{aligned}
			v^{(N)}(t,x) = \begin{cases}
				n_0''(x), & \text{ if }t = 0,\\
				\left(n_0''*\Psi_{N}(t,.)\right)(x) - \exp\left[-\frac{2Nm_1}{m_2}x\right]\left(n_0''*\Psi_{N}(t,.)\right)(-x), & \text{ otherwise,}
			\end{cases} 
		\end{aligned}
		$$
		where for all $(f,g) \in L^1\left(\mathbb{R}_+\right)\times L^1\left(\mathbb{R}\right)$, $z\in\mathbb{R}$, we write 
		\begin{equation}\label{eq:definition_convolution}
		\left(f*g\right)(z) := \int_{0}^{+\infty} f(y)g(z-y) \dd y = \int_{-\infty}^{z} f(z-y')g(y') \dd y'.
		\end{equation}
	\end{prop}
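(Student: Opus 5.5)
The plan is to verify directly that the proposed formula solves~\eqref{eq:PDE_second_derivative_advection_diffusion}, using the method of images adapted to the drift. Then we identify it with the unique mild solution given by Proposition~\ref{prop:uniqueness_dirichlet}.

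\emph{Step 1: the free-space solution.} Write $D := \frac{bm_2}{2N}$, $c := bm_1$ and $k := \frac{c}{D} = \frac{2Nm_1}{m_2}$. Then $\Psi_N(t,x) = (4\pi D t)^{-1/2}\exp\left[-\frac{(x+ct)^2}{4Dt}\right]$. This is the fundamental solution on $\mathbb{R}$ of $\partial_t \Psi = c\partial_x\Psi + D\partial_x^2\Psi$: it is a Gaussian heat kernel translated by $-ct$, which one checks by direct differentiation. Extend $n_0''$ by zero on $\mathbb{R}_-$ and set $w(t,z) := (n_0''*\Psi_N(t,.))(z)$ for $z\in\mathbb{R}$. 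Differentiation under the integral sign is justified for $t>0$ by the Gaussian decay of $\Psi_N$ and its derivatives, together with $n_0''\in L^2$. Hence $w$ is smooth on $\mathbb{R}_+^*\times\mathbb{R}$ and solves the same equation there.

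\emph{Step 2: the image term.} Set $z(t,x) := e^{-kx}w(t,-x)$ and $h(x) := w(t,-x)$. A short computation gives $\partial_x z = e^{-kx}(-kh + h')$ and $\partial_x^2 z = e^{-kx}(k^2h - 2kh' + h'')$. Therefore
$$
c\,\partial_x z + D\,\partial_x^2 z = e^{-kx}\left[(Dk^2 - ck)h - c h' + D h''\right] = e^{-kx}\left[c\,\partial_x w + D\,\partial_x^2 w\right](t,-x) = \partial_t z,
$$
since $Dk^2 = ck$ and $h' = -\partial_x w(t,-x)$. So $v := w - z$ solves the PDE on $\mathbb{R}_+^*\times\mathbb{R}_+$, and $v(t,0) = w(t,0) - w(t,0) = 0$, which is the Dirichlet condition.

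\emph{Step 3: initial condition and identification with the mild solution.} This is the main obstacle: the formula must be shown to belong to $C^0(\mathbb{R}_+,L^2(\mathbb{R}_+))$ and to coincide with the semigroup solution. For continuity at $t=0$, $\|\Psi_N(t,.)\|_{L^1(\mathbb{R})}=1$, and $\Psi_N(t,.)$ is an approximate identity concentrating at $0$ as $t\to0$ (its centre $-ct$ tends to $0$). Hence $w(t,.)\to n_0''\mathbb{1}_{\mathbb{R}_+}$ in $L^2(\mathbb{R})$. For $x\geq0$ we have $|z(t,x)|\leq|w(t,-x)|$, and $w(t,-.)|_{\mathbb{R}_+}$ tends in $L^2$ to the zero extension evaluated at $-x$, which is $0$. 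Thus $v(t,.)\to n_0''$ in $L^2(\mathbb{R}_+)$. The same Young inequality gives $\|v(t,.)\|_{L^2(\mathbb{R}_+)}\leq 2\|n_0''\|_{L^2}$ uniformly in $t$, so the map $n_0''\mapsto v$ is a bounded linear map $L^2(\mathbb{R}_+)\to C^0([0,T],L^2(\mathbb{R}_+))$.

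To conclude, first take $n_0''\in C_c^\infty(\mathbb{R}_+^*)$. The formula is then a classical solution, smooth up to $t=0$ and with Gaussian decay, and it lies in $C^0(\mathbb{R}_+,H_0^1\cap H^2)\cap C^1(\mathbb{R}_+,L^2)$. By the strict-solution part of Proposition~\ref{prop:uniqueness_dirichlet}, it is the semigroup solution. Both the semigroup and the explicit formula depend continuously on the data in $L^2$, and $C_c^\infty(\mathbb{R}_+^*)$ is dense in $L^2(\mathbb{R}_+)$. A density argument therefore shows that they agree for every $n_0''\in L^2(\mathbb{R}_+)$. By uniqueness of the mild solution, this gives $v^{(N)} = v$, which is the claimed representation.
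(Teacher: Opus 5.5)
Your proposal is correct and takes essentially the same route as the paper. Both verify by direct differentiation that the exponentially weighted image formula solves the advection--diffusion equation, using that $\Psi_N$ is a translated heat kernel, then prove $L^2$-continuity at $t=0$ through the approximate-identity property, and conclude by the uniqueness in Proposition~\ref{prop:uniqueness_dirichlet}. Your explicit check of the Dirichlet condition and your identification step are more careful than the paper, which omits the boundary check and simply asserts that pointwise verification for $t>0$ plus continuity at $t=0$ gives a mild solution; your identification uses strict solutions for $C_c^\infty(\mathbb{R}_+^*)$ data, then density via the uniform bound $\|v(t,.)\|_{L^2(\mathbb{R}_+)}\leq 2\|n_0''\|_{L^2}$ and contractivity of the semigroup.
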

	\begin{rem}
	As for all $(f,g) \in L^1\left(\mathbb{R}_+\right)\times L^1\left(\mathbb{R}\right)$ we have
	$$
	\int_{z\in\mathbb{R}}\left|\left(f*g\right)(z)\right| \dd z \leq  \int_{y\in\mathbb{R}_+}\int_{z\in\mathbb{R}} \left|f(y)\right|\left|g(z-y)\right|  \dd z\dd y  = \left|\left|f\right|\right|_{L^1\left(\mathbb{R}_+\right)}\left|\left|g\right|\right|_{L^1\left(\mathbb{R}\right)} < +\infty,
	$$
	the convolution product $\left(f*g\right)(z)$ is finite for almost all $z\in\mathbb{R}$, and there is no problem of divergence of the integral in~\eqref{eq:definition_convolution}.
	\end{rem}
	
	\paragraph{Bounds on the derivatives.} Now that we have the explicit representation of $v^{(N)}$, we can obtain the bounds on the second and third derivatives of $u^{(N)}$. The first statement we present provides bounds on the second derivative. Its proof, given in Section~\ref{subsubsect:proof_bound_second_derivative}, is based on the fact that the operator $bm_1 \partial_x + \frac{bm_2}{2N}\partial_{x}^2$ on $H^2\left(\mathbb{R}_+\right)$ with Dirichlet boundary condition has explicit eigenfunctions. 
	\begin{lemm}[Bound on the second derivative]\label{lemm:bound_second_derivative}
		Assume that~\eqref{eq:assumptions_main_result_chaplaplace} holds. Then, we have for all $N > \frac{m_2}{2m_1}\left(2\lambda + \lambda'\right)$ and $(t,x)\in\mathbb{R}_+^2$
		\begin{equation}\label{eq:bound_second_derivative}
			\left|\partial_{x}^2 u^{(N)}(t,x)\right| \leq D_{\lambda}\exp\left[-bm_1\lambda_Nt\right]\left[\exp\left(- \lambda x\right)- \exp\left(-\frac{2Nm_1}{m_2}\frac{\lambda_N}{\lambda}x\right)\right].
		\end{equation}
	\end{lemm}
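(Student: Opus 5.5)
The plan is to compare $v^{(N)}=\partial_x^2u^{(N)}$, which solves~\eqref{eq:PDE_second_derivative_advection_diffusion}, with an explicit barrier built from exponential eigenfunctions of $\mathcal{A}_N:=bm_1\partial_x+\frac{bm_2}{2N}\partial_x^2$ with Dirichlet boundary condition. The comparison is carried out through the explicit representation of Proposition~\ref{prop:second_derivative_explicit}, whose kernel I will show is non-negative.

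\textbf{Step 1: the barrier is an exact solution.} Set $\kappa:=\frac{2Nm_1}{m_2}$ and $\mu:=\frac{2Nm_1}{m_2}\frac{\lambda_N}{\lambda}$. From the definition of $\lambda_N$ in~\eqref{eq:approximation_eigenvalues_chaplaplace},
\[
\mu=\kappa-\lambda .
\]
A direct computation gives, for every $\omega$,
\[
\mathcal{A}_N e^{-\omega x}=\tfrac{bm_2}{2N}\,\omega(\omega-\kappa)\,e^{-\omega x}.
\]
For $\omega=\lambda$ and for $\omega=\mu=\kappa-\lambda$ the coefficient is the same, namely
\[
-\tfrac{bm_2}{2N}\lambda\mu=-bm_1\lambda+\tfrac{bm_2}{2N}\lambda^2=-bm_1\lambda_N .
\]
Define
\[
w(t,x):=D_\lambda e^{-bm_1\lambda_N t}\left(e^{-\lambda x}-e^{-\mu x}\right).
\]
Then $w$ solves the equation in~\eqref{eq:PDE_second_derivative_advection_diffusion} and satisfies $w(t,0)=0$. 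Its initial datum lies in $H_0^1(\mathbb{R}_+)\cap H^2(\mathbb{R}_+)$ as soon as $\mu>0$.

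\textbf{Step 2: ordering of the initial data.} The hypothesis $N>\frac{m_2}{2m_1}(2\lambda+\lambda')$ is equivalent to $\mu>\lambda+\lambda'$. Combined with~\eqref{eq:assumptions_main_result_chaplaplace}, this gives
\[
|n_0''(x)|\le D_\lambda\left(e^{-\lambda x}-e^{-(\lambda+\lambda')x}\right)\le D_\lambda\left(e^{-\lambda x}-e^{-\mu x}\right)=w(0,x).
\]
Hence $w(0,\cdot)\pm n_0''\ge0$.

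\textbf{Step 3: positivity of the solution operator.} By Proposition~\ref{prop:second_derivative_explicit} and the uniqueness in Proposition~\ref{prop:uniqueness_dirichlet}, the solution of~\eqref{eq:PDE_second_derivative_advection_diffusion} with any $L^2$ datum $f$ is
\[
\int_0^\infty f(y)K_t(x,y)\,\dd y,\qquad K_t(x,y):=\Psi_N(t,x-y)-e^{-\kappa x}\Psi_N(t,-x-y).
\]
Write $a:=bm_1t-y$ and use $(a+x)^2-(a-x)^2=4ax$. This yields
\[
\frac{\Psi_N(t,x-y)}{\Psi_N(t,-x-y)}=\exp\left(-\kappa x+\frac{2Nxy}{bm_2t}\right)\ge e^{-\kappa x}\qquad\text{for }x,y\ge0,
\]
so $K_t\ge0$.

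\textbf{Step 4: conclusion.} By uniqueness and linearity, $w\pm v^{(N)}$ is the solution of~\eqref{eq:PDE_second_derivative_advection_diffusion} with the non-negative datum $w(0,\cdot)\pm n_0''$. Since $K_t\ge0$, both $w\pm v^{(N)}$ are non-negative, which gives $|v^{(N)}|\le w$. This is exactly~\eqref{eq:bound_second_derivative}.

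The main obstacle is the identification in Step 4. It requires that $w$ coincides with the mild solution given by the representation formula, which uses uniqueness in $C^0(\mathbb{R}_+,L^2(\mathbb{R}_+))$ together with $w$ being a strict solution. It also requires the pointwise (rather than a.e.) validity of the inequality; continuity of both sides for $t>0$ handles the latter.
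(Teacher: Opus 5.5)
Your proposal is correct and follows essentially the same route as the paper: the same eigenfunction barrier $D_\lambda e^{-bm_1\lambda_N t}\bigl(e^{-\lambda x}-e^{-\frac{2Nm_1}{m_2}\frac{\lambda_N}{\lambda}x}\bigr)$, the same ordering of the initial data via $\frac{2Nm_1}{m_2}\frac{\lambda_N}{\lambda}>\lambda+\lambda'$, and non-negativity of the reflected kernel from Proposition~\ref{prop:second_derivative_explicit} (your ratio computation is the paper's identity $(-x-y+bm_1t)^2+4bm_1tx=(x-y+bm_1t)^2+4xy$ in another form). Handling both $w\pm v^{(N)}$ explicitly is slightly more complete than the paper, which writes out only $E^{(N)}-v^{(N)}\ge0$ and leaves the symmetric lower bound needed for the absolute value implicit.
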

	\noindent The second statement we present provides bounds for the third derivative. Its proof is presented in Section~\ref{subsubsect:proof_bound_third_derivative}, and relies on classical heat kernel estimates.
	\begin{lemm}[Bound on the third derivative]\label{lemm:bound_third_derivative}
		Assume that~\eqref{eq:assumptions_main_result_chaplaplace} holds. Then,  we have for all~$N > 0$ and $(t,x)\in\mathbb{R}_+^2$
		\begin{equation}\label{eq:bound_third_derivative}
		\left|\partial_{x}^3 u^{(N)}(t,x)\right| \leq C_{\lambda}\exp\left[-bm_1\lambda_N t -\lambda x\right] +  \left(C_{\lambda}+D_{\lambda}\frac{2Nm_1}{m_2}\right)\exp\left[- bm_1\lambda_N t -\frac{2Nm_1}{m_2}\frac{\lambda_N}{\lambda} x\right].
		\end{equation}
	\end{lemm}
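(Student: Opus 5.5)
Since $\partial_x^3 u^{(N)} = \partial_x v^{(N)}$, I will differentiate the explicit formula of Proposition~\ref{prop:second_derivative_explicit} in $x$. For $t>0$, write $w(t,z) := (n_0''*\Psi_N(t,\cdot))(z) = \int_{-\infty}^{z} n_0''(z-y')\Psi_N(t,y')\,\dd y'$. The second form puts the dependence on $z$ in $n_0''$ and in the upper limit. Differentiating it gives
\begin{equation*}
\partial_z w(t,z) = (n_0'''*\Psi_N(t,\cdot))(z) + n_0''(0)\Psi_N(t,z),
\end{equation*}
and by \eqref{eq:assumptions_main_result_chaplaplace} we have $n_0''(0)=0$, so $\partial_z w = n_0'''*\Psi_N$. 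Hence
\begin{equation*}
\partial_x^3 u^{(N)}(t,x) = (n_0'''*\Psi_N)(x) + \frac{2Nm_1}{m_2}e^{-\frac{2Nm_1}{m_2}x}(n_0''*\Psi_N)(-x) + e^{-\frac{2Nm_1}{m_2}x}(n_0'''*\Psi_N)(-x).
\end{equation*}
The three terms correspond exactly to the three contributions in \eqref{eq:bound_third_derivative}: the $C_\lambda$ term with decay $e^{-\lambda x}$, and the $C_\lambda$ and $D_\lambda\frac{2Nm_1}{m_2}$ terms with the fast spatial decay.

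The core computation is a Gaussian bound. For $|f(y)|\le C e^{-\lambda y}$ on $\mathbb{R}_+$ and any $z\in\mathbb{R}$, extend the integral over $y\ge0$ to all of $\mathbb{R}$ and complete the square:
\begin{equation*}
\int_0^\infty e^{-\lambda y}\Psi_N(t,z-y)\,\dd y \le e^{-\lambda z}\int_{\mathbb{R}} e^{\lambda s}\Psi_N(t,s)\,\dd s = e^{-\lambda z}\exp\Big[-bm_1\lambda t + \tfrac{bm_2\lambda^2}{2N}t\Big] = e^{-\lambda z - bm_1\lambda_N t}.
\end{equation*}
This uses the moment generating function of a Gaussian with mean $-bm_1t$ and variance $\frac{bm_2}{N}t$, together with the definition of $\lambda_N$ in \eqref{eq:approximation_eigenvalues_chaplaplace}. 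Applied at $z=x$, it bounds the first term by $C_\lambda e^{-bm_1\lambda_N t-\lambda x}$.

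For the reflected terms, the same bound at $z=-x$ gives $e^{+\lambda x}$, which multiplied by $e^{-\frac{2Nm_1}{m_2}x}$ yields $e^{-(\frac{2Nm_1}{m_2}-\lambda)x}$. The factor $\lambda_N$ is chosen so that $\frac{2Nm_1}{m_2}\frac{\lambda_N}{\lambda} = \frac{2Nm_1}{m_2}-\lambda$, so this is precisely the spatial decay in \eqref{eq:bound_third_derivative}. The second term uses $|n_0''|\le D_\lambda e^{-\lambda y}$, obtained by dropping the factor $(1-e^{-\lambda' y})\le 1$. The third term uses $C_\lambda$. This yields the claim for $t>0$.

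For $t=0$, the bound reduces to $|n_0'''(x)|\le C_\lambda e^{-\lambda x}$ plus a nonnegative term, which holds by assumption; alternatively it follows by continuity. No condition on $N$ is needed, since the Gaussian identity is exact for every $N>0$. This is consistent with the statement holding for all $N>0$; when $\lambda_N<0$ the bound is still valid, merely weak.

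The main obstacle is rigor in the differentiation step. I need to justify differentiating under the integral, which I would do by dominated convergence using the exponential bounds on $n_0''$ and $n_0'''$ and the Gaussian decay of $\Psi_N$. I also need to identify $\partial_x v^{(N)}$ with $\partial_x^3 u^{(N)}$ pointwise, using $n_0\in H^3$ and the regularity from Proposition~\ref{prop:uniqueness_dirichlet}. Finally, the boundary term $n_0''(0)\Psi_N(t,z)$ must vanish: it does, because the assumed bound forces $n_0''(0)=0$, and this is exactly why the factor $(1-e^{-\lambda' x})$ appears in \eqref{eq:assumptions_main_result_chaplaplace}.
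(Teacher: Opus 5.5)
Your proposal is correct and follows essentially the same route as the paper. You differentiate the reflected-Gaussian formula for $v^{(N)}$ from Proposition~\ref{prop:second_derivative_explicit}, using $n_0''(0)=0$ (the paper phrases this as $(f*h)'=f'*h$ when $f(0)=0$), to obtain the same three-term expression; you then bound each convolution by extending the integral to $\mathbb{R}$ and completing the square, combine via $\frac{2Nm_1}{m_2}-\lambda=\frac{2Nm_1}{m_2}\frac{\lambda_N}{\lambda}$, and treat $t=0$ directly from the assumption on $n_0'''$, exactly as the paper does.
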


	\subsection{Proof of the auxiliary statements}\label{subsect:proof_auxiliary_statements}
	We provide here the proofs of the statements presented in Section~\ref{subsect:auxiliary_statements_derivatives}, except the one of Proposition~\ref{prop:uniqueness_dirichlet} which is given in Appendix~\ref{appendix:proof_uniqueness_dirichlet}. In all this section, to simplify notations, we~write 
	\begin{equation}\label{eq:mean_sigma_dft}
		\mu := bm_1,\hspace{5.1mm} \text{and}\hspace{5.1mm} \sigma_N^2 := \frac{bm_2}{N}.
	\end{equation}
	In particular, we have by~\eqref{eq:approximation_eigenvalues_chaplaplace} that
	\begin{equation}\label{eq:approx_eigen_with_mu_sigma}
		\lambda_N = \lambda\left(1 - \frac{\lambda\sigma_N^2}{2\mu}\right).
	\end{equation}
	

	\subsubsection{Proof of Proposition~\ref{prop:second_derivative_explicit}}\label{subsubsect:proof_second_derivative_explicit}
	
	Let us fix $N > 0$. We consider for all $(t,x)\in\mathbb{R}_+^2$ 	
	\begin{equation}\label{eq:_proof_prop_second_derivative_explicit_intermediate_first}
		\tilde{v}^{(N)}(t,x) =  \begin{cases}
			n_0''(x), & \text{ if }t = 0,\\
			\left(n_0''*\Psi_{N}(t,.)\right)(x) - \exp\left[-\frac{2\mu}{\sigma_N^2}x\right]\left(n_0''*\Psi_{N}(t,.)\right)(-x), & \text{ otherwise.}
		\end{cases}
	\end{equation}	
	In view of~Proposition~\ref{prop:uniqueness_dirichlet}, our aim is to prove that $\tilde{v}^{(N)}$ is a mild solution of~\eqref{eq:PDE_second_derivative_advection_diffusion} in the space~$C^0\left(\mathbb{R}_+,L^2\left(\mathbb{R}_+\right)\right)$. To do so, first notice that as for any $(f,h)\in L^1\left(\mathbb{R}_+\right)\times W^{1,1}\left(\mathbb{R}\right)$ it holds $(f*h)' = f*h'$, we have for all $(t,x)\in\mathbb{R}_+^*\times\mathbb{R}_+$ that
	$$
	\begin{aligned}
		\partial_x \tilde{v}^{(N)}(t,x) &=   \left(n_0''*\partial_x\Psi_{N}(t,.)\right)(x) + \exp\left[-\frac{2\mu}{\sigma_N^2}x\right]\bigg[ \left(n_0''*\partial_x\Psi_{N}(t,.)\right)(-x)  \\ 
		&\hspace{90mm}+\frac{2\mu}{\sigma_N^2}\left(n_0''*\Psi_{N}(t,.)\right)(-x)\bigg],\\
		\partial_{x}^2 \tilde{v}^{(N)}(t,x) &= \left(n_0''*\partial_{x}^2\Psi_{N}(t,.)\right)(x) - \exp\left[-\frac{2\mu}{\sigma_N^2}x\right]\bigg[ \left(n_0''*\partial_{x}^2\Psi_{N}(t,.)\right)(-x)\\ 
		&\hspace{34.85mm} +\frac{4\mu}{\sigma_N^2}\left(n_0''*\partial_x\Psi_{N}(t,.)\right)(-x)  +\left(\frac{2\mu}{\sigma_N^2}\right)^2\left(n_0''*\Psi_{N}(t,.)\right)(-x)\bigg] .
	\end{aligned}
	$$
	Then, by doing a linear combination of the two above terms, we obtain for all $(t,x)\in\mathbb{R}_+^*\times\mathbb{R}_+$ 
	\begin{equation}\label{eq:_proof_prop_second_derivative_explicit_intermediate_second}
		\begin{aligned}
			\mu \partial_x \tilde{v}^{(N)}(t,x) + \frac{\sigma_N^2}{2} \partial_{x}^2 \tilde{v}^{(N)}(t,x) &= \left(n_0''*\left[\mu \partial_x\Psi_N(t,.) + \frac{\sigma_N^2}{2}\partial_{x}^2\Psi_N(t,.)\right]\right)(x) \\ 
			&- \exp\left[-\frac{2\mu}{\sigma_N^2}x\right]\left(n_0''*\left[\mu \partial_x\Psi_N(t,.) + \frac{\sigma_N^2}{2}\partial_{x}^2\Psi_N(t,.)\right]\right)(-x).
		\end{aligned}
	\end{equation}
	On the other hand, we have by~\eqref{eq:_proof_prop_second_derivative_explicit_intermediate_first} that the following holds, for all $(t,x)\in\mathbb{R}_+^*\times\mathbb{R}_+$, 
	\begin{equation}\label{eq:_proof_prop_second_derivative_explicit_intermediate_third}
		\partial_t \tilde{v}^{(N)}(t,x) = \left(n_0''*\partial_t\Psi_N(t,.)\right)(x) - \exp\left[-\frac{2\mu}{\sigma_N^2}x\right]\left(n_0''*\partial_t\Psi_N(t,.)\right)(-x).
	\end{equation}
	We also have since $\varphi(s,y)\mapsto \frac{1}{\sqrt{4\pi s}}\exp\left(-\frac{y^2}{4s}\right)$ is the fundamental solution to the heat equation, and since $\Psi_N(s,y) = \varphi\left(\frac{\sigma^2_N}{2} s,y+\mu s\right)$ for all $(s,y)\in\mathbb{R}_+^*\times\mathbb{R}$ (see~\eqref{eq:definition_fundamental_advection_diffusion} and~\eqref{eq:mean_sigma_dft}), that~$\Psi_N$ verifies the following, for all \hbox{$(t,x)\in\mathbb{R}_+^*\times\mathbb{R}$,} 
	\begin{equation}\label{eq:_proof_prop_second_derivative_explicit_intermediate_fourth}
		\partial_t \Psi_{N}(t,x) = \mu\partial_x \Psi_{N}(t,x) + \frac{\sigma_N^2}{2}\partial_{x}^2 \Psi_{N}(t,x).
	\end{equation}
	Then, by plugging~\eqref{eq:_proof_prop_second_derivative_explicit_intermediate_fourth} in~\eqref{eq:_proof_prop_second_derivative_explicit_intermediate_third}, we obtain that the left-hand sides of~\eqref{eq:_proof_prop_second_derivative_explicit_intermediate_second} and~\eqref{eq:_proof_prop_second_derivative_explicit_intermediate_third} are equal, so that $\tilde{v}^{(N)}(t,.)$ verifies the first line of~\eqref{eq:PDE_second_derivative_advection_diffusion} when $t>0$. It thus only remains to prove that~$v^{(N)}(t,.)$ is continuous at $t= 0$ with respect to the topology of $L^2\left(\mathbb{R}_+\right)$, and we will have that $v^{(N)}$ is a mild solution to~\eqref{eq:PDE_second_derivative_advection_diffusion} in~$C^0\left(\mathbb{R}_+,L^2\left(\mathbb{R}_+\right)\right)$. To do so, notice that as $\Psi_N(t,x) = \varphi\left(\frac{\sigma^2_N}{2} t,x+\mu t\right)$ for all \hbox{$(t,x)\in\mathbb{R}_+\times\mathbb{R}$}, and as $\varphi$ is a classical mollifier kernel on $\mathbb{R}$, we have  $\left(h*\Psi_N(t,.)\right)(y) \underset{t\rightarrow0^+}{\rightarrow} h(y)$ for all~$h\in \mathcal{C}_c\left(\mathbb{R}\right)$,~$y\in\mathbb{R}$. This yields that for all~$f\in\mathcal{C}_c\left(\mathbb{R}_+\right)$,~$x\geq0$, it holds (we take $h = f1_{\mathbb{R}_+}$)
	$$
	\lim_{t\rightarrow 0^+}\left[\left(f*\Psi_{N}(t,.)\right)(x) - \exp\left[-\frac{2\mu}{\sigma_N^2}x\right]\left(f*\Psi_{N}(t,.)\right)(-x)\right] = f(x).
	$$
	Thanks to the dominated convergence theorem, one can obtain that the above convergence result also holds in the norm $||.||_{L^2\left(\mathbb{R}_+\right)}$. Then, as the set $\mathcal{C}_c\left(\mathbb{R}_+\right)$ is dense in the space $L^2\left(\mathbb{R}_+\right)$, we obtain from this last result and a density argument that $v^{(N)}(t,.)$ is continuous at $t=0$, so that $v^{(N)}$ is a mild solution to~\eqref{eq:PDE_second_derivative_advection_diffusion} in~$C^0\left(\mathbb{R}_+,L^2\left(\mathbb{R}_+\right)\right)$.  \qed 
	

	\subsubsection{Proof of Lemma~\ref{lemm:bound_second_derivative}}\label{subsubsect:proof_bound_second_derivative}
	
	Let us fix $N > \frac{m_2}{2m_1}\left(2\lambda + \lambda'\right)$. We consider the functions~$E^{(N)}$ and $\overline{v}^{(N)}$, defined such that for all $(t,x)\in\mathbb{R}_+^2$
	$$
	\begin{aligned}
		E^{(N)}(t,x) &= D_{\lambda}\exp\left[-\mu\lambda_Nt\right]\left[\exp\left(- \lambda x\right)- \exp\left(-\frac{2\mu}{\sigma_N^2}\frac{\lambda_N}{\lambda}x\right)\right], \\
		\overline{v}^{(N)}(t,x) &=  E^{(N)}(t,x) - v^{(N)}(t,x).
	\end{aligned}
	$$
	Our aim is to prove that~$\overline{v}^{(N)}$ is non-negative. Then,~\eqref{eq:bound_second_derivative} will directly come from this last property.
	
	It is well-known that the solutions to the differential equation~\hbox{$\frac{\sigma_N^2}{2}y'' + \mu y' + \mu\lambda_N y = 0$} are the functions of the form
	\begin{equation}\label{eq:proof_bound_second_derivative_intermediate_first}
		y(x) = Ae^{r_1 x} + Be^{r_2x}, \hspace{4mm} \forall x\in\mathbb{R},
	\end{equation}
	where $(A,B)\in\mathbb{R}^2$, and $(r_1,r_2)\in\mathbb{R}^2$ are the roots of the polynomial \hbox{$P := \frac{\sigma_N^2}{2}\text{Id}^2 + \mu\text{Id} + \mu\lambda_N$}. In addition, one can easily check in view of~\eqref{eq:approx_eigen_with_mu_sigma} that it holds
	$$
	\frac{\sigma_N^2}{2}\left(\text{Id}+\lambda\right)\left(\text{Id}+\frac{2\mu}{\sigma_N^2}\frac{\lambda_N}{\lambda}\right) = \frac{\sigma_N^2}{2}\text{Id}^2 + \left(\mu\frac{\lambda_N}{\lambda} + \frac{\sigma_N^2}{2}\lambda\right)\text{Id} + \mu\lambda_N = \frac{\sigma_N^2}{2}\text{Id}^2 + \mu\text{Id} + \mu\lambda_N = P,
	$$
	so that $r_1 = -\lambda$ and $r_2 = -\frac{2\mu}{\sigma_N^2}\frac{\lambda_N}{\lambda}$. Therefore, by combining these equalities with the fact that~$x\mapsto E^{(N)}(t,x)$ is of the form presented in~\eqref{eq:proof_bound_second_derivative_intermediate_first} for all $t\geq0$, we obtain that $E^{(N)}$~verifies 
	$$
	\mu \partial_{x}E^{(N)} + \frac{\sigma_N^2}{2}\partial_{x}^2E^{(N)}  = -\mu\lambda_N E^{(N)} = \partial_t E^{(N)},
	$$
	so that this is an eigenfunction of the operator $\mu \partial_{x} + \frac{\sigma_N^2}{2}\partial_{x}^2$ with Dirichlet conditions (notice that $E^{(N)}(.,0) \equiv 0$). This yields, in view of~\eqref{eq:PDE_second_derivative_advection_diffusion}, that~$\overline{v}^{(N)}$ is a solution of
	\begin{equation}\label{eq:proof_bound_second_derivative_intermediate_second}
		\begin{cases}
			\partial_t \overline{v}^{(N)}(t,x) = \mu\partial_{x} \overline{v}^{(N)}(t,x) + \frac{\sigma_N^2}{2}\partial_{x}^2 \overline{v}^{(N)}(t,x), & \forall t\geq0,\,x\geq0,\\
			\overline{v}^{(N)}(t,0) = 0, & \forall t\geq0,\\
			\overline{v}^{(N)}(0,x) = D_{\lambda}\left[\exp\left(- \lambda x\right)- \exp\left(-\frac{2\mu}{\sigma_N^2}\frac{\lambda_N}{\lambda}x\right)\right] - n''_0(x), & \forall x\geq0.
		\end{cases}
	\end{equation}
	We thus only have to prove that a solution of~\eqref{eq:proof_bound_second_derivative_intermediate_second} is non-negative and our lemma will be proved. The fact that $\overline{v}(0,.)$ is non-negative is trivial from the right-hand side of~\eqref{eq:assumptions_main_result_chaplaplace} and the following inequality, consequence of the fact that $\lambda + \lambda' < \frac{2\mu}{\sigma_N^2} - \lambda$ (since $N > \frac{m_2}{2m_1}\left(2\lambda + \lambda'\right)$) and~\eqref{eq:approx_eigen_with_mu_sigma}
	$$
	\forall x\geq0: \hspace{2mm}\exp\left(-(\lambda+\lambda')x\right) \leq \exp\left[-\left(\frac{2\mu}{\sigma_N^2} - \lambda\right)x\right] = \exp\left[-\frac{2\mu}{\sigma_N^2}\frac{\lambda_N}{\lambda}x\right].
	$$
	Hence, we focus on proving that $\overline{v}(t,.)$ is non-negative for all $t >0$. Notice that by adapting the proof of Proposition~\ref{prop:second_derivative_explicit} to $\overline{v}^{(N)}$ instead of~$v^{(N)}$, we have for all $(t,x)\in\mathbb{R}_+^*\times\mathbb{R}$ 
	\begin{equation}\label{eq:proof_bound_second_derivative_intermediate_third}
		\begin{aligned}
			&\overline{v}^{(N)}(t,x) = \left(\overline{v}^{(N)}(0,.)*\Psi_{N}\left(t,.\right)\right)(x) - \exp\left[-\frac{2\mu}{\sigma_N^2}x\right]\left(\overline{v}^{(N)}(0,.)*\Psi_{N}\left(t,.\right)\right)(-x) \\
			&= \frac{1}{\sigma_N\sqrt{2\pi t}}\int_0^{+\infty} \left(\exp\left[-\frac{\left(x-y+\mu t\right)^2}{2\sigma_N^2 t}\right] - \exp\left[-\frac{\left(-x-y+\mu t\right)^2 + 4\mu t x}{2\sigma_N^2 t}\right]\right)\overline{v}^{(N)}(0,y)\dd y.
		\end{aligned}
	\end{equation}
Notice also that for all $(t,x,y)\in\mathbb{R}_+^3$,  it holds 
$$
\left(-x-y+\mu t\right)^2 + 4\mu t x = x^2 +y^2 +\left(\mu t\right)^2 + 2xy + 2\mu tx - 2\mu ty = \left(x-y+\mu t\right)^2 + 4xy \geq \left(x-y+\mu t\right)^2.
$$
By combining the above and the fact that $\overline{v}(0,.)$ is non-negative with~\eqref{eq:proof_bound_second_derivative_intermediate_third}, we have that~$\overline{v}^{(N)}(t,.)$ is the integral of a non-negative function for all $t>0$. Hence, $\overline{v}^{(N)}(t,.)$ is non-negative for all $t>0$, and so~\eqref{eq:bound_second_derivative} is true.  \qed

\subsubsection{Proof of Lemma~\ref{lemm:bound_third_derivative}}\label{subsubsect:proof_bound_third_derivative}


The fact that~\eqref{eq:bound_third_derivative} is true when $t = 0$ is trivial from the left-hand side of~\eqref{eq:assumptions_main_result_chaplaplace}. Hence, we focus in this proof in proving~\eqref{eq:bound_third_derivative} when $t>0$. Notice that as $n_0''\in H_0^1\left(\mathbb{R}_+\right)\cap W^{1,1}\left(\mathbb{R}_+\right)$ by~\eqref{eq:assumptions_main_result_chaplaplace}, we have by using~Proposition~\ref{prop:second_derivative_explicit} (we recall that~$v^{(N)} = \partial_x^2 u^{(N)}$) and the fact that~\hbox{$(f*h)' = f'*h$} when $(f,h)\in  W^{1,1}\left(\mathbb{R}_+\right)\times L^1\left(\mathbb{R}\right)$ and $f(0) = 0$ that for all~$(t,x)\in\mathbb{R}_+^*\times\mathbb{R}_+$
\begin{equation}\label{eq:proof_bound_third_derivative_intermediate_first}
	\begin{aligned}
		\partial_{x}^3u^{(N)}(t,x) &= \left(n'''_0*\Psi_{N}\left(t,.\right)\right)(x) \\ 
		&+ \exp\left(-\frac{2\mu}{\sigma_N^2}x\right)\left[\left(n'''_0*\Psi_{N}\left(t,.\right)\right)(-x) + \frac{2\mu}{\sigma_N^2}\left(n''_0*\Psi_{N}\left(t,.\right)\right)(-x)\right].
	\end{aligned}
\end{equation}
Thus, to obtain~\eqref{eq:bound_third_derivative}, we only have to bound from above each of the three above convolutions, and then to combine these bounds. Let us do it.

We begin by bounding the first term of~\eqref{eq:proof_bound_third_derivative_intermediate_first}. First, we develop the convolution by using the first equality in~\eqref{eq:definition_convolution}, and apply the left-hand side of~\eqref{eq:assumptions_main_result_chaplaplace} to bound $n_0'''$. Thereafter, we develop the integrand in view of the equality $-\frac{1}{2}(c-a)^2 -ad= -\frac{1}{2}(c-a-d)^2 + \frac{1}{2}d^2-cd$ for~$c = \frac{x+\mu t}{\sigma_N \sqrt{t}}$, $a = \frac{y}{\sigma_N \sqrt{t}}$ and $d = \lambda \sigma_N\sqrt{t}$. Finally, we apply the following inequality, for all~$z\in\mathbb{R}$, 
$$
\frac{1}{\sigma_N \sqrt{2\pi t}}\int_{0}^{+\infty} \exp\left[-\frac{1}{2}\left(\frac{z - y}{\sigma_N\sqrt{t}}\right)^2\right]\dd y \leq \frac{1}{\sigma_N \sqrt{2\pi t}}\int_{\mathbb{R}} \exp\left[-\frac{1}{2}\left(\frac{z - y}{\sigma_N\sqrt{t}}\right)^2\right]\dd y = 1, 
$$
and use~\eqref{eq:approx_eigen_with_mu_sigma} to make appear $\lambda_N$. We obtain that for all $(t,x)\in\mathbb{R}_+^*\times\mathbb{R}_+$
\begin{equation}\label{eq:proof_bound_third_derivative_intermediate_second}
	\begin{aligned}
		\Big|\!\left(n'''_0*\Psi_{N}\left(t,.\right)\right)(x&)\Big| \leq \frac{C_{\lambda}}{\sigma_N \sqrt{2\pi t}} \int_{0}^{+\infty}\! \exp\left[-\frac{1}{2}\left(\frac{x - y + \mu t}{\sigma_N\sqrt{t}}\right)^2-\lambda y\right]  \dd y \\
		&= \frac{C_{\lambda}}{\sigma_N \sqrt{2\pi t}} \int_{0}^{+\infty}\! \exp\left[-\frac{1}{2}\left(\frac{x - y + \mu t - \lambda\sigma_N^2t}{\sigma_N\sqrt{t}}\right)^2+ \frac{\lambda^2\sigma_N^2t}{2} - \lambda\left(\mu t + x\right)\right]  \dd y \\ 
		&\leq C_{\lambda}\exp\left[\frac{\lambda^2\sigma_N^2t}{2} - \lambda\left(\mu t + x\right)\right] = C_{\lambda}\exp\left[-\mu\lambda_N t -\lambda x\right].
	\end{aligned}
\end{equation}
We now bound from above the two other convolutions in~\eqref{eq:proof_bound_third_derivative_intermediate_first}. To do so, we only have to do the same steps as those done to obtain~\eqref{eq:proof_bound_third_derivative_intermediate_second}, replacing $\mu t + x$ with $\mu t-x$, and using the right-hand side of~\eqref{eq:assumptions_main_result_chaplaplace} for the last convolution instead of the left-hand side. We obtain that for all~$(t,x)\in\mathbb{R}_+^*\times\mathbb{R}_+$
\begin{equation}\label{eq:proof_bound_third_derivative_intermediate_third}
	\begin{aligned}
		\left|\left(n'''_0*\Psi_{N}\left(t,.\right)\right)(-x)\right| &\leq C_{\lambda}\exp\left[\frac{\lambda^2\sigma_N^2t}{2} - \lambda\left(\mu t - x\right)\right]
		= C_{\lambda}\exp\left[-\mu\lambda_N t +\lambda x\right],\\
		\left|\left(n''_0*\Psi_{N}\left(t,.\right)\right)(-x)\right| &\leq D_{\lambda}\exp\left[-\mu\lambda_N t +\lambda x\right].
	\end{aligned}
\end{equation}
It thus only remains to combine the three bounds obtained in~\eqref{eq:proof_bound_third_derivative_intermediate_second} and~\eqref{eq:proof_bound_third_derivative_intermediate_third}, and the lemma will be proved. To do this, we first use the triangular inequality, and then the following equality, that comes from~\eqref{eq:approx_eigen_with_mu_sigma}, to simplify the two last terms, for all $(t,x)\in\mathbb{R}_+^2$,

$$
\begin{aligned}
	\exp\left(-\frac{2\mu}{\sigma_N^2}x\right)\exp\left[-\mu\lambda_N t +\lambda x\right] &= \exp\left[ - \mu\lambda_N t -\frac{2\mu}{\sigma_N^2} \frac{\lambda_N}{\lambda} x\right].
\end{aligned}
$$
We obtain the following inequality, which concludes the proof, for all~$(t,x)\in\mathbb{R}_+^*\times\mathbb{R}$, 
$$
\left|\partial_{x}^3u^{(N)}(t,x)\right| \leq C_{\lambda}\exp\left[-\mu\lambda_N t -\lambda x\right] +  \left(C_{\lambda}+D_{\lambda}\frac{2\mu}{\sigma_N^2}\right)\exp\left[ - \mu\lambda_N t -\frac{2\mu}{\sigma_N^2}\frac{\lambda_N}{\lambda} x\right].
$$  
\qed

\subsection{Proof of Theorem~\ref{te:model_approximation}}\label{subsect:proof_full_approximation}

In this section, we present the proof of Theorem~\ref{te:model_approximation}. First, we present the proof of the first statement of the theorem in Section~\ref{subsubsect:proof_approximation_density_lengths}. Then, we prove the second statement in Section~\ref{subsubsect:proof_approximation_cemetery}.
\subsubsection{Proof of Theorem~\ref{te:model_approximation}-\ref{te:model_approximation_first}}\label{subsubsect:proof_approximation_density_lengths}

This proof is inspired by the proof of~\cite[Proposition $3.2$-$(a)$]{olaye_transport_2026}. Let us fix $N > \frac{\lambda m_2}{2m_1}$, and let us write to simplify notations $g(\dd v) = g\left(v\right) \dd v$. Our aim is to control the absolute value of~\hbox{$\overline{u}^{(N)} := n^{(N)} - u^{(N)}$}. We thus need to find the equation verified by $\overline{u}^{(N)}$ to obtain this bound. To do so, we first take the difference between the first lines of~\eqref{eq:scaled_model} and~\eqref{eq:approximation_transport_diffusion}. Then, we use that~\hbox{$n^{(N)} = \overline{u}^{(N)} + u^{(N)}$} to develop the term $n^{(N)}$ in the integral. Finally, we use the two following equalities to put the terms with derivatives in the integral, for all $(t,x)\in\mathbb{R}_+^2$,
$$
\begin{aligned}
	bm_1\partial_{x} u^{(N)}(t,x) &= bN \int_0^{\delta} \frac{v}{N}g(v) \dd v\partial_{x} u^{(N)}(t,x), \\
	\frac{bm_2}{2N}\partial_{x}^2 u^{(N)}(t,x) &= bN \int_0^{\delta} \frac{v^2}{2N^2} g(v) \dd v\partial_{x}^2 u^{(N)}(t,x). 
\end{aligned}
$$
We obtain that $\overline{u}^{(N)}$ verifies for all $(t,x)\in\mathbb{R}_+^2$
\begin{align}
	\partial_t \overline{u}^{(N)}(t,x)  &= \! bN\int_0^{\delta}\Big[n^{(N)}\left(t,x + \frac{v}{N}\right) - n^{(N)}(t,x)\Big]g\left(\dd v\right) - \! bm_1\partial_{x} u^{(N)}(t,x) -\! \frac{bm_2}{2N}\partial_{x}^2 u^{(N)}(t,x) \nonumber \\
	&= bN\int_0^{\delta}\Big[\overline{u}^{(N)}\left(t,x + \frac{v}{N}\right)- \overline{u}^{(N)}(t,x)\Big]g\left(\dd v\right) + bN\int_0^{\delta}\Big[u^{(N)}\left(t,x + \frac{v}{N}\right) \nonumber  \\
	&- u^{(N)}(t,x) - \frac{v}{N}\partial_{x} u^{(N)}(t,x) - \frac{v^2}{2N^2}\partial_{x}^2 u^{(N)}(t,x)\Big]g\left(\dd v\right). \label{eq:proof_approximation_lengths_intermediate_first}
\end{align}
Thanks to this equation, we are now able to obtain an analytical expression for $\overline{u}^{(N)}$. In order to write this expression, we consider for all $(s,y)\in\mathbb{R}_+^2$
\begin{align}
	H_N(s,y) &:= bN\int_0^{\delta}\Big[u^{(N)}\left(s,y + \frac{v}{N}\right)  - u^{(N)}(s,y) - \frac{v}{N}\partial_{x} u^{(N)}(s,y) - \frac{v^2}{2N^2}\partial_{x}^2 u^{(N)}(s,y)\Big]g\left(\dd v\right) \nonumber \\ 
	&= \frac{b}{2N^2}\int_0^{\delta} \left[\int_0^1(1 - w)^2 \partial_{x}^3 u^{(N)}\left(s,y+w\frac{v}{N}\right) \dd w\right] v^3  g\left(\dd v\right), \label{eq:proof_approximation_lengths_intermediate_second}
\end{align}
where the last equality comes from a Taylor expansion with remainder in integral form. By Lemma~\ref{lemm:bound_third_derivative}, we have for all $(s,y)\in\mathbb{R}_+^2$ that 
\begin{equation}\label{eq:proof_approximation_lengths_intermediate_second_bis}
	\begin{aligned}
		\big|H_N(s,y)\big| &\leq \frac{b}{2N^2}\int_0^{\delta}\left[\int_0^1(1 - w)^2 \bigg(C_{\lambda}\exp\left[-bm_1\lambda_N s -\lambda y\right] +  \left[C_{\lambda}+D_{\lambda}\frac{2Nm_1}{m_2}\right]\right.\\
		&\left.\times \exp\left[ - bm_1\lambda_N s -\frac{2Nm_1}{m_2}\frac{\lambda_N}{\lambda} y\right]\bigg)\dd w\right] v^3 g\left(\dd v\right)\\
		&= \frac{bm_3}{6N^2}\exp\left[-bm_1\lambda_Ns\right]\left(C_{\lambda}\exp\left(-\lambda y\right) +   \left[C_{\lambda}+\frac{2Nm_1}{m_2}D_{\lambda}\right]\exp\left[-\frac{2Nm_1}{m_2}\frac{\lambda_N}{\lambda}  y\right]\right).
	\end{aligned}
\end{equation}
Therefore, by plugging~\eqref{eq:proof_approximation_lengths_intermediate_second} in~\eqref{eq:proof_approximation_lengths_intermediate_first} to obtain an equation of the form given in~\eqref{eq:PDE_to_develop_general_chaplaplace}, and then applying Lemma~\ref{lemm:alternative_representation} for the equation we have obtained, we get that for all $(t,x)\in\mathbb{R}_+^2$
\begin{equation}\label{eq:proof_approximation_lengths_intermediate_third}
	\overline{u}^{(N)}(t,x) = \int_0^t \Phi_N\left(H_N(s,.)\right)\left(t-s,x\right) \dd s .
\end{equation}
Our aim is to use the above analytical expression to bound $\overline{u}^{(N)}$. By applying Lemma~\ref{lemm:maximum_principle} to bound~\eqref{eq:proof_approximation_lengths_intermediate_third} in view of~\eqref{eq:proof_approximation_lengths_intermediate_second_bis}, and then using the left-hand side of~\eqref{eq:approximation_eigenvalues_chaplaplace} to make appear~$-bm_1\lambda_N (t-s)$ in the first exponential, we have for all $(t,x)\in\mathbb{R}_+^2$ that
\begin{align}
	\big|\overline{u}^{(N)}(t,x)\big| &\leq \frac{bm_3}{6 N^2}\int_0^t \exp\left[-bm_1\lambda_Ns\right]\bigg(C_{\lambda}\exp\left[-bm_1\lambda_N (t-s)  -\lambda x\right] +\left[C_{\lambda}+\frac{2Nm_1}{m_2}D_{\lambda}\right] \nonumber\\
	&\times\exp\left[-bN\left[1 - \mathcal{L}(g)\left(\frac{2m_1\lambda_N}{m_2\lambda } \right)\right](t-s) -\frac{2Nm_1}{m_2}\frac{\lambda_N}{\lambda}x\right]\bigg)\dd s.\label{eq:proof_approximation_lengths_intermediate_fourth}
\end{align}
Then, as it holds for all $t\geq0$ 
\begin{equation}\label{eq:proof_approximation_lengths_intermediate_fourth_bis}
	\int_0^t \exp\left[-bm_1\lambda_Ns -bm_1\lambda_N (t-s)   -\lambda x\right]\dd s = t\exp\left[-bm_1\lambda_N t  -\lambda x\right],
\end{equation}
we only have to bound the second term in the sum in the right-hand side of~\eqref{eq:proof_approximation_lengths_intermediate_fourth} to prove~\eqref{eq:main_result_first_statement}. To obtain this bound, notice that for all $(t,a,d)\in\mathbb{R}_+^2\times\mathbb{R}_+^*$, it holds when~$\frac{d}{2} \leq a$
\begin{equation}\label{eq:proof_approximation_lengths_intermediate_fifth}
	\int_0^t \exp\left[- as - d(t-s)\right] \dd s \leq \int_0^t \exp\left[- \frac{d}{2}s - d(t-s)\right] \dd s = \frac{e^{-\frac{d}{2} t}-e^{-d t}}{\frac{d}{2}} \leq \frac{2e^{-\frac{d}{2} t}}{d},
\end{equation}
and it holds when $\frac{d}{2} > a$
\begin{equation}\label{eq:proof_approximation_lengths_intermediate_sixth}
	\int_0^t \exp\left[- as - d(t-s)\right] \dd s =  \frac{e^{-at} - e^{-dt}}{d-a} \leq  \frac{e^{-at} - e^{-dt}}{d-\frac{d}{2}} \leq \frac{2e^{-at}}{d}.
\end{equation}
Notice also that we have $\lambda_N \in\mathbb{R}_+$ and $C_N\in\mathbb{R}_+^*$, from~\eqref{eq:approximation_eigenvalues_chaplaplace} and the fact that $N > \frac{\lambda m_2}{2m_1}$. Then, by applying~\eqref{eq:proof_approximation_lengths_intermediate_fifth} and~\eqref{eq:proof_approximation_lengths_intermediate_sixth} for $a= bm_1\lambda_N $ and $d= bN\left[1 - \mathcal{L}(g)\left(\frac{2m_1\lambda_N}{m_2\lambda } \right)\right]$, and thereafter plugging what we get and~\eqref{eq:proof_approximation_lengths_intermediate_fourth_bis} in~\eqref{eq:proof_approximation_lengths_intermediate_fourth}, we obtain that~\eqref{eq:main_result_first_statement} is true for $c_1  = c_2 = \frac{C_{\lambda}bm_3}{6 }$ and~$c_3 = \frac{D_{\lambda}bm_1m_3}{3 m_2}$. \qed


\subsubsection{Proof of Theorem~\ref{te:model_approximation}-\ref{te:model_approximation_second}}\label{subsubsect:proof_approximation_cemetery}

Let $N > \frac{m_2}{2m_1}\left(2\lambda + \lambda'\right)$. In view of the second line of~\eqref{eq:scaled_model}, the third line of~\eqref{eq:approximation_transport_diffusion}, and the triangular inequality, we have for all $t\geq0$ that
\begin{equation}\label{eq:proof_approximation_cemetery_intermediate_first}
	\begin{aligned}
		\left|n_{\partial}^{(N)}(t) - u_{\partial}^{(N)}(t)\right| 
		&\leq b\left|\int_0^{\delta} \left[n^{(N)}\left(t,\frac{v}{N}\right) - u^{(N)}\left(t,\frac{v}{N}\right)\right] \left(1-G(v)\right)\dd v\right| \\
		&+ \left|b\int_0^{\delta}  u^{(N)}\left(t,\frac{v}{N}\right) \left(1-G(v)\right)\dd v - bm_1u^{(N)}(t,0) - \frac{bm_2}{2N}\partial_xu^{(N)}(t,0)\right|.
	\end{aligned}
\end{equation}
Our aim is to bound each of the two terms that compose the right-hand side of~\eqref{eq:proof_approximation_cemetery_intermediate_first}. To do this, our first step is to bound the first of these terms, and obtaining a better expression for the second one. To control the first term, we first apply Theorem~\ref{te:model_approximation}-\ref{te:model_approximation_first}, and then use the fact that $\int_0^{\delta}(1-G(v)) \dd v = m_1$ (obtained by integration by part) to simplify the bound. To develop the second one, we first put $u^{(N)}(t,0)$ and $\partial_xu^{(N)}(t,0)$ in the integral, by using that $\int_0^{\delta}(1-G(v)) \dd v = m_1$ and $\int_0^{\delta}v(1-G(v)) \dd v = \frac{m_2}{2}$, and then use a Taylor expansion to develop~$u^{(N)}\left(s,\frac{v}{N}\right) - u^{(N)}(t,0) - \frac{v}{N}\partial_xu^{(N)}(t,0)$. We obtain that for all $t\geq0$
\begin{align}
	\left|n_{\partial}^{(N)}(t) - u_{\partial}^{(N)}(t)\right| &\leq 
	b\left(\frac{c_1t}{N^2}\exp\left[-bm_1\lambda_N t\right]   +  \frac{1}{C_N}\left(\frac{c_2}{N^3} + \frac{c_3}{N^2}\right)\exp\left[-\beta_Nt\right]\right)\int_0^{\delta}(1-G(v))\dd v  \nonumber\\
	&+b\left|\int_0^{\delta}  \left[u^{(N)}\left(s,\frac{v}{N}\right) - u^{(N)}(t,0) - \frac{v}{N}\partial_xu^{(N)}(t,0)\right] \left(1-G(v)\right)\dd v\right| \nonumber \\
	&= \frac{bm_1c_1t}{N^2}\exp\left[-bm_1\lambda_N t\right] +  \frac{bm_1}{C_N}\left(\frac{c_2}{N^3} + \frac{c_3}{N^2}\right)\exp\left[-\beta_Nt\right] \label{eq:proof_approximation_cemetery_intermediate_second} \\ 
	&+ b\left|\int_0^{\delta}\left[\int_0^1\left(1-w\right)\partial_{x}^2u^{(N)}\left(t,w\frac{v}{N}\right)\frac{v^2}{N^2}\dd w\right]\left(1-G(v)\right)\dd v \right|. \nonumber
\end{align}
It thus only remains to bound the last term of~\eqref{eq:proof_approximation_cemetery_intermediate_second}, and Theorem~\ref{te:model_approximation}-\ref{te:model_approximation_second} will be proved. To do so, we first apply Lemma~\ref{lemm:bound_second_derivative} to bound $\partial_{x}^2u^{(N)}\left(t,w\frac{v}{N}\right)$ by $D_{\lambda}\exp\left[-bm_1\lambda_Nt\right]$. Then, we use the equalities \hbox{$\int_0^{1} \left(1-w\right) \dd w = \frac{1}{2}$} and~\hbox{$\int_0^{\delta}v^2(1-G(v)) \dd v = \frac{m_3}{3}$}. We obtain that~\eqref{eq:main_result_second_statement} is true for the constants~$c_1$,~$c_2$ and~$c_3$ defined at the end of Section~\ref{subsubsect:proof_approximation_density_lengths}, and~$c_4 = \frac{D_{\lambda}bm_3}{6}$. \qed

\section{Estimation with the Gaver-Stehfest algorithm}\label{sect:gaver_stehfest}
In this section, we use the theoretical results presented in Section~\ref{subsect:main_result_chaplaplace} to solve our inverse problem. First, in Section~\ref{subsect:estimation_noise_free_data}, we present the estimator we have constructed in the case where we have noise-free data, and check its quality of estimation on simulated data. Then, in Section~\ref{subsect:estimation_random_variables}, we do the same in the case where there is noise  related to sampling in our data.


\subsection{Estimation on noise-free data}\label{subsect:estimation_noise_free_data}

In this section, we assume that we observe perfectly the cemetery~$n_{\partial}^{(N)}$, where $N>0$. Our aim is to retrieve $n_0$ from this observation. This setting is not realistic, since in practice, there is always noise on $n_{\partial}^{(N)}$. However, this study has an interest since it allows us to present the best estimation of $n_0$ we can have with our inference method. First, in Section~\ref{subsubsect:estimation_gaver_stehfest_noise_free}, we construct an estimator of $n_0$ from $n_{\partial}^{(N)}$ based on Proposition~\ref{prop:link_laplace_transforms}. Then, in Section~\ref{subsubsect:estimation_results_noise_free}, we show estimations results with this estimator. Finally, in Section~\ref{subsubsect:difficulties_small_variability}, we study the limitations of this estimator.

\subsubsection{Estimator of \texorpdfstring{$n_0$}{n\_0} on noise-free data}\label{subsubsect:estimation_gaver_stehfest_noise_free}
The fact that~$n_{\partial}^{(N)}$ can be approximated by~$u_{\partial}^{(N)}$ when $N$ is large (see~\eqref{eq:main_result_second_statement}), and that the Laplace transform of $n_0$ can be computed thanks to the Laplace transform of $u_{\partial}^{(N)}$ (see Proposition~\ref{prop:link_laplace_transforms}), gives us the intuition that we can estimate $n_0$ from $n_{\partial}^{(N)}$ by doing a Laplace transform inversion. Precisely, in view of~\eqref{eq:link_laplace_transforms}, our aim is to invert the Laplace transform of the following function to estimate~$n_0$
$$
p\in\mathcal{P}_N \mapsto \left[1 + p \frac{bm_1}{(bm_1)^2\frac{2N}{bm_2} +q_N(p)}\right]\mathcal{L}\left(n_{\partial}^{(N)}\right)\left(q_N(p)\right).
$$
Different numerical methods exist to invert Laplace transforms~\cite{cohen_numerical_2007}. A large part of them are inspired by either the Bromwich integral formula (see Fourier series method~\cite{abate_numerical_1992}, Talbot's method~\cite{talbot_accurate_1979}), or the Post–Widder formula (see Gaver-Stehfest algorithm~\cite{gaver_observing_1966,stehfest_algorithm_1970,stehfest_remark_1970}, Jagerman's method~\cite{jagerman_inversion_1982}). To recall, in view of~\cite[Theorem~$7.3$, p.66]{widder_laplace_1946}, the Bromwich integral formula states that for all $f\in L^1_{loc}\left(\mathbb{R}_+\right)$ continuous with local bounded variations, it holds for all $\gamma >\mathcal{R}(f)$ and $x>0$
\begin{equation}\label{eq:dft_bromwich_integral}
	f(x) = \frac{1}{2\pi i}\int_{\eta - i \infty}^{\eta + i \infty}\mathcal{L}(f)(p)e^{px} dp. 
\end{equation}
The Post-Widder formula, given in~\cite[Theorem~$6.a$, p.288]{widder_laplace_1946}, states for its part that for all $f\in L^1\left(\mathbb{R}_+\right)$ continuous and $x>0$, it holds 
\begin{equation}\label{eq:post_widder_formula}
	f(x) = \lim_{n \rightarrow +\infty} \frac{(-1)^{n}}{n!} \left(\frac{n}{x}\right)^{n+1} \mathcal{L}^{(n)}(f)\left(\frac{n}{x}\right).
\end{equation}
Due to the fact that it is required to compute or approximate the $n$-th derivative of $\mathcal{L}(f)$ to use a method based on the Post-Widder formula, these methods are often less stable numerically. In our case, we have a link between the Laplace transform on the sets $\left(\mathcal{P}_N\right)_{N>0}$ instead of the full complex plane, see Proposition~\ref{prop:link_laplace_transforms}. These sets do not have an interval of the form~\hbox{$[\gamma - i \infty, \gamma + i\infty]$}, where $\gamma \in \mathbb{R}$, as illustrated in Figure~\ref{fig:laplace_set_chapter}. Thus, in view of~\eqref{eq:dft_bromwich_integral}, methods based on the Bromwich integral formula are more difficult to apply in our context. As a result, we use a method based on the Post-Widder formula, even if it is less stable. 
\needspace{2\baselineskip}
\begin{figure}[!htb]
	\centering
	\begin{subfigure}[t]{0.478\textwidth}
		\centering
		\includegraphics[scale = 0.25]{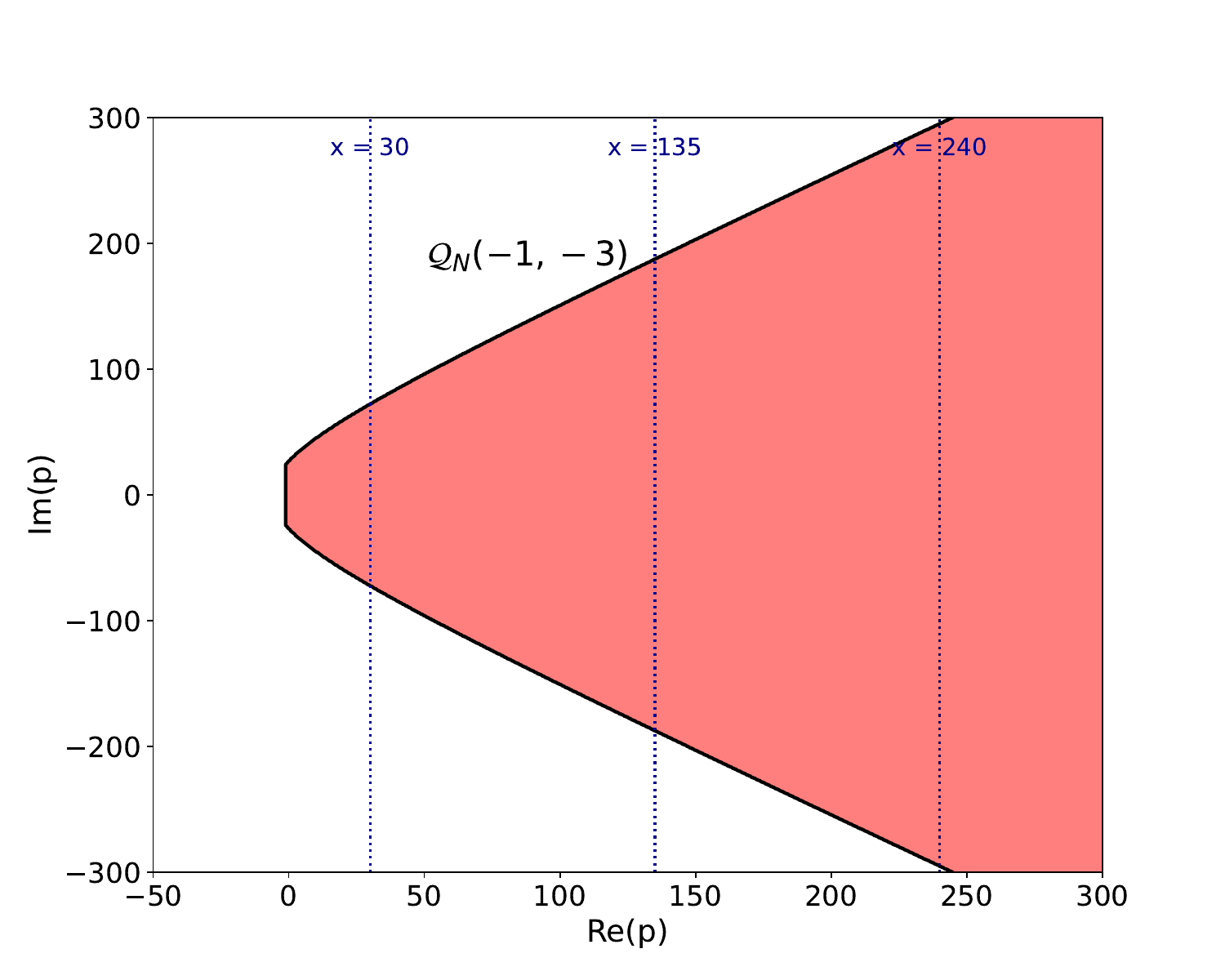}
		\caption{Representation when $(\alpha,\beta) = (-1,-3)$.}
	\end{subfigure}
	\hfill
	\begin{subfigure}[t]{0.478\textwidth}
		\centering
		\includegraphics[scale = 0.25]{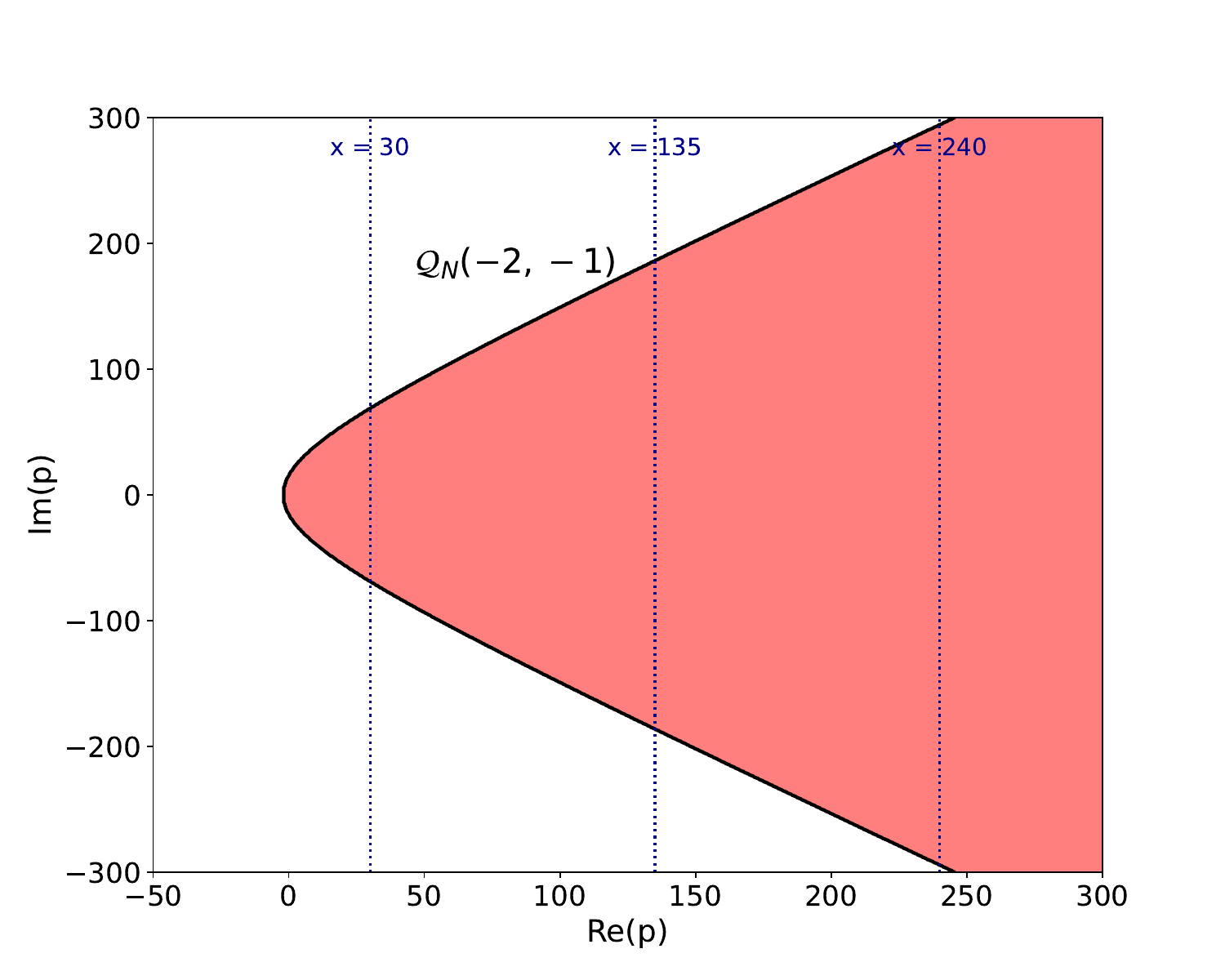}
		\caption{Representation when $(\alpha,\beta) = (-2,-1)$.}
	\end{subfigure}
	\caption{Representation of the set $Q_N(\alpha,\beta) =\left\{p\in\mathbb{C}\,|\,Re(p) > \alpha,\,Re(q_N(p)) > \beta\right\}$ (in red) for different values of $(\alpha, \beta)\in\left(\mathbb{R}_{-}\right)^2$, and comparison with vertical lines (in blue), when $b = 1$, $g = 1_{[0,1]}$ and~$N = 40$.\textit{ We see that as the sets $\left(Q_N(\alpha,\beta)\right)_{(\alpha,\beta)\in\left(\mathbb{R}_{-}\right)^2}$ have a conical shape, they do not contain vertical lines. Since the set $\mathcal{P}_N$ defined in~\eqref{eq:definition_P} has the same form as the sets $\left(Q_N(\alpha,\beta)\right)_{(\alpha,\beta)\in\left(\mathbb{R}_{-}\right)^2}$, this figure illustrates that $\mathcal{P}_N$ does not contain an interval of the form~$[\gamma - i\infty,\gamma + i\infty]$, where $\gamma\in\mathbb{R}$.}}\label{fig:laplace_set_chapter}
\end{figure}

The most famous numerical method to invert Laplace transforms based on the Post-Widder formula is the \textit{Gaver-Stehfest algorithm}~\cite{gaver_observing_1966,stehfest_algorithm_1970,stehfest_remark_1970}. This algorithm has the advantage of being easy to implement, and very accurate when the Laplace transform is explicit. In addition, theoretical results exist to justify the quality of this method~\cite{kuznetsov_convergence_2013,kuznetsov_rate_2022}. This method states that for all $f \in L^1\left(\mathbb{R}_+\right)$ continuous with local bounded variations, we can invert $\mathcal{L}(f)$ by using the following formula (see~\cite[Theorem~$1.1$]{kuznetsov_convergence_2013})
\begin{equation}\label{eq:gaver_stehfest_formula}
	\lim_{K\rightarrow +\infty} \frac{\log(2)}{x}\sum_{n = 1}^{2K} \frac{(-1)^{n+K}}{K!} \sum_{j = \left\lfloor\frac{n+1}{2}\right\rfloor}^{\min(n,K)} j^{K+1} \binom{K}{j}\binom{2j}{j}\binom{j}{n-j}\mathcal{L}(f)\left(n\frac{\log(2)}{x}\right) = f(x).
\end{equation}
The main idea behind this algorithm is to compute finite differences instead of the derivatives stated in~\eqref{eq:post_widder_formula}, and to use a convergence acceleration method to speed up the inversion. We refer to~\cite{kuznetsov_convergence_2013} for more information. In the current article, we propose to use this algorithm to estimate~$n_0$. Specifically, in view of~\eqref{eq:link_laplace_transforms} and~\eqref{eq:gaver_stehfest_formula}, we would like to use the following estimator to estimate $n_0$ from~$n_{\partial}^{(N)}$, that depends on a parameter $K\in\mathbb{N}^*$ (chosen as large as numerical stability allows, see Appendix~\ref{sect:round_off_errors}),
\begin{equation}\label{eq:estimator_gaver_stehfest_noise_free}
	\begin{aligned}
		\forall x>0: \hspace{2mm}\widehat{n}_0^{(N,K)}(x) &= \frac{\log(2)}{x}\sum_{n = 1}^{2K} \frac{(-1)^{n+K}}{K!} \sum_{j = \left\lfloor\frac{n+1}{2}\right\rfloor}^{\min(n,K)} j^{K+1} \binom{K}{j}\binom{2j}{j}\binom{j}{n-j}\\ 
		&\times \left[1 + n\frac{\log(2)}{x} \frac{bm_1}{(bm_1)^2\frac{2N}{bm_2} +q_N\left(n\frac{\log(2)}{x}\right)}\right]\mathcal{L}\left(n_{\partial}^{(N)}\right)\left(q_N\left(n\frac{\log(2)}{x}\right)\right).
	\end{aligned}
\end{equation}
No theoretical result providing a bound on the error between $\widehat{n}_0^{(N,K)}$ and $n_0$ has been obtained for the moment, mainly because the fact that the set $\mathcal{P}_N$ does not contain intervals of the form $[\gamma - i \infty, \gamma + i\infty]$ yields difficulties when proving such a result. This however corresponds to a work in progress. Despite the fact we do not have this result, we can still check on simulations if this estimator has good estimation results or not. This is what we do in the next sections.

\subsubsection{Estimation results on noise-free data}\label{subsubsect:estimation_results_noise_free}

We begin by introducing for which functions $n_0$ we check the quality of our estimator. We define for all $(\ell,\beta)\in\left(\mathbb{R}_+^*\right)^2$ the following function, for all $x\geq0$,
\begin{equation}\label{eq:density_gamma_distribution}
h_{\ell,\beta}(x) := \frac{\beta^{\ell}}{\Gamma(\ell)}x^{\ell-1}\exp\left(-\beta x\right).
\end{equation}
This function corresponds to the probability density function of a Gamma distribution with parameter $(\ell,\beta)$. In the estimations presented here, we choose $n_0$ belonging to the set~$ \left\{h_{\ell,\beta}\,|\,(\ell,\beta)\in\mathbb{N}^*\times\mathbb{R}_+^*,\,\ell\geq4\right\}$ for three reasons. The first reason is that the following proposition holds for such initial distributions. It allows us to compute~\eqref{eq:estimator_gaver_stehfest_noise_free} without having to do a numerical approximation of the Laplace transform and it is proved in Appendix~\ref{appendix:explicit_laplace_erlang}.
\begin{prop}\label{prop:explicit_laplace_erlang}
	Assume that there exists $\left(\ell,\beta\right)\in\mathbb{N}^*\times\mathbb{R}_+^*$ such that $n_0 = h_{\ell,\beta}$. Then, for all $N>0$, there exist $\left(a_{0,N},\hdots,a_{\ell-1,N}\right)\in\left(\mathbb{R}_+^*\right)^{\ell }$ and $\tilde{\beta}_N >0$, such that for all $p\in\mathbb{C}$ verifying~$\text{Re}\left(p\right) > -bm_1 \tilde{\beta}_N$
	\begin{equation}\label{eq:explicit_laplace_erlang}
 	\mathcal{L}\left(n_{\partial}^{(N)}\right)(p) = \sum_{i = 0}^{\ell -1} a_{i,N}\frac{i!}{\left(p + bm_1 \tilde{\beta}_N\right)^{i+1}}.
	\end{equation}
	Moreover, $\left(a_{0,N},\hdots,a_{\ell-1,N}\right)$ and $\tilde{\beta}_N$ can be explicitly computed thanks to $b$, $N$, and the sets of functions~$\left(\mathcal{L}(g)\text{Id}^i\right)_{j\in\llbracket0,\ell-1\rrbracket}$ and $\left(\mathcal{L}\left(\left(1-G\right)\text{Id}^i\right)\right)_{j\in\llbracket0,\ell-1\rrbracket}$ restricted on $\mathbb{R}_+^{*}$.
\end{prop}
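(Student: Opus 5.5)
The plan is to show that the finite-dimensional space $V_\ell := \mathrm{span}\{x^i e^{-\beta x}\,|\,i\in\llbracket 0,\ell-1\rrbracket\}$ is invariant under the operator $\mathcal{A}_N f(x) := bN\int_0^{\delta}\left[f\left(x+\frac{v}{N}\right)-f(x)\right]g(v)\dd v$. The equation~\eqref{eq:scaled_model_first_third_lines} then reduces to a triangular linear ODE system. Its solution is an exponential times polynomials in $t$, and the Laplace transform of $n_{\partial}^{(N)}$ then has exactly the form~\eqref{eq:explicit_laplace_erlang}.

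\textbf{Step 1: action of $\mathcal{A}_N$ on $V_\ell$.} For $k\leq \ell-1$, I expand $(x+v/N)^k e^{-\beta(x+v/N)}$ binomially and obtain
$$
\mathcal{A}_N\left(\mathrm{Id}^k e^{-\beta \mathrm{Id}}\right)(x) = e^{-\beta x}\left[-bN\left(1-\mathcal{L}(g)\left(\tfrac{\beta}{N}\right)\right)x^k + bN\sum_{i=0}^{k-1}\binom{k}{i}N^{i-k}\mathcal{L}\left(g\,\mathrm{Id}^{k-i}\right)\left(\tfrac{\beta}{N}\right)x^i\right].
$$
So $\mathcal{A}_N$ is represented on $V_\ell$ by a triangular matrix $M_N$. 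Its diagonal is constant, equal to $-\kappa_N$ with $\kappa_N := bN(1-\mathcal{L}(g)(\beta/N))>0$, and its off-diagonal entries are strictly positive because $g$ is a nonnegative probability density, so every $\mathcal{L}(g\,\mathrm{Id}^j)(\beta/N)>0$. I then set $\tilde\beta_N := \kappa_N/(bm_1)$.

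\textbf{Step 2: solving the ODE and invoking uniqueness.} I look for $n^{(N)}(t,x)=e^{-\beta x}\sum_{i<\ell}c_i(t)x^i$ with $c'=M_Nc$ and $c(0)=\frac{\beta^\ell}{(\ell-1)!}e_{\ell-1}$. Writing $c_i(t)=e^{-\kappa_N t}d_i(t)$, the vector $d$ solves a nilpotent system, solved backwards from $i=\ell-1$. Each $d_i$ is therefore a polynomial in $t$ of degree $\ell-1-i$ with nonnegative coefficients, and $d_i$ has strictly positive leading coefficient, thanks to the positivity of the off-diagonal entries. This candidate is nonnegative and lies in $C(\mathbb{R}_+,L^1(\mathbb{R}_+))$, so by Proposition~\ref{prop:uniqueness_solution} it is $n^{(N)}$.

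\textbf{Step 3: the cemetery and its Laplace transform.} Plugging into the second line of~\eqref{eq:scaled_model} gives
$$
n_{\partial}^{(N)}(t)=b\,e^{-\kappa_N t}\sum_{i<\ell}d_i(t)N^{-i}\mathcal{L}\left((1-G)\mathrm{Id}^i\right)\left(\tfrac{\beta}{N}\right) =: e^{-\kappa_N t}\sum_{i<\ell}a_{i,N}t^i .
$$
Each $a_{i,N}$ is a finite sum of products of positive quantities, all computable from $b$, $N$, $\mathcal{L}(g\,\mathrm{Id}^j)$ and $\mathcal{L}((1-G)\mathrm{Id}^j)$ evaluated at $\beta/N$. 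Finally, $\int_0^\infty t^ie^{-(p+\kappa_N)t}\dd t=i!/(p+\kappa_N)^{i+1}$ for $\mathrm{Re}(p)>-\kappa_N=-bm_1\tilde\beta_N$, which yields~\eqref{eq:explicit_laplace_erlang}.

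\textbf{Main obstacle.} I expect the main difficulty to be the strict positivity of every $a_{i,N}$, not only of the top one. I would get it by tracking the constant and lower-order terms of the $d_i$: in Step 2 I would argue by induction on $\ell-1-i$ that each coefficient of $d_i$ in degrees $0$ through $\ell-1-i$ is a nonnegative sum containing at least one positive term, using that the off-diagonal entries of $M_N$ are strictly positive. Collecting the coefficient of $t^i$ in $n_\partial^{(N)}$ then gives a sum over $j\le \ell-1-i$ of nonnegative terms, including the positive leading contribution of $d_{\ell-1-i}$. The rest of the argument is routine bookkeeping with binomial sums.
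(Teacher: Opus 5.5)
Your proof is correct, but it takes a different route from the paper.

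\textbf{How the paper argues.} The paper does not derive $n^{(N)}$ itself. It imports from~\cite{olaye_transport_2026} the closed form
\[
n^{(N)}(t,x)=(-1)^{\ell-1}\tfrac{\beta^\ell}{(\ell-1)!}\,\varphi_\ell(t,x)\,e^{-bm_1\tilde\beta_N t-\beta x},
\]
where $\varphi_\ell$ is a complete Bell polynomial in the quantities $bN^{1-k}\mathcal{L}(g\,\mathrm{Id}^k)(\beta/N)\,t$. It then uses the binomial-type and scaling identities for Bell polynomials to separate the $x$- and $t$-dependence, and integrates against $1-G$. Each $a_{i,N}$ is finally read off as an explicit sum of partial Bell polynomials $B_{j,i}$.

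\textbf{How your route compares.} Your invariant-subspace argument (triangular generator $M_N$ on $V_\ell$, then the nilpotent system for $d$) must produce the same coefficients, since they are the coefficients of $t^i e^{-bm_1\tilde\beta_N t}$ in $n_\partial^{(N)}$. The chain sums in $(M_N^m)_{i,\ell-1}$ are what the partial Bell polynomials package.
\begin{itemize}
\item Your route is self-contained: it needs only Proposition~\ref{prop:uniqueness_solution}, not an external formula.
\item It makes the strict positivity of every $a_{i,N}$ transparent. The paper leaves this implicit; it follows from positivity of $B_{j,i}$ at positive arguments.
\item The paper's route, in exchange, gives a closed formula ready for the numerics of Section~\ref{sect:gaver_stehfest}.
\end{itemize}

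\textbf{A false claim in your last paragraph.} The induction claim is wrong as written. For $i<\ell-1$ you have $d_i(0)=c_i(0)=0$, so the degree-$0$ coefficient of $d_i$ is zero and contains no positive term.

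You do not need that induction, however. The polynomial $d_{\ell-1-i}$ has exact degree $i$ with positive leading coefficient, and every other contribution to the coefficient of $t^i$ is nonnegative. Hence
\[
a_{i,N}\geq bN^{-(\ell-1-i)}\,\mathcal{L}\left((1-G)\mathrm{Id}^{\ell-1-i}\right)\left(\tfrac{\beta}{N}\right)\times\left(\text{leading coefficient of } d_{\ell-1-i}\right)>0 .
\]
This is exactly the argument in your final sentence. Drop the stronger induction claim and keep that one.
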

\begin{rem}
	When $g = \frac{1}{\delta}1_{[0,\delta]}$, we can obtain an explicit form for the functions in the sets~$\left(\mathcal{L}\left(g\text{Id}^j\right)\right)_{j\in\llbracket0,\ell-1\rrbracket}$ and $\left(\mathcal{L}\left(\left(1-G\right)\text{Id}^j\right)\right)_{j\in\llbracket0,\ell-1\rrbracket}$. For example, in view of~\cite[p.$137$]{Ibe_2005}, one has that for all $j\in\llbracket0,\ell-1\rrbracket$, $\beta >0$,
	$$
	\mathcal{L}\left(g\text{Id}^j\right)(\beta) = \frac{1}{\delta}\int_0^{\delta} x^j\exp\left(-\beta x\right) \dd x = \frac{j!}{\delta\beta^{j+1}}\left[1 - \sum_{i = 0}^{j}\frac{\left(\beta x\right)^i}{i!}\exp\left(-\beta x\right)\right].
	$$
	For $\left(\mathcal{L}\left(\left(1-G\right)\text{Id}^j\right)\right)_{j\in\llbracket0,\ell-1\rrbracket}$, a similar argument can be done since $G = \frac{1}{\delta}\text{Id}1_{[0,\delta]}$. However, the formula is slightly more complicated.
\end{rem}
\noindent The second reason we use the functions in the set $\left\{h_{\ell,\beta}\,|\,(\ell,\beta)\in\mathbb{N}^*\times\mathbb{R}_+^*,\,\ell\geq4\right\}$ is that they are unimodal, so biologically relevant, see~\cite{Xu2013}. The last reason is that the assumptions of Theorem~\ref{te:model_approximation} and Proposition~\ref{prop:link_laplace_transforms} are verified for $n_0 = h_{\ell,\beta}$ when $\ell \geq 4$ and $\beta > 0$. This can be easily obtained from the fact that by Leibniz's formula and the change of variable $j' = \ell-1 -j$, it holds for all $n\in\llbracket1,\ell-1\rrbracket$ and~$x\geq0$
\begin{equation}\label{eq:n_th_derivative_h_ell_beta}
\begin{aligned}
\frac{\dd^n}{\dd x^n} h_{\ell,\beta}(x) &= \frac{\beta^{\ell}}{(\ell-1)!} \sum_{j = 0}^{n}\binom{n}{j} \left[\frac{(\ell-1)!}{(\ell-1 -j)!}x^{\ell-1 -j}\right] \left[\left(-\beta\right)^{n-j}\exp\left(-\beta x\right)\right] \\
&= \sum_{j' = \ell-1 -n}^{\ell-1}\left(-1\right)^{n- (\ell-1) + j'}\binom{n}{\ell-1 -j'} \frac{\left(\beta\right)^{n+j'+1}}{j'!} x^{j'} \exp\left(-\beta x\right),
\end{aligned}
\end{equation}
and from the fact that for all $j'\geq1$, $\lambda\in(0,\beta)$ and $\lambda'>0$, there exists $C_{j',\lambda,\lambda'}>0$ such that 
$$
\forall x\geq0: \hspace{2.5mm}x^{j'}\exp(-\beta x) \leq C_{j',\lambda,\lambda'}\exp(-\lambda x)\left(1-\exp(-\lambda'x)\right). 
$$
\begin{rem}\label{rem:coefficient_variation_gamma}
	To recall, in view of~\eqref{eq:density_gamma_distribution} and~\cite[p.$109$]{forbes_statistical_2011}, the coefficient of variation of the distribution associated to $n_0 = h_{\ell,\beta}$, where $(\ell,\beta) \in \mathbb{N}^*\times\mathbb{R}_+^*$, is equal to $\ell^{-\frac{1}{2}}$.
\end{rem}

We now proceed to the estimations. First, we fix for model parameters $N = 40$, $b=1$ and~\hbox{$g = 1_{[0,1]}$}, which are the parameters we  used in~\cite{olaye_transport_2026}. The scaling parameter has been chosen as $N = 40$ because this is the most realistic value for the budding yeast, see~\cite[Section~$2.3$]{olaye_transport_2026}. This species is the one that motivates our study. Then, for \hbox{$n_0\in\left\{h_{9,12},h_{16,16},h_{25,30},h_{49,50}\right\}$}, we compute $\mathcal{L}\left(n_{\partial}^{(N)}\right)$ thanks to~\eqref{eq:explicit_laplace_erlang}, and use it to retrieve $n_0$ by using~$\widehat{n}_0^{(N,K)}$ defined in~\eqref{eq:estimator_gaver_stehfest_noise_free}, for~$K = 50$. We plot in Figure~\ref{fig:estimation_gaver_stehfest_noisefree} the estimated curves of $n_0$ (in red), and compare them with their associated theoretical curve (in black), and the estimator of $n_0$ constructed in~\cite{olaye_transport_2026} (in blue). To recall, this estimator is defined as 
$\widehat{n}^ {(\text{old},N)}_0(x) := \frac{1}{bm_1}n_{\partial}^{(N)}\left(\frac{x}{bm_1}\right)$ for all $x\geq0$. We observe in Figure~\ref{fig:estimation_gaver_stehfest_noisefree} that the estimations obtained with $\widehat{n}^ {(N,K)}_0$ are much more satisfactory than the ones with~$\widehat{n}_0^{(\text{old},N)}$. In particular, the diffusivity of $n_0$ is each time well-estimated with~$\widehat{n}^{(N,K)}_0$, which is not the case for the estimator~$\widehat{n}_0^{(\text{old},N)}$ (we refer to~\cite[Section~$5.1.2$]{olaye_transport_2026} to know why). Our new estimator seems thus to improve the estimation obtained with $\widehat{n}_0^{(\text{old},N)}$.

\begin{figure}[!htb]
	\centering
	\begin{subfigure}[t]{0.485\textwidth}
		\centering
		\includegraphics[scale = 0.35]{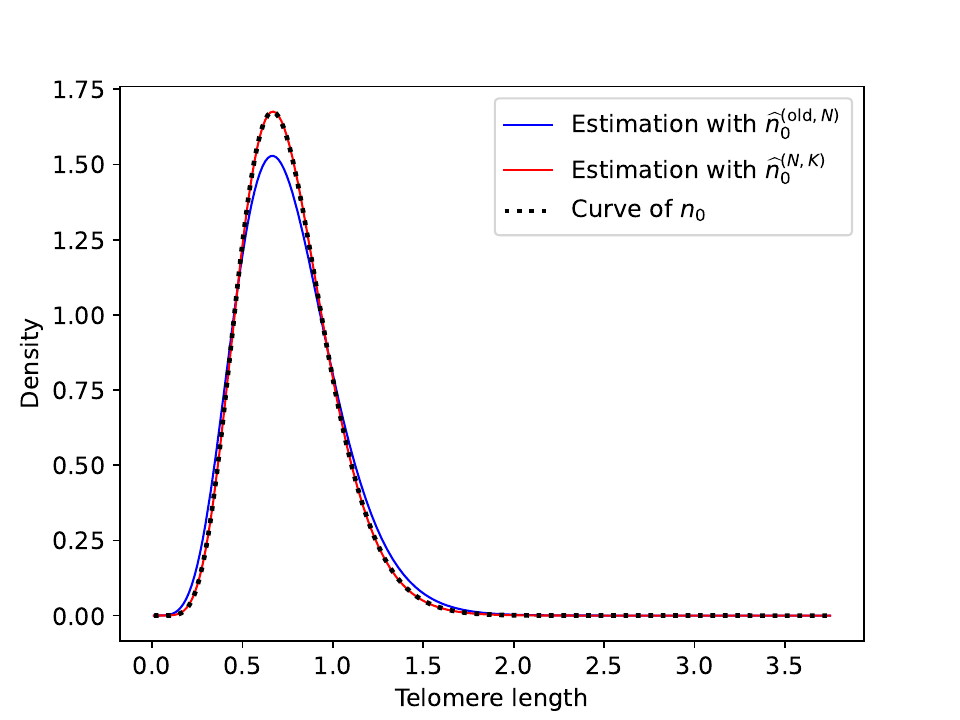}
		\caption{Estimation for $n_0 = h_{9,12}$.}\label{fig:estimation_gaver_stehfest_noisefree_first}
	\end{subfigure}
	\hfill
	\begin{subfigure}[t]{0.485\textwidth}
		\centering
		\includegraphics[scale = 0.35]{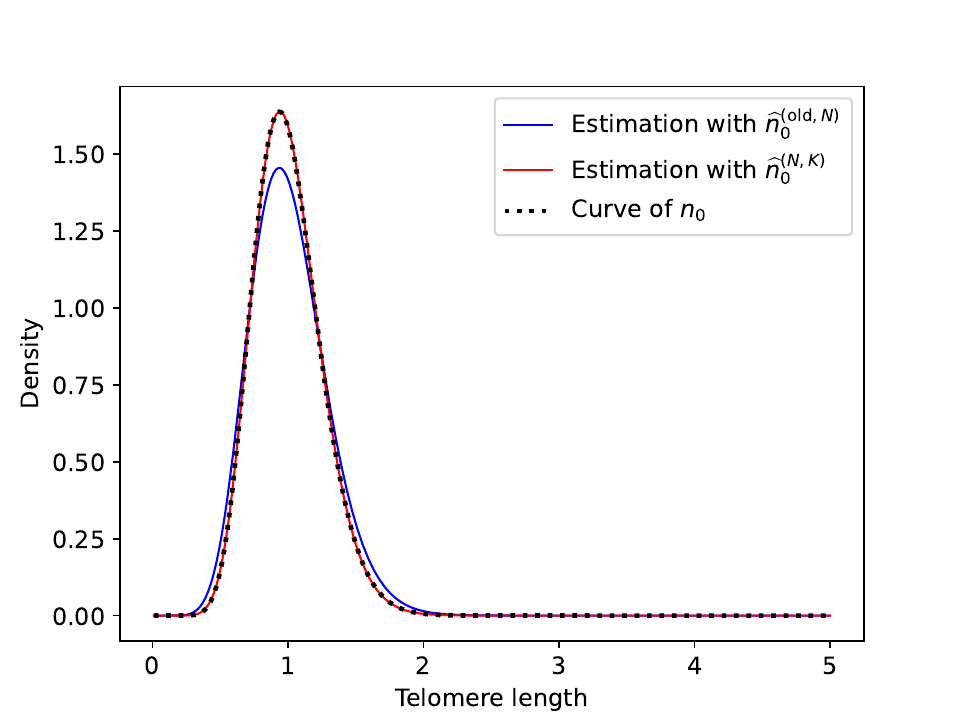}
		\caption{Estimation for $n_0 = h_{16,16}$.}\label{fig:estimation_gaver_stehfest_noisefree_second}
	\end{subfigure}
	\begin{subfigure}[t]{0.485\textwidth}
		\centering
		\includegraphics[scale = 0.35]{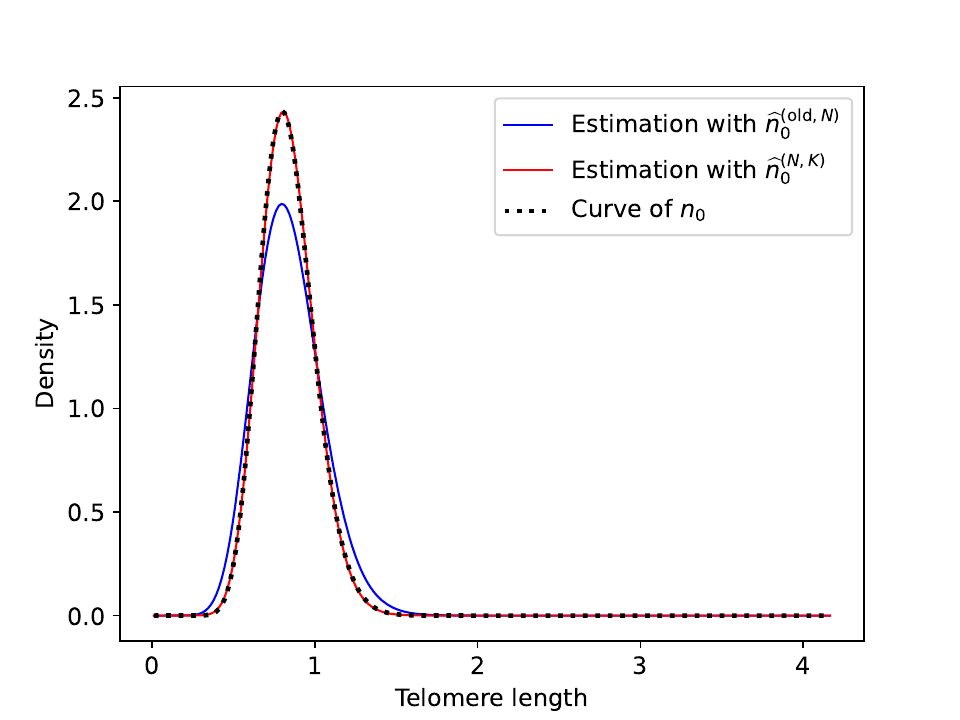}
		\caption{Estimation for $n_0 = h_{25,30}$.}\label{fig:estimation_gaver_stehfest_noisefree_third}
	\end{subfigure}
	\hfill
	\begin{subfigure}[t]{0.485\textwidth}
		\centering
		\includegraphics[scale = 0.35]{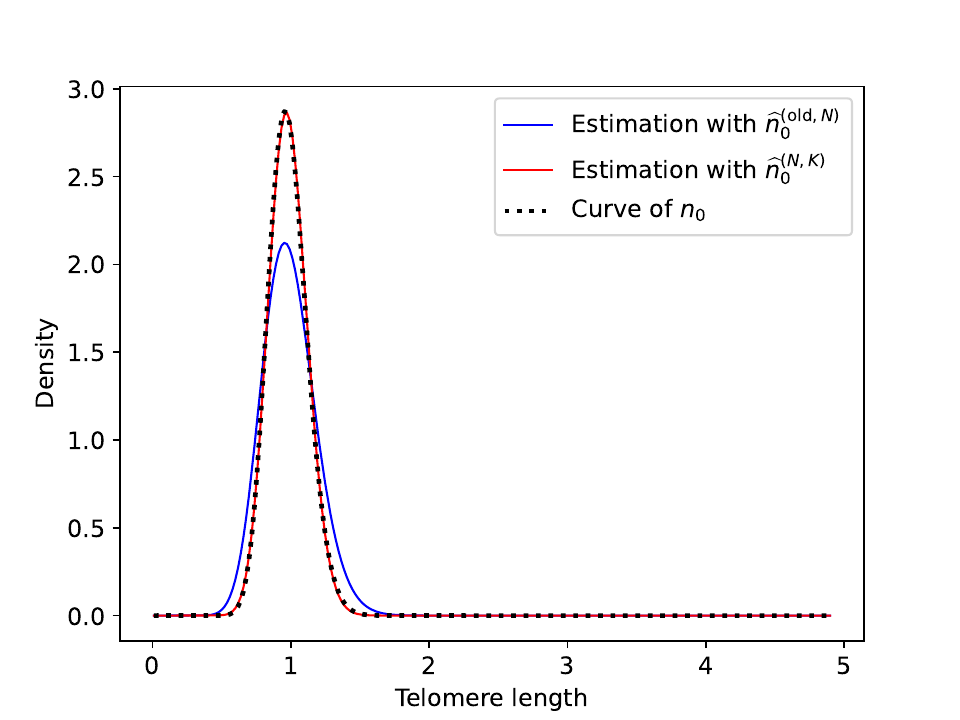}
		\caption{Estimation for $n_0 = h_{49,50}$.}\label{fig:estimation_gaver_stehfest_noisefree_fourth}
	\end{subfigure}
	\caption{Estimation results with the estimator $\widehat{n}_0^{(N,K)}$ defined in~\eqref{eq:estimator_gaver_stehfest_noise_free} when $b = 1$, $g = 1_{[0,1]}$, $N = 40$,~$K=50$ and $n_0\in\left\{h_{9,12},h_{16,16},h_{25,30},h_{49,50}\right\}$. Comparison with the results obtained with the estimator~$\widehat{n}_0^{(\text{old},N)}(x) =\frac{1}{bm_1}n_{\partial}^{(N)}\left(\frac{x}{bm_1}\right)$ for all $x\geq0$, constructed in~\cite{olaye_transport_2026}. \textit{For plotting the curves, the numerical precision of the computations was set to $200$ digits.}}\label{fig:estimation_gaver_stehfest_noisefree}
\end{figure}

To illustrate that the improvement observed in Figure~\ref{fig:estimation_gaver_stehfest_noisefree} is related to the fact that we have constructed our new estimator with a finer model approximation, we now fix for initial distribution~$n_0 = h_{9,12}$. Then, we proceed to estimations with both $\widehat{n}_0^{(\text{old},N)}$ and~$\widehat{n}^ {(N,K)}_0$ for the same parameters $b$, $g$ and $K$ as before, and for $N \in\left\{1 + 4\ell\,|\,\ell\in\llbracket0,25\rrbracket\right\}$. We plot in Figure~\ref{fig:curve_estimation_errors_logscale} the estimation errors in $L^1$-norm we have obtained as a function of $N$, at the logarithm scale, in blue and red respectively. We observe that the estimation error of $\widehat{n}^{(N,K)}_0$ goes more quickly to~$0$ when $N\rightarrow+\infty$. In particular, we show in this figure that there exists~\hbox{$C_1 > 0$} such that the curve of~$\left|\left|\widehat{n}_0^{(\text{old},N)} - n_0\right|\right|_{L^1\left(\mathbb{R}_+\right)}$ as a function of $N$ decreases to~$0$ at a speed similar to the one of the function \hbox{$N\in\mathbb{R}_+^*\mapsto \frac{C_1}{N}$}. Similarly, there exists $C_2 > 0$ such that the curve of~$\left|\left|\widehat{n}_0^{(N,K)} - n_0\right|\right|_{L^1\left(\mathbb{R}_+\right)}$ as a function of $N$ decreases to~$0$ at a speed similar to the curve of the function~$N\in\mathbb{R}_+^*\mapsto \frac{C_2}{N^2}$. This is coherent with the fact that the model approximation used to construct $\widehat{n}_0^{(\text{old},N)}$ decreases to~$0$ at a speed of the order~$\frac{1}{N}$  (see~\cite[Proposition~$3.2$]{olaye_transport_2026}), and that the one to construct $\widehat{n}_0^{(N,K)}$ decreases to~$0$ at a speed~$\frac{1}{N^2}$ (see~Theorem~\ref{te:model_approximation}). For~the estimator~$\widehat{n}_0^{(\text{old},N)}$, we have obtained in~\cite[Theorem~$2.7$]{olaye_transport_2026} a theoretical result justifying that the error between $\widehat{n}_0^{(\text{old},N)}$ and~$n_0$ decreases to $0$ at a speed $\frac{1}{N}$ when $N\rightarrow+\infty$. As said in Section~\ref{subsubsect:estimation_gaver_stehfest_noise_free}, obtaining a theoretical result on the error between $\widehat{n}_0^{(N,K)}$ and $n_0$ is not that easy and corresponds to a work in progress. We can however already conjecture from Figure~\ref{fig:curve_estimation_errors_logscale} that this error decreases  to $0$ at a speed $\frac{1}{N^2}$ when $N\rightarrow+\infty$.


\begin{figure}[!htb]
	\centering
	\includegraphics[scale = 0.35]{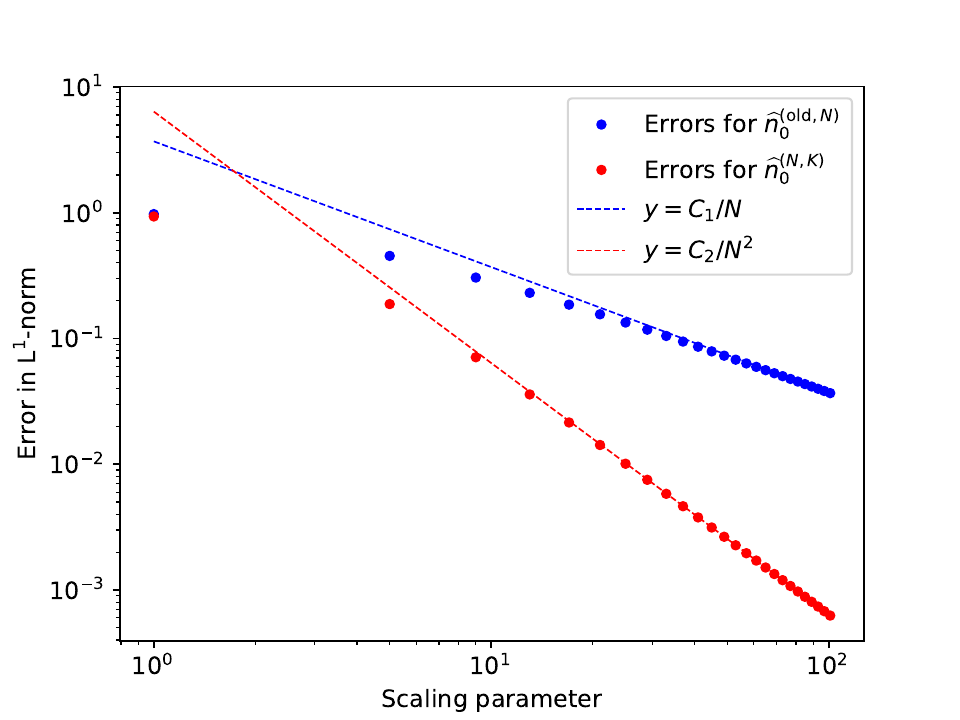}
	\caption{Evolutions of the estimation errors in $L^1$-norm of~$\widehat{n}_0^{(N,K)}$ and $\widehat{n}_0^{(\text{old},N)}$ as a function of~$N\in \left\{1 + 4\ell\,|\,\ell\in\llbracket0,25\rrbracket\right\}$, when $b = 1$, $g = 1_{[0,1]}$, $N = 40$,~$K=50$ and $n_0 = h_{9,12}$. Comparison with the curves of the functions $N\in\mathbb{R}_+^{*} \mapsto \frac{C_1}{N}$ and $N\in\mathbb{R}_+^{*} \mapsto \frac{C_2}{N^2}$, where~$C_1 > 0$ and $C_2 > 0$ are estimated thanks to a constrained least-squares regression on the last error points. \textit{For plotting the curves, the numerical precision of the computations was set to~$200$~digits.}}\label{fig:curve_estimation_errors_logscale}
\end{figure}
\subsubsection{Estimation difficulties for a very small initial variability}\label{subsubsect:difficulties_small_variability}


In~\cite[Section~$5.1.2$]{olaye_transport_2026}, we have shown that the estimation with~$\widehat{n}_0^{(\text{old},N)}$ does not work well when $n_0$ corresponds to the density function of a distribution with a small variability and~\hbox{$N = 40$}. What we called \textit{small variability} in this article was when $n_0$ is associated to a coefficient of variation of the order of $0.15-0.2$. Then, in the current work, we have shown in Figures~\ref{fig:estimation_gaver_stehfest_noisefree_third} and~\ref{fig:estimation_gaver_stehfest_noisefree_fourth} that when $n_0$ has a coefficient of variation of this order, see Remark~\ref{rem:coefficient_variation_gamma}, our new estimator $\widehat{n}_0^{(N,K)}$ allows us to manage this issue when $K\in\mathbb{N}^*$ is large. It thus remains to check how the quality of estimation evolves when $n_0$ has a coefficient of variation of a smaller order than the one studied in  \cite[Section~$5.1.2$]{olaye_transport_2026}, notably to study the limitations of our estimator. To check this, we have done exactly the same protocol as in Section~\ref{subsubsect:estimation_results_noise_free}, this time in the case where $n_0\in\left\{h_{100,120},h_{225,200},h_{324,330},h_{400,500}\right\}$. In view of Remark~\ref{rem:coefficient_variation_gamma}, this case corresponds to having an initial distribution with coefficient of variation~\hbox{$1/10 = 0.1$}, \hbox{$1/15 \approx 0.067$}, \hbox{$1/18 \approx 0.056$} or~\hbox{$1/20 = 0.05$}, so of the order of $0.05-0.1$. The estimation results we have obtained with this protocol are plotted in Figure~\ref{fig:estimation_verysmall_variability}. What we observe is that the estimations obtained with $\widehat{n}^ {(N,K)}_0$ are less satisfactory than in Figure~\ref{fig:estimation_gaver_stehfest_noisefree}, especially in Figures~\ref{fig:estimation_verysmall_variability_third} and~\ref{fig:estimation_verysmall_variability_fourth}. Precisely, the smaller the coefficient of variation of the initial distribution is, the poorer the estimation of the position of the mode and the asymmetry of the curve becomes. In addition, a part of the estimated curve is negative for the estimations with~$\widehat{n}^{(N,K)}_0$ in Figure~\ref{fig:estimation_verysmall_variability}, and the negative part increases when the coefficient of variation of the distribution associated to $n_0$ decreases. In Figures~\ref{fig:estimation_verysmall_variability_first} and~\ref{fig:estimation_verysmall_variability_second}, the error on the estimation of the mode of the curve is not dramatically poor, the negative part of~$\widehat{n}^{(N,K)}_0$ is very small, and the estimations with $\widehat{n}^{(N,K)}_0$ clearly improve the estimations obtained with~$\widehat{n}^{(\text{old},N)}_0$. Our estimator stays therefore useful in this context (coefficients of variation of $0.1$ and~$0.067$), even if it is not perfect. In Figures~\ref{fig:estimation_verysmall_variability_third} and~\ref{fig:estimation_verysmall_variability_fourth}, which correspond to estimations of initial distributions with coefficients of variation $0.056$ and~$0.05$, the estimation of the mode of the curve is not satisfactory at all, and the negative part is quite large. Our new estimator seems thus not very reliable when the coefficient of variation of the initial distribution is very small. We however mention that this limitation of our estimator is not critical. Indeed, having such a small coefficient of variation for the initial distribution is not biologically relevant, see~\cite{Xu2013}. 

\begin{figure}[!htb]
	\centering
	\begin{subfigure}[t]{0.485\textwidth}
		\centering
		\includegraphics[scale = 0.35]{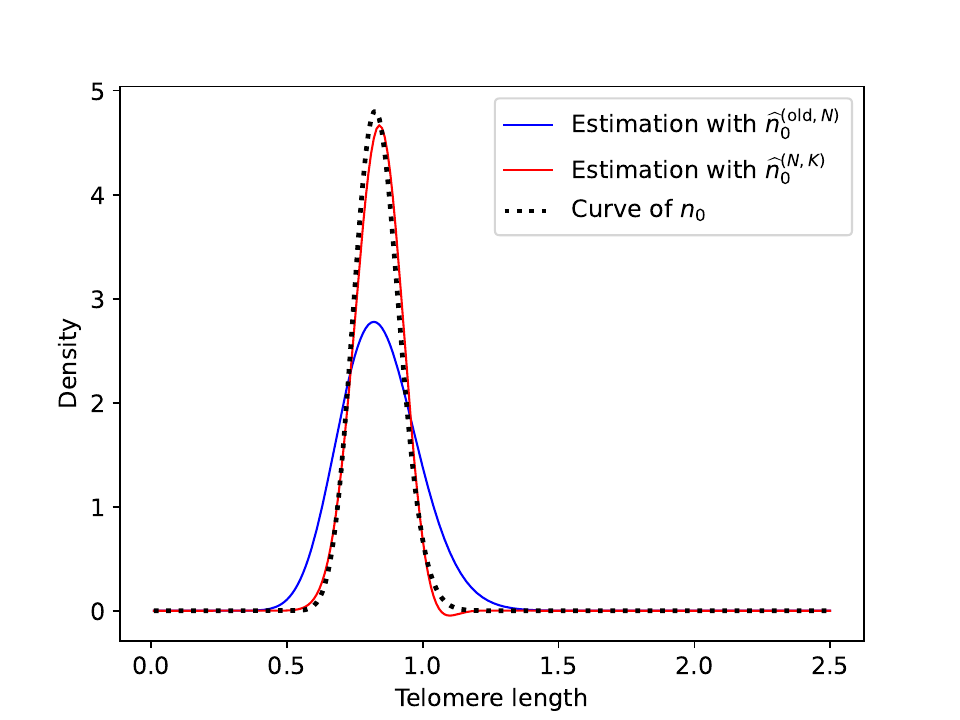}
		\caption{Estimation for $n_0 = h_{100,120}$.}\label{fig:estimation_verysmall_variability_first}
	\end{subfigure}
	\hfill
	\begin{subfigure}[t]{0.485\textwidth}
		\centering
		\includegraphics[scale = 0.35]{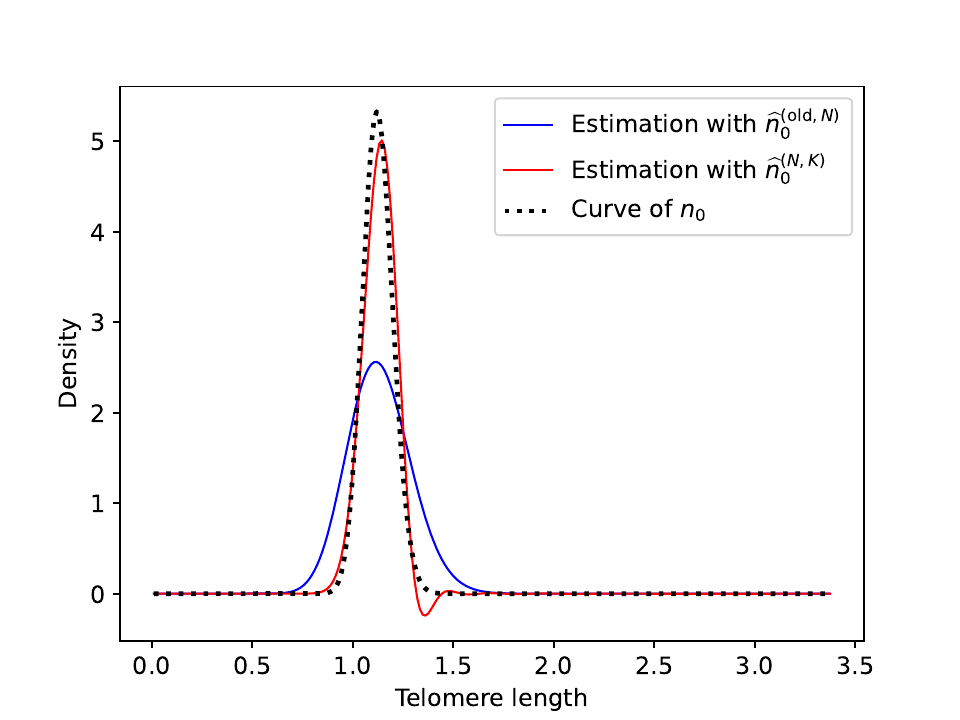}
		\caption{Estimation for $n_0 = h_{225,200}$.}\label{fig:estimation_verysmall_variability_second}
	\end{subfigure}
	\begin{subfigure}[t]{0.485\textwidth}
		\centering
		\includegraphics[scale = 0.35]{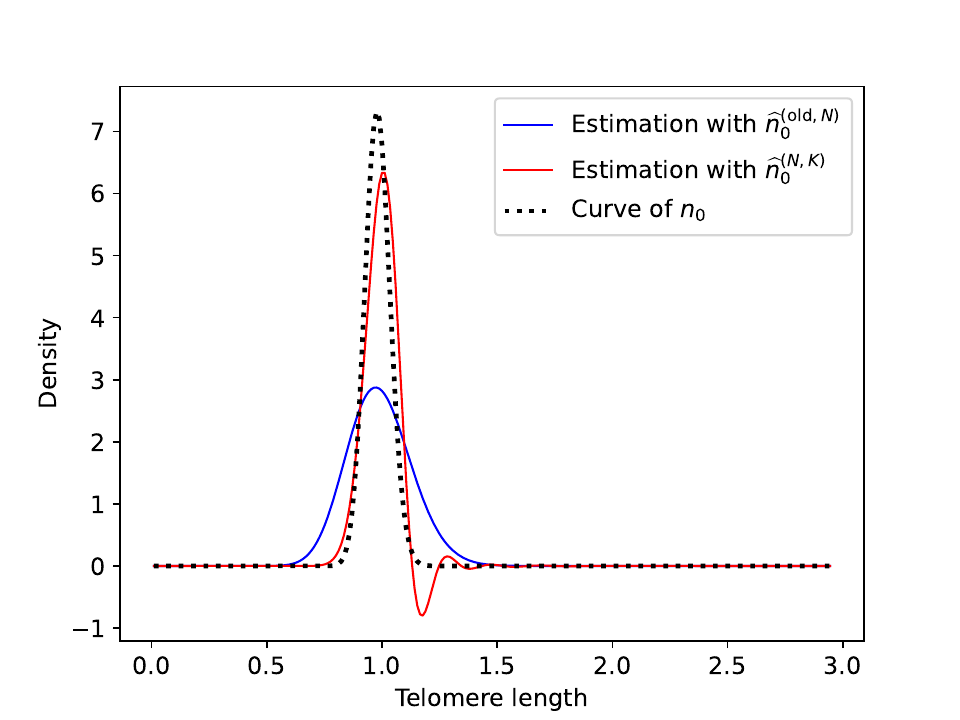}
		\caption{Estimation for $n_0 = h_{324,330}$.}\label{fig:estimation_verysmall_variability_third}
	\end{subfigure}
	\hfill
	\begin{subfigure}[t]{0.485\textwidth}
		\centering
		\includegraphics[scale = 0.35]{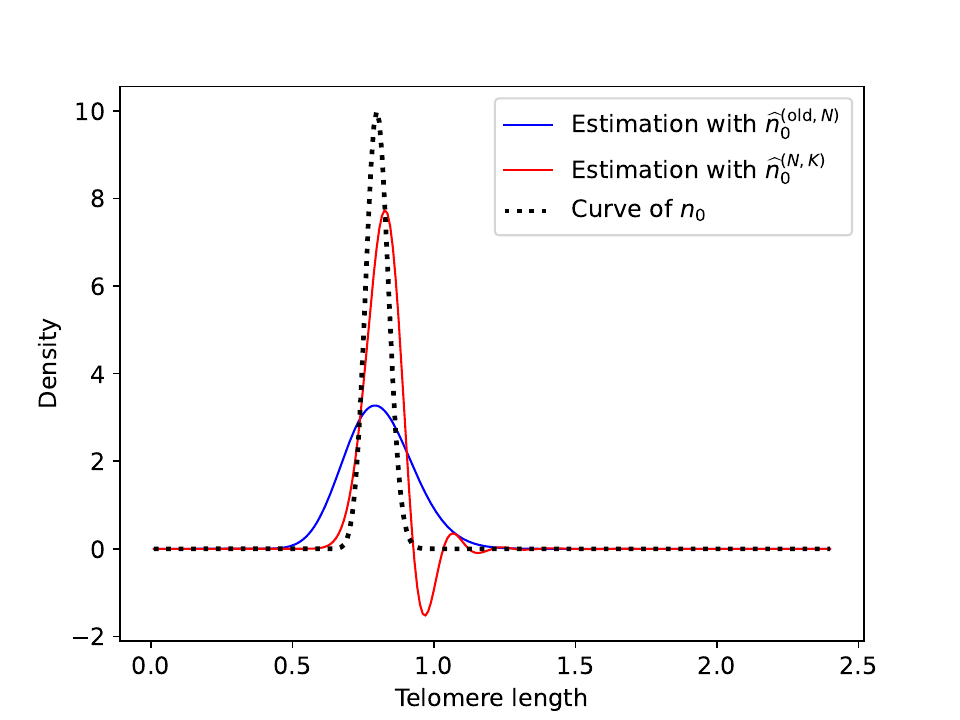}
		\caption{Estimation for $n_0 = h_{400,500}$.}\label{fig:estimation_verysmall_variability_fourth}
	\end{subfigure}
	\caption{Estimation results with the estimator $\widehat{n}_0^{(N,K)}$ defined in~\eqref{eq:estimator_gaver_stehfest_noise_free} when $b = 1$, $g = 1_{[0,1]}$, $N = 40$,~$K=50$ and $n_0\in\left\{h_{100,120},h_{225,200},h_{324,330},h_{400,500}\right\}$. Comparison with the results obtained with the estimator~$\widehat{n}_0^{(\text{old},N)}(x) =\frac{1}{bm_1}n_{\partial}^{(N)}\left(\frac{x}{bm_1}\right)$ for all $x\geq0$, constructed in~\cite{olaye_transport_2026}. \textit{For plotting the curves, the numerical precision of the computations was set to $200$ digits.}}\label{fig:estimation_verysmall_variability}
\end{figure}

\subsection{Estimation when random variables are observed}\label{subsect:estimation_random_variables}

In practice, we never observe $n_{\partial}^{(N)}$. Instead, we observe a sequence of random variables~$\left(T_i^{(N)}\right)_{1\leq i \leq n_d}$, where $n_d\in\mathbb{N}^*$, distributed according to $n_{\partial}^{(N)}$ with possibly noise related to measurement errors. In this section, we check if the estimator defined in~\eqref{eq:estimator_gaver_stehfest_noise_free} can be adapted to this setting, and only take into account the noise related to sampling. First, in Section~\ref{subsubsect:estimation_gaver_stehfest_random_variables}, we present the equivalent of~\eqref{eq:estimator_gaver_stehfest_noise_free} in the case where random variables distributed according to~$n_{\partial}^{(N)}$ are observed. Then, in Section~\ref{subsubsect:choice_smoothing_parameter}, we present how we propose to choose the smoothing parameter of this new estimator. Finally, in Section~\ref{subsubsect:estimation_results_random_variables}, we present estimation~results.
\subsubsection{Estimator of \texorpdfstring{$n_0$}{n\_0} when random variables are observed}\label{subsubsect:estimation_gaver_stehfest_random_variables}


Let $N>0$, $n_d\in\mathbb{N}^*$ and $\left(T_i^{(N)}\right)_{1\leq i \leq n_d}$ a sequence of random variables distributed according to $n_{\partial}^{(N)}$. To find an equivalent to~$\widehat{n}_0^{(N,K)}$ defined in~\eqref{eq:estimator_gaver_stehfest_noise_free} with $\left(T_i^{(N)}\right)_{1\leq i \leq n_d}$, we proceed in two steps. First, we consider an estimator $n_{\partial}^{(N)}$ from $\left(T_i^{(N)}\right)_{1\leq i \leq n_d}$, for which we can compute its Laplace transform easily. Then, the second step is to replace $n_{\partial}^{(N)}$ with its estimator in~\eqref{eq:estimator_gaver_stehfest_noise_free}, to obtain a new estimator of $n_0$.

Following what is done in~\cite[Section~$5.2.1$]{olaye_transport_2026}, the method we use to estimate $n_{\partial}^{(N)}$ from~$\left(T_i^{(N)}\right)_{1\leq i \leq n_d}$ is to use a smoothed version of $\frac{1}{n_{d}}\sum_{i = 1}^{n_d} \delta_{T_i^{(N)}}$, which corresponds to the empirical estimator of $n_{\partial}^{(N)}$. In~\cite{olaye_transport_2026}, the empirical estimator was smoothed by using the fact that for all $i\in\llbracket1,n_d\rrbracket$ and $\alpha >0$, the measure~$\frac{1}{t\alpha}\rho\left(\frac{1}{\alpha}\log\left(\frac{t}{T_i}\right)\right) \dd  t$, where $\rho$ is the density of a standard Gaussian, is a smoothed approximation of $\delta_{T_i}(dx)$. In particular, we have proved in~\cite[Eq.~$5.3$]{olaye_transport_2026} that for all $f\in\mathcal{C}_c\left(\mathbb{R}_+\right)$, $x>0$, it holds 
$$
\lim_{\alpha \rightarrow 0} \int_{t\in\mathbb{R}_+^*} \frac{1}{t\alpha}\rho\left(\frac{1}{\alpha}\log\left(\frac{t}{x}\right)\right)f(t)\dd t = \delta_x(f).
$$
Thus, the following estimator, that depends on a smoothing parameter $\alpha >0$, was what we used to estimate $n_{\partial}^{(N)}$
\begin{equation}\label{eq:estimation_cemetery_lognormal}
\forall t\geq0: \hspace{2mm}\widetilde{n}_{\partial}^{(N,\alpha)}\left(t\right) := \frac{1}{n_d}\sum_{i = 1}^{n_d} \frac{1}{t\alpha}\rho\left(\frac{1}{\alpha}\log\left(\frac{t}{T_{i}^{(N)}}\right)\right).
\end{equation}
This estimator is called log-transform kernel density estimator of $n_{\partial}^{(N)}$~\cite{charpentier_2015,nguyen_positive_2019}.
\begin{rem}\label{rem:approximation_lognormal}
Since $\rho$ is the density of a standard Gaussian, we have for all $\alpha >0$ and~$i\in\llbracket1,n_d\rrbracket$ 
\begin{equation}\label{eq:equality_lognormal}
\frac{1}{t\alpha}\rho\left(\frac{1}{\alpha}\log\left(\frac{t}{T_{i}^{(N)}}\right)\right) \dd t = \frac{1}{\sqrt{2\pi}t\alpha}\exp\left[- \frac{\left(\log(t) - \log\left(T_i^{(N)}\right)\right)^2}{2\alpha^2}\right] \dd t.
\end{equation}
As the right-hand side corresponds to the density of a log-normal distribution with parameters $\left(\log\left(T_i^{(N)}\right),\alpha\right)$,~\eqref{eq:equality_lognormal} means that to construct~$\widetilde{n}_{\partial}^{(N,\alpha)}$, we have approximated Dirac measures in~$\frac{1}{n_d}\sum_{i = 1}^{n_d} \frac{1}{t\alpha}\delta_{T_{i}^{(N)}}$ with the densities of log-normal distributions.
\end{rem}
In this work, we do not use $\widetilde{n}_{\partial}^{(N,\alpha)}$ to estimate $n_{\partial}^{(N)}$. The first reason is that since $\widetilde{n}_{\partial}^{(N,\alpha)}$ is a linear combination of the densities of log-normal distributions (see Remark~\ref{rem:approximation_lognormal}), its Laplace transform is difficult to compute analytically~\cite{asmussen_laplace_2016,miles_2022}. The second one is that we need the existence of $f\in L^1\left(\mathbb{R}_+\right)$ such that  $\mathcal{L}\left(\widetilde{n}_{\partial}^{(N,\alpha)}\right)\left(q_N(p)\right) = \mathcal{L}\left(f\right)(p)$ to have the convergence of the Gaver-Stehfest algorithm, see~\cite[Theorem~$1.1$]{kuznetsov_convergence_2013}, and we do not know if this is the case here. To solve these issues, our idea here is to use the density of Gamma distributions~(see~\eqref{eq:density_gamma_distribution}) to approximate the Dirac measures in $\frac{1}{n_d}\sum_{i = 1}^{n_d} \frac{1}{t\alpha}\delta_{T_{i}^{(N)}}$. Indeed, the Laplace transform of the density of a Gamma distribution is explicit in function of its parameters~$(\ell,\beta)$, and is equal to~$p\in(-\beta,+\infty) \mapsto \left(1+\frac{p}{\beta}\right)^{-k}$ in view of~\cite[p.$109$]{forbes_statistical_2011}. In addition, the composition this Laplace transform with the polynomial~$q_N$ defined in~\eqref{eq:definition_qN} corresponds to the inverse of a polynomial, so admits an inverse Laplace transform, see~\cite[Section~$5.1$]{cohen_numerical_2007}. The parameters we take for the Gamma distributions to approximate the Dirac measures are on the form described below, for all $a >0$ and $\alpha\in\left(0,\log(2)^{\frac{1}{2}}\right)$,
\begin{equation}\label{eq:parameters_gamma_same_mean}
\left(\ell^*(\alpha),\beta^*(a,\alpha)\right) := \left(\left(e^{\alpha^2}-1\right)^{-1},e^{-a-\frac{\alpha^2}{2}}\left(e^{\alpha^2}-1\right)^{-1}\right).
\end{equation}
The reason is that the Laplace transform of a Gamma distribution with such parameters is very close to the one of a log-normal distribution with parameters $\left(a,\alpha\right)$, as illustrated in Figures~\ref{fig:comparaison_Laplace_absolute_errors} and~\ref{fig:comparaison_Laplace_relative_errors}. Intuitively, this comes from the fact that these distributions are unimodal and have the same mean and coefficient of variation, see \cite[p.109]{forbes_statistical_2011} and~\cite[p.132]{forbes_statistical_2011}. By replacing the densities of log-normal distributions in~\eqref{eq:estimation_cemetery_lognormal} with the densities of Gamma distributions in view of~\eqref{eq:density_gamma_distribution}, we obtain the following estimator of $n_{\partial}^{(N)}$
\begin{equation}\label{eq:estimation_cemetery_gamma}
\begin{aligned}
\forall t\geq0:\hspace{2mm}\overline{n}_{\partial}^{(N,\alpha)}\left(t\right) &:= \frac{1}{n_d}\sum_{i = 1}^{n_d} h_{\ell^*\left(\alpha\right),\beta^*\left(\log\left(T_i^{(N)}\right),\alpha\right)}(t)\\
&= \frac{t^{\ell^{*}(\alpha)-1}}{\Gamma\left(\ell^{*}(\alpha)\right)}\frac{1}{n_d}\sum_{i = 1}^{n_d}\left[\frac{\ell^{*}(\alpha)e^{-\frac{\alpha^2}{2}}}{T_i}\right]^{\ell^{*}(\alpha)}\exp\left[-\frac{\ell^{*}(\alpha)e^{-\frac{\alpha^2}{2}}}{T_i}t\right].
\end{aligned}
\end{equation}
\begin{rem}
The restriction $\alpha\in\left(0,\log(2)^{\frac{1}{2}}\right)$ in~\eqref{eq:parameters_gamma_same_mean} is important because if we extend the definition of $\ell^*(\alpha)$ for $\alpha \geq \log(2)^{\frac{1}{2}}$, then we will have $\ell^{*}(\alpha)  \leq 1$ for these values. This is a problem because a Gamma distribution with parameters $\left(\ell,\beta\right)\in(0,1]\times\mathbb{R}_+^*$ is not unimodal (see~\cite[p.$109$]{forbes_statistical_2011}), which is important to be a smooth approximation of a Dirac measure.
\end{rem}

We mention that for all $a>0$ and $\alpha\in\left(0,\log(2)^{\frac{1}{2}}\right)$, this is also possible to use a Gamma distribution with parameters 
\begin{equation}\label{eq:other_possible_parameters_gamma_distribution}
\left(\ell^{**}(\alpha),\beta^{**}(a,\alpha)\right) := \left(\ell^{*}(\alpha),e^{-a}\ell^{*}(\alpha)\right)\text{ or }\left(\ell^{***}(\alpha),\beta^{***}(a,\alpha)\right) := \left(\ell^{*}(\alpha),e^{-a}\left(\ell^{*}(\alpha)-1\right)\right)
\end{equation}
to approximate the Dirac measures in $\frac{1}{n_d}\sum_{i = 1}^{n_d} \frac{1}{t\alpha}\delta_{T_{i}^{(N)}}$. 
Indeed, in view of~\cite[p.109 and p.132]{forbes_statistical_2011}, a Gamma distribution with these parameters has the same coefficient of variation as the one of a Log-normal distribution with parameters~$(a,\alpha)$. In addition, as the Gamma distribution with parameters $\left(\ell^{**}(\alpha),\beta^{**}(a,\alpha)\right)$ has a mean of $e^{a}$, and the one with parameters $\left(\ell^{***}(\alpha),\beta^{***}(a,\alpha)\right)$ has a mode of $e^{a}$, taking one of these two parameters for $a = \log(T_i^{(N)})$ and $i\in\llbracket1,n_d\rrbracket$ will give a distribution concentrated around $T_i^{(N)}$, see Figure~\ref{fig:comparaison_densities}. We prefer to use the parameters $\left(\ell^{*}(\alpha),\beta^{*}(a,\alpha)\right)$ because it seems that the Laplace transform of such a Gamma distribution is closer to the one of the log-normal distribution than when it has one of the parameters in~\eqref{eq:other_possible_parameters_gamma_distribution}, see Figures~\ref{fig:comparaison_Laplace_absolute_errors} and~\ref{fig:comparaison_Laplace_relative_errors}. This is important that the Laplace transform of the chosen Gamma distribution is close to the one of the log-normal distribution because our way to choose~$\alpha$ is based on this, see Section~\ref{subsubsect:choice_smoothing_parameter}.

Now, we replace $n_{\partial}^{(N)}$ with~$\overline{n}_{\partial}^{(N,\alpha)}$ in~\eqref{eq:estimator_gaver_stehfest_noise_free}. We obtain that the equivalent of~\eqref{eq:estimator_gaver_stehfest_noise_free} when random variables are observed is the following
\begin{equation}\label{eq:estimator_gaver_stehfest_random_variables}
	\begin{aligned}
		\forall x>0: \hspace{2mm}\overline{n}_0^{(N,K,\alpha)}(x) &= \frac{\log(2)}{x}\sum_{n = 1}^{2K} \frac{(-1)^{n+K}}{K!} \sum_{j = \left\lfloor\frac{n+1}{2}\right\rfloor}^{\min(n,K)} j^{K+1} \binom{K}{j}\binom{2j}{j}\binom{j}{n-j}\\ 
		&\times\left[1 + n\frac{\log(2)}{x} \frac{bm_1}{(bm_1)^2\frac{2N}{bm_2} +q_N\left(n\frac{\log(2)}{x}\right)}\right]\mathcal{L}\left(\overline{n}_{\partial}^{(N,\alpha)}\right)\left(q_N\left(n\frac{\log(2)}{x}\right)\right).
	\end{aligned}
\end{equation}
As before, the parameter $K$ must be as large as the numerical stability allows (see Appendix~\ref{sect:round_off_errors}). The choice of the smoothing parameter, for its part, is discussed in the next section.

\begin{figure}[!htb]
	\centering
	\begin{subfigure}[t]{0.45\textwidth}
		\centering
		\includegraphics[scale = 0.35]{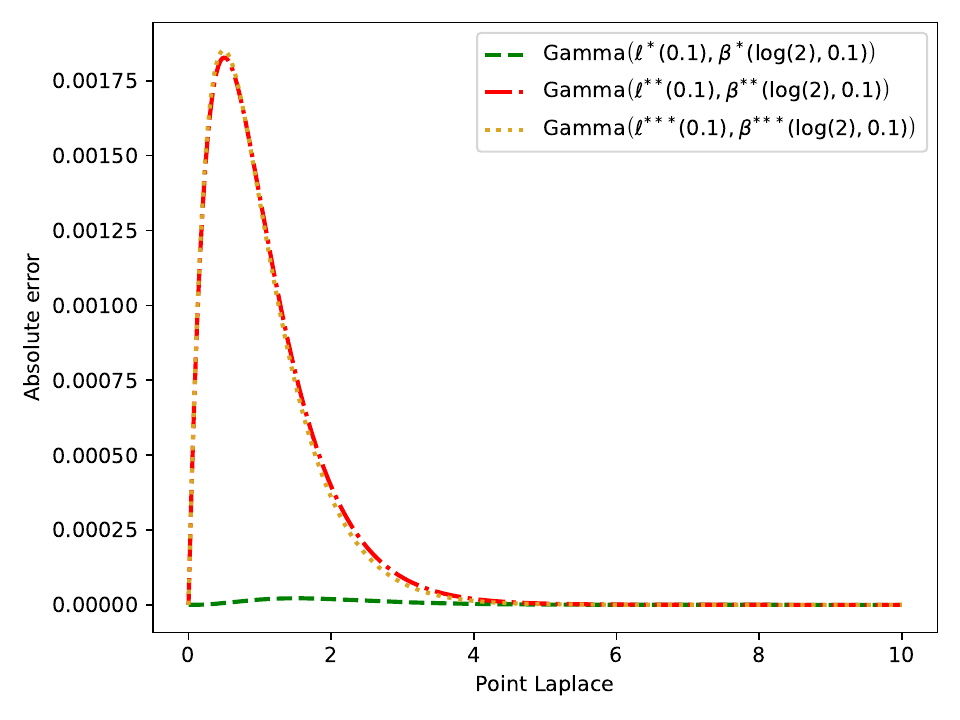}
		\caption{Absolute errors when $\alpha = 0.1$.}
	\end{subfigure}
	\hfill
	\begin{subfigure}[t]{0.45\textwidth}
		\centering
		\includegraphics[scale = 0.35]{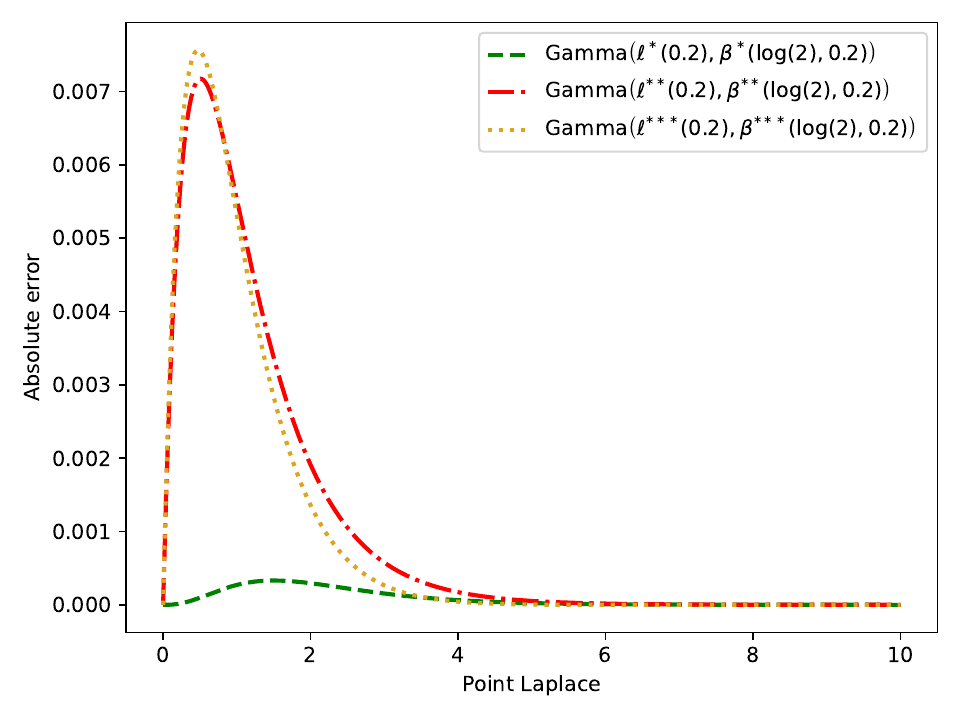}
		\caption{Absolute errors when $\alpha = 0.2$.}
	\end{subfigure}
	\caption{Comparison of the absolute errors between the Laplace transform of a Log-normal distribution with parameters $\left(\log(2),\alpha\right)$ and  the Laplace transform of Gamma distributions with parameters $\left(\ell^{*}(\alpha),\beta^{*}(\log(2),\alpha)\right)$, $\left(\ell^{**}(\alpha),\beta^{**}(\log(2),\alpha)\right)$ and $\left(\ell^{***}(\alpha),\beta^{***}(\log(2),\alpha)\right)$ (see~\eqref{eq:parameters_gamma_same_mean} and~\eqref{eq:other_possible_parameters_gamma_distribution}), when $\alpha\in\{0.1,0.2\}$. }\label{fig:comparaison_Laplace_absolute_errors}
\end{figure}
\begin{figure}[!htb]
	\centering
	\begin{subfigure}[t]{0.49\textwidth}
		\centering
		\includegraphics[scale = 0.35]{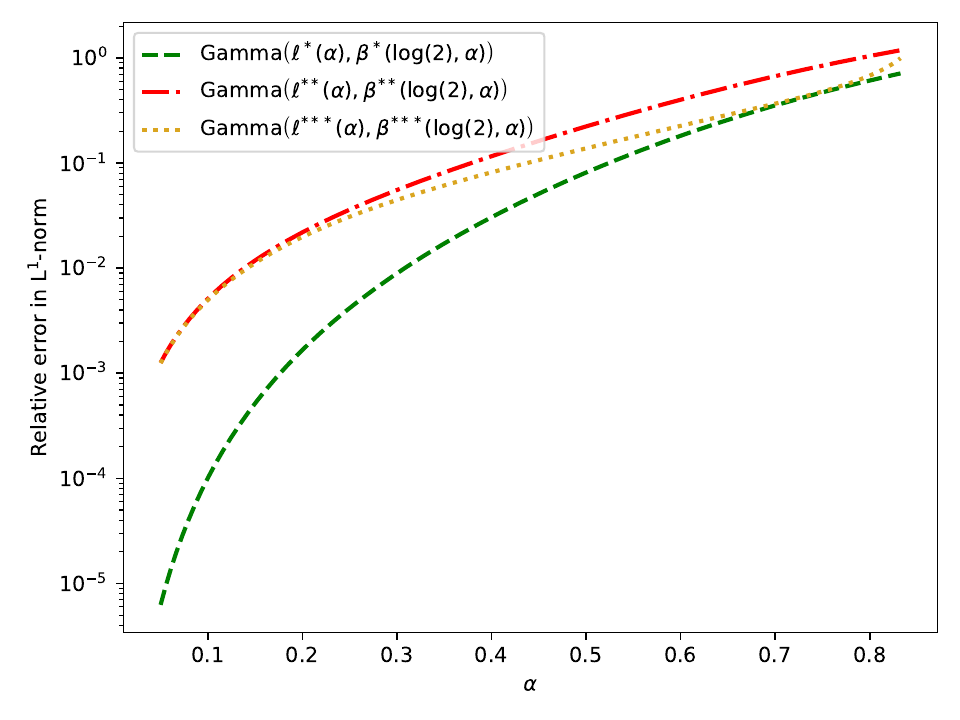}
		\caption{Relative error in $L^1$-norm versus $\alpha$.}
	\end{subfigure}
	\hfill
	\begin{subfigure}[t]{0.49\textwidth}
		\centering
		\includegraphics[scale = 0.35]{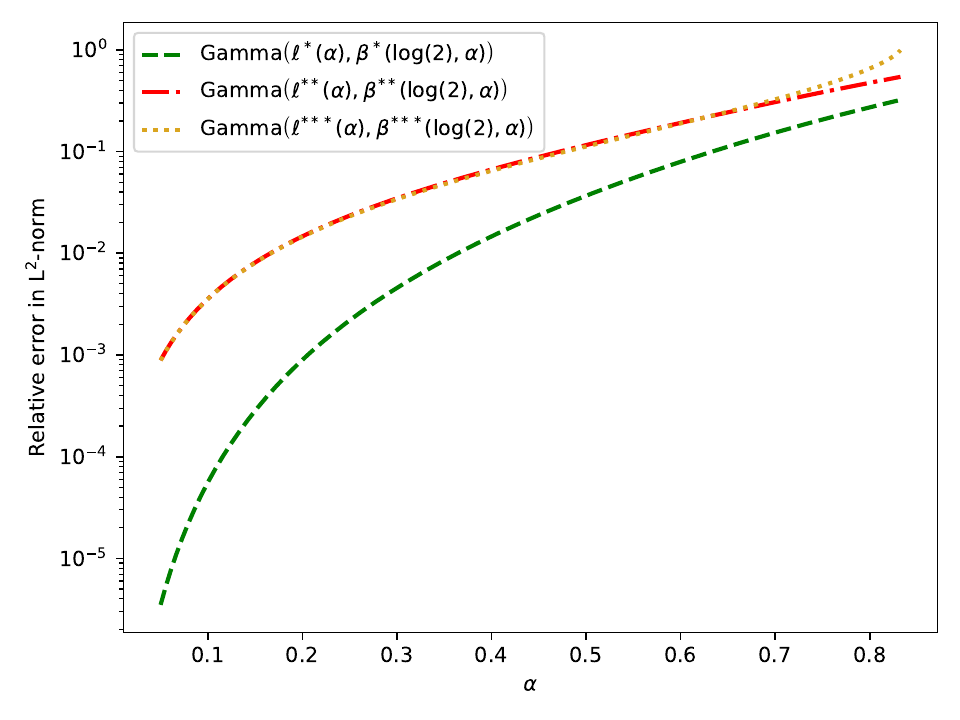}
		\caption{Relative error in $L^2$-norm versus $\alpha$.}
	\end{subfigure}
	\begin{subfigure}[t]{0.49\textwidth}
		\centering
		\includegraphics[scale = 0.35]{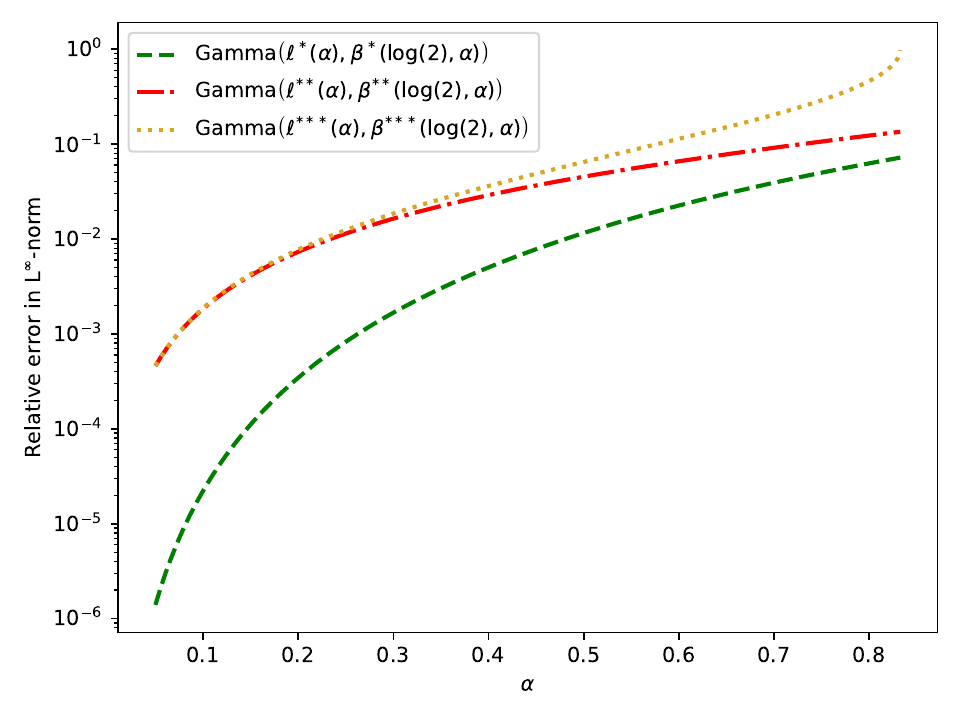}
		\caption{Relative error in $L^{\infty}$-norm versus $\alpha$.}
	\end{subfigure}
	\caption{Relative error between the Laplace transform of a Log-normal distribution with parameters $\left(\log(2),\alpha\right)$ and  the Laplace transform of Gamma distributions with parameters $\left(\ell^{*}(\alpha),\beta^{*}(\log(2),\alpha)\right)$, $\left(\ell^{**}(\alpha),\beta^{**}(\log(2),\alpha)\right)$ and $\left(\ell^{***}(\alpha),\beta^{***}(\log(2),\alpha)\right)$ (see~\eqref{eq:parameters_gamma_same_mean} and~\eqref{eq:other_possible_parameters_gamma_distribution}), for $\alpha\in\left(0,\log(2)^{\frac{1}{2}}\right)$.}\label{fig:comparaison_Laplace_relative_errors}
\end{figure}
\begin{figure}[!htb]
	\centering
	\begin{subfigure}[t]{0.485\textwidth}
		\centering
		\includegraphics[scale = 0.35]{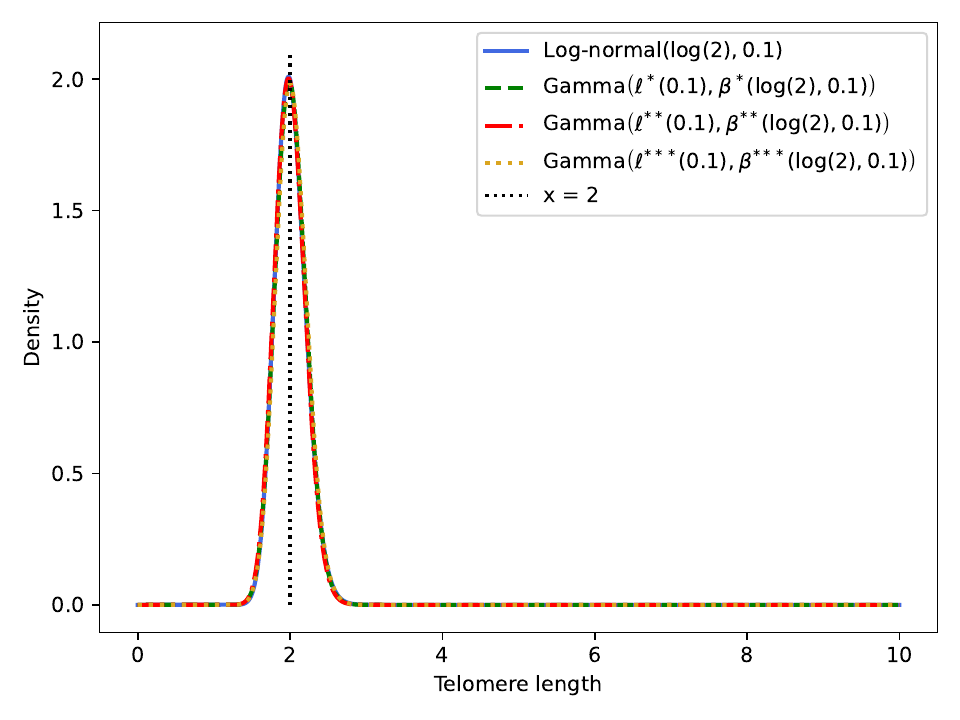}
		\caption{Densities when $\alpha = 0.1$.}
	\end{subfigure}
	\hfill
	\begin{subfigure}[t]{0.485\textwidth}
		\centering
		\includegraphics[scale = 0.35]{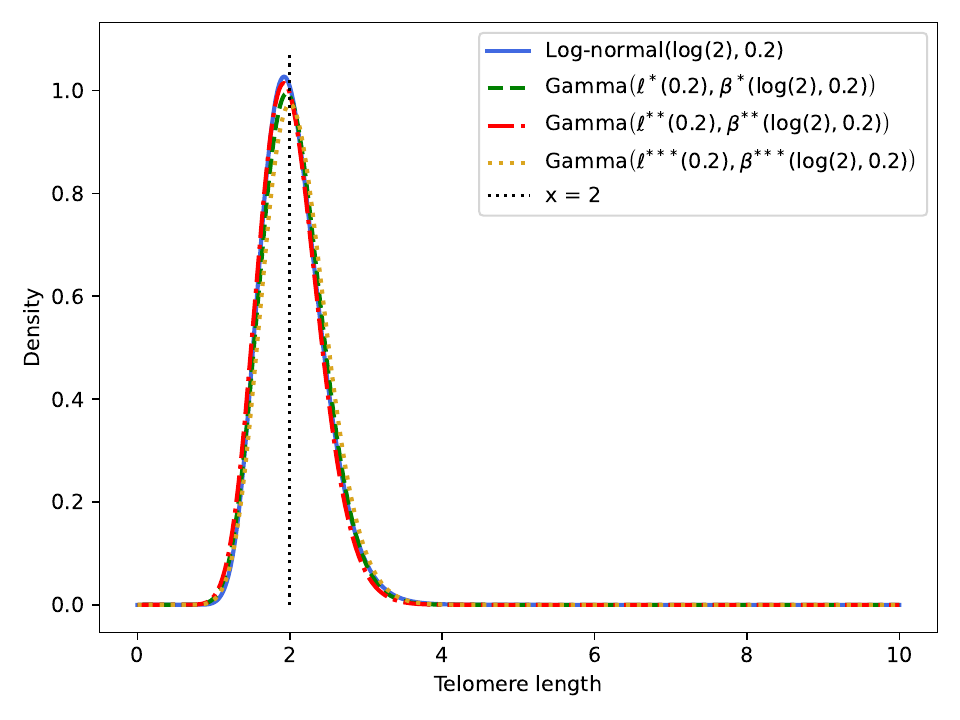}
		\caption{Densities when $\alpha = 0.2$.}
	\end{subfigure}
	\caption{Comparison of the density of a Log-normal distribution with parameters $\left(\log(2),\alpha\right)$ with the densities of Gamma distributions with parameters $\left(\ell^{*}(\alpha),\beta^{*}(\log(2),\alpha)\right)$, $\left(\ell^{**}(\alpha),\beta^{**}(\log(2),\alpha)\right)$ and $\left(\ell^{***}(\alpha),\beta^{***}(\log(2),\alpha)\right)$ (see~\eqref{eq:parameters_gamma_same_mean} and~\eqref{eq:other_possible_parameters_gamma_distribution}), for $\alpha\in\{0.1,0.2\}$.}\label{fig:comparaison_densities}
\end{figure}

\subsubsection{Choice of the smoothing parameter}\label{subsubsect:choice_smoothing_parameter}

To choose the smoothing parameter $\alpha$, we would like to take a parameter that minimises the error between~$\mathcal{L}\left(\overline{n}_{\partial}^{(N)}\right)$ and $\mathcal{L}\left(n_{\partial}^{(N)}\right)$. In view of the following inequality
$$
\begin{aligned}
\forall x\in\mathbb{R}_+:\hspace{2mm}\left|\mathcal{L}\left(\overline{n}_{\partial}^{(N)}\right)(x) - \mathcal{L}\left(n_{\partial}^{(N)}\right)(x)\right| &\leq \int_0^{\infty} \left|\overline{n}_{\partial}^{(N)}(s) - n_{\partial}^{(N)}(s)\right|e^{-xs} \dd s \\
&\leq \left|\left|\overline{n}_{\partial}^{(N)} - n_{\partial}^{(N)}\right|\right|_{L^1\left(\mathbb{R}_+\right)},
\end{aligned}
$$
choosing a parameter which minimises the error in $L^1$-norm between $\overline{n}_{\partial}^{(N)}$ and $n_{\partial}^{(N)}$ seems a good idea to control the error between $\mathcal{L}\left(\overline{n}_{\partial}^{(N)}\right)$ and $\mathcal{L}\left(n_{\partial}^{(N)}\right)$. However, for the moment, we are only able to control $\left|\left|\text{Id}\left[\overline{n}_{\partial}^{(N)}-n_{\partial}^{(N)}\right]\right|\right|_{L^\infty\left(\mathbb{R}_+\right)}$ (by proceeding as in the proof of~\cite[Proposition~$5.2$]{olaye_transport_2026}), which is not sufficient to obtain a precise upper bound on $\left|\left|\overline{n}_{\partial}^{(N)}-n_{\partial}^{(N)}\right|\right|_{L^1\left(\mathbb{R}_+\right)}$. In addition, even if we are able to obtain a result similar to~\cite[Proposition~$5.2$]{olaye_transport_2026}  which provides a parameter allowing us to control~$\left|\left|\overline{n}_{\partial}^{(N)}-n_{\partial}^{(N)}\right|\right|_{L^1\left(\mathbb{R}_+\right)}$, we have a problem because this parameter will certainly depend on $n_{\partial}^{(N)}$, that we do not know in practice when we want to proceed to the estimation (we only have random variables distributed according to it). We thus need to choose~$\alpha$ in another~way.

The estimator of $n_{\partial}^{(N)}$ defined in~\eqref{eq:estimation_cemetery_gamma} has been chosen in order to have a Laplace transform very close to the one of the one defined in~\eqref{eq:estimation_cemetery_lognormal}, see Figures~\ref{fig:comparaison_Laplace_absolute_errors} and~\ref{fig:comparaison_Laplace_relative_errors}, which corresponds to the log-transform density estimator~\cite{charpentier_2015,nguyen_positive_2019}. Thus, we have the intuition that choosing $\alpha >0$ such that the log-transform estimator $\widetilde{n}_{\partial}^{(N,\alpha)}$ in~\eqref{eq:estimation_cemetery_lognormal} estimates well $n_{\partial}^{(N)}$ will allow us to have a good estimation of $\mathcal{L}\left(n_\partial\right)$ from $\mathcal{L}\left(\overline{n}_\partial^{(N,\alpha)}\right)$. This is what we do in Section~\ref{subsubsect:estimation_results_random_variables} to choose~$\alpha$. In each estimation, we choose one of the methods available in the package R \textit{logKDE}~\cite{nguyen_positive_2019} to estimate the smoothing parameter for the log-transform kernel density estimator $\widetilde{n}_{\partial}^{(N,\alpha)}$. The method of this package we take depends on the number of observed data, and is specified at each estimation. Then, we use the smoothing estimator estimated to compute $\overline{n}_{\partial}^{(N,\alpha)}$ and after~$\overline{n}_0^{(N,K,\alpha)}$, see~\eqref{eq:estimator_gaver_stehfest_random_variables}. The estimation results presented in Section~\ref{subsubsect:estimation_results_random_variables} are quite satisfactory, so this way to choose $\alpha$ seems accurate. However, in a future work, a more rigorous study must be done to develop a more robust method to choose $\alpha$.

\subsubsection{Estimation results when random variables are observed}\label{subsubsect:estimation_results_random_variables}
	
We now check that the estimator defined in~\eqref{eq:estimator_gaver_stehfest_random_variables} has good estimation results for the choice of smoothing parameter explained in Section~\ref{subsubsect:choice_smoothing_parameter}. To do so, we fix for model parameters~\hbox{$N = 40$}, $b=1$ and \hbox{$g = 1_{[0,1]}$}, as in Section~\ref{subsect:estimation_noise_free_data}. Then, for $n_0\in\left\{h_{9,12},h_{16,16},h_{25,30},h_{49,50}\right\}$, we simulate a sequence of random variables $\left(T_i\right)_{1\leq i \leq n_d}$ for $n_d = 300$ by simulating a probabilistic telomere shortening model, see~\cite[Section~$3$.C]{olaye_thesis_2025}. Thereafter, we use these random variables to compute~$\mathcal{L}\left(\overline{n}^{(N,\alpha)}_{\partial}\right)$ for $\alpha$ chosen with the method \textit{SJ-ste} of the package \textit{logKDE} of~R, which provides a good bandwidth selection for a moderate or large number of data (see~\cite{sheather_1991} for an explanation of the method). Finally, we estimate $n_0$ with the estimator~$\overline{n}_{0}^{(N,K,\alpha)}$ defined in~\eqref{eq:estimator_gaver_stehfest_random_variables}. The parameter $K$ has been chosen manually, by taking the largest possible value of $K$ for which we do not observe round-off errors. We refer to Appendix~\ref{sect:round_off_errors} to see why these round-off errors can lead to bad estimation, and why the value of $K$ chosen in this section is not of the same order as in Sections~\ref{subsubsect:estimation_results_noise_free} and~\ref{subsubsect:difficulties_small_variability}. We plot in Figure~\ref{fig:estimation_gaver_stehfest_random_variables} the estimated curves (in red), and compare them with their associated theoretical distribution (in black). We also plot on these figures the estimations obtained with the estimator constructed in~\cite[Section~$5.2.1$]{olaye_transport_2026} (in blue), defined for all $\alpha \in \left(0,\log(2)^{\frac{1}{2}}\right)$ and $x>0$ as
\begin{equation}\label{eq:estimator_n0_previous_article}
\overline{n}_{0}^{(\text{old},N,\alpha)}(x) := \frac{1}{bm_1}\widetilde{n}_{\partial}^{(N,\alpha)}\left(\frac{x}{bm_1}\right) = \frac{1}{n_d}\sum_{i = 1}^{n_d}\frac{1}{x}\rho_{\alpha}\left(\log\left(\frac{x}{bm_1T_i}\right)\right).
\end{equation}
The smoothing parameter in the above is chosen with the same method as the one for $\overline{n}_{0}^{(N,K,\alpha)}$. We observe in Figure~\ref{fig:estimation_gaver_stehfest_random_variables} that the estimations done with~$\overline{n}_{0}^{(N,K,\alpha)}$ are better than the ones with~$\overline{n}_{0}^{(\text{old},N,\alpha)}$. Specifically, the diffusivity of the curve of $n_0$, which is not well-estimated with $\overline{n}_{0}^{(\text{old},N,\alpha)}$, is now better estimated with $\overline{n}_{0}^{(N,K,\alpha)}$. We have thus improved our estimation method. We however mention that despite this improvement, the estimations with $\overline{n}_{0}^{(N,K,\alpha)}$ are clearly not as satisfying as the estimations of Figure~\ref{fig:estimation_gaver_stehfest_noisefree}, which were perfect. It thus seems that the estimation error is more impacted by the noise on $n_{\partial}^{(N)}$, than by error between $n_{\partial}^{(N)}$ and~$u_{\partial}^{(N)}$. 

The estimation results presented in Figure~\ref{fig:estimation_gaver_stehfest_random_variables} are done with $n_d = 300$, which corresponds to a moderate number of data. In practice, we do not have necessarily a moderate number of data, so this is interesting to study how our estimator acts when we have a small or high number of data. To do this, we apply the same protocol as in the previous paragraph in the case where~$n_d \in\{30,3000\}$ , and for only $n_0\in\{h_{9,12},h_{25,30},h_{49,50}\}$. The only difference is that we use the method \textit{nrd} of the package \textit{logKDE} of~R to choose $\alpha$ when $n_d = 30$ (corresponds to the method presented in~\cite[Eq.~$\left(3.28\right)$]{silverman_density_2018}), instead of the method \textit{SJ-ste}. The reason is that the \textit{nrd} method is based on simple statistics, so is more robust when the number of observations is limited. We plot in Figure~\ref{fig:estimation_gaver_stehfest_variation_number_simulations} the estimations we have obtained. We observe in Figures~\ref{fig:estimation_gaver_large_number_data_second} and~\ref{fig:estimation_gaver_large_number_data_third} that when $n_d = 30$, the estimations obtained with~$\overline{n}_{0}^{(N,K,\alpha)}$ are slightly better than the ones obtained with $\overline{n}_{0}^{(\text{old},N,\alpha)}$. The improvement is far from exceptional, but this is expected since we have a very small number of data when $n_d = 30$. Hence, our new estimator remains useful for a small number of data, but has no incredible results. When $n_d = 3000$,  which corresponds to the estimations presented in the second line of Figure~\ref{fig:estimation_gaver_stehfest_variation_number_simulations}, the estimations done with $\overline{n}_{0}^{(N,K,\alpha)}$ are better than in Figure~\ref{fig:estimation_gaver_stehfest_random_variables}. Notably, the red and blue curves are almost superposed in Figures~\ref{fig:estimation_gaver_large_number_data_first} and~\ref{fig:estimation_gaver_large_number_data_second}. However, one can observe in Figure~\ref{fig:estimation_gaver_large_number_data_third} that even with a large number of data, the estimations obtained with $\overline{n}_0^{(N,K,\alpha)}$ are not totally perfect, in comparison to the one in Figure~\ref{fig:estimation_gaver_stehfest_noisefree}. Our estimation method can therefore still be improved, despite the good estimation results we have obtained for moderate or large amounts of data. \needspace{2\baselineskip}

\begin{figure}[!htb]
	\centering
	\begin{subfigure}[t]{0.485\textwidth}
		\centering
		\includegraphics[scale = 0.34]{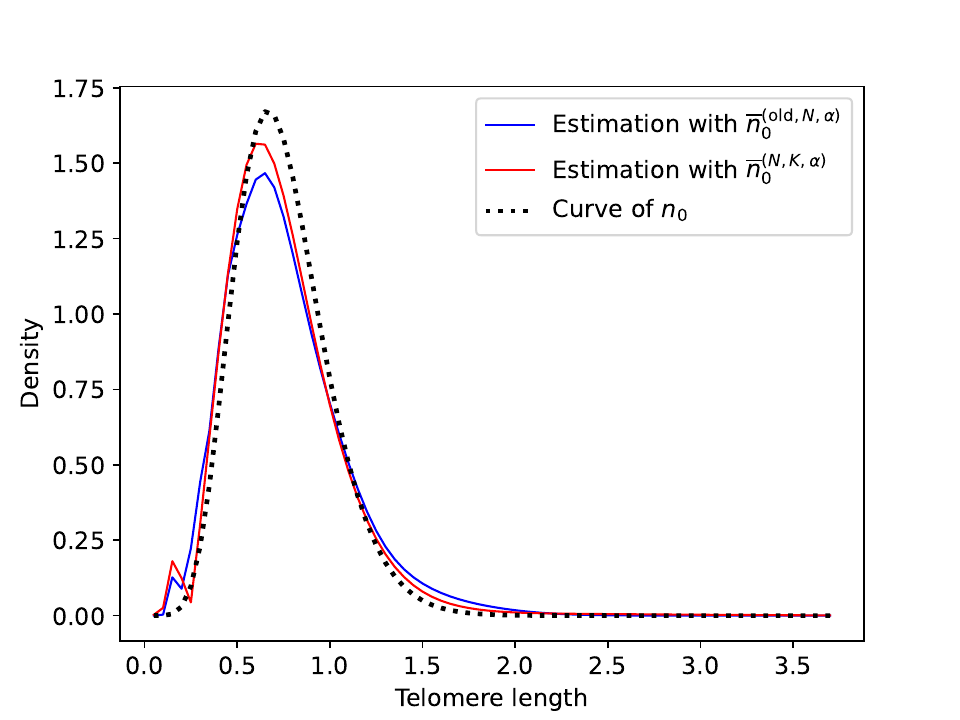}
		\caption{Estimation for $n_0 = h_{9,12}$ and $K = 8$.}\label{fig:estimation_gaver_stehfest_random_variables_intermediate_first}
	\end{subfigure}
	\hfill
	\begin{subfigure}[t]{0.485\textwidth}
		\centering
		\includegraphics[scale = 0.34]{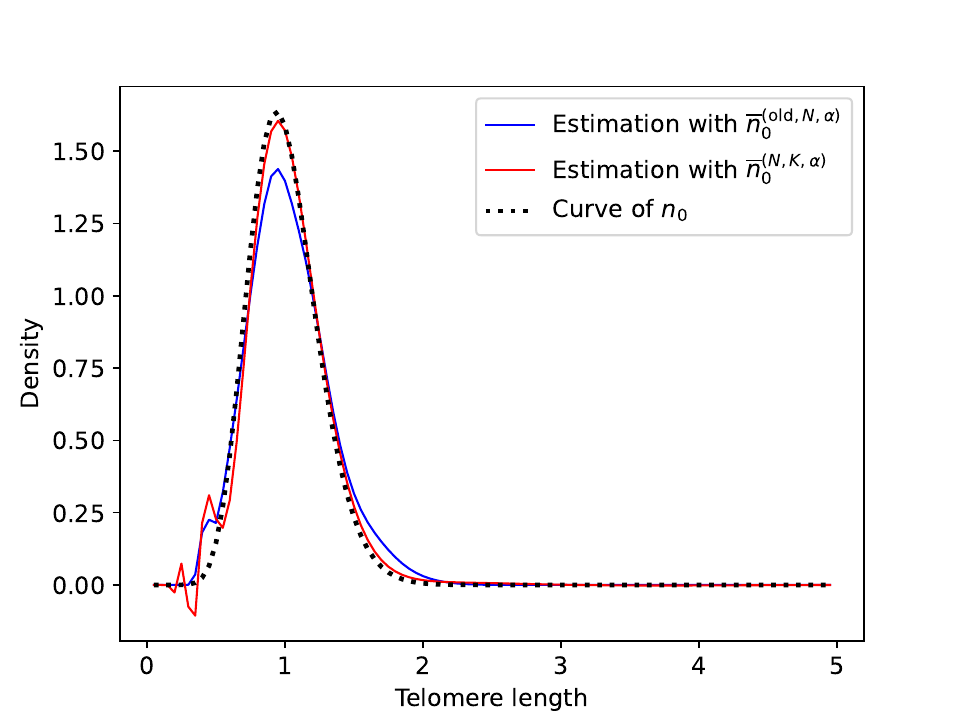}
		\caption{Estimation for $n_0 = h_{16,16}$ and $K = 11$.}\label{fig:estimation_gaver_stehfest_random_variables_intermediate_second}
	\end{subfigure}
	\begin{subfigure}[t]{0.485\textwidth}
		\centering
		\includegraphics[scale = 0.34]{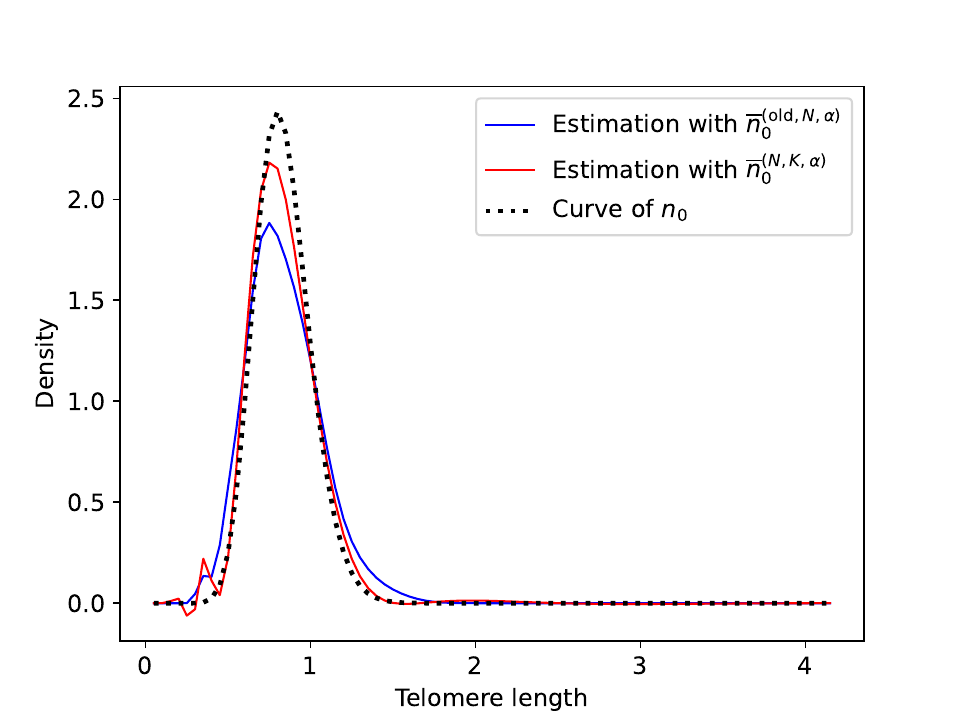}
		\caption{Estimation for $n_0 = h_{25,30}$ and $K = 11$.}\label{fig:estimation_gaver_stehfest_random_variables_intermediate_third}
	\end{subfigure}
	\hfill
	\begin{subfigure}[t]{0.485\textwidth}
		\centering
		\includegraphics[scale = 0.34]{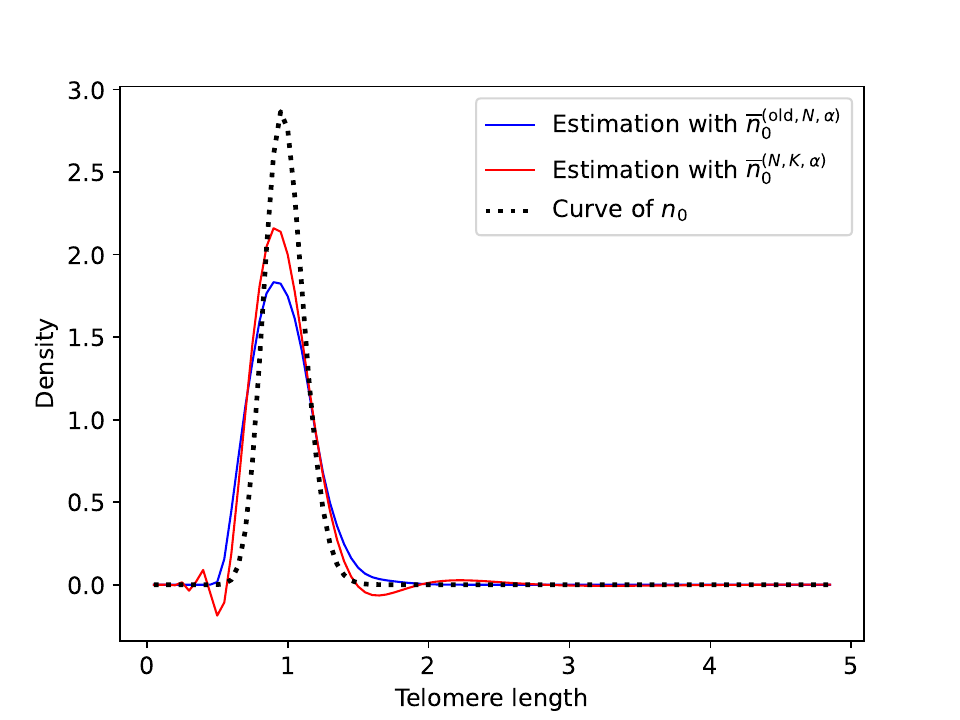}
		\caption{Estimation for $n_0 = h_{49,50}$ and $K = 11$.}\label{fig:estimation_gaver_stehfest_random_variables_intermediate_fourth}
	\end{subfigure}
	\caption{Estimation results with the estimator $\overline{n}_{0}^{(N,K,\alpha)}$ defined in~\eqref{eq:estimator_gaver_stehfest_random_variables} when $b = 1$, $g = 1_{[0,1]}$, $N = 40$, $n_d = 300$, for $\alpha$ chosen with the method \textit{SJ-ste} of the package \textit{logKDE} of R, and for Gamma initial distributions. Comparison with the results obtained with the estimator~$\overline{n}_0^{(\text{old},N,\alpha)}$ defined in~\eqref{eq:estimator_n0_previous_article} and constructed in~\cite{olaye_transport_2026}, for the same value of $\alpha$. \textit{For plotting the curves, the numerical precision of the computations was set to $200$ digits.}}\label{fig:estimation_gaver_stehfest_random_variables}
\end{figure}

We finally conclude this numerical study by testing our estimation method on other initial distributions than Gamma distributions. To do this, we apply the same protocol as the one used for the estimations in Figure~\ref{fig:estimation_gaver_stehfest_random_variables}. The only thing that changes is that we consider the following three initial distributions to do our estimation, for all $x\geq0$:
\begin{equation}\label{eq:definition_other_initial_distributions}
\begin{aligned}
&n_{0,1}(x) = \frac{11}{2}\left(\frac{x}{2}\right)^{10}\exp\left[-\left(\frac{x}{2}\right)^{11}\right], \hspace{7.5mm}n_{0,2}(x) = 2\frac{3^6}{5!}x^{11}\exp\left(-\frac{3x^2}{2}\right) , \\
&\hspace{35mm}n_{0,3}(x) = \frac{1}{2}\left[h_{8,8}(x) + h_{11,3}(x)\right].
\end{aligned}
\end{equation}
In the above, $n_{0,1}$ corresponds to the density of a Weibull distribution with parameters $\left(11,2\right)$, and~$n_{0,2}$ corresponds to the density of a Nakagami distribution with parameters $\left(6,2\right)$. These distributions are interesting to consider since they verify the assumptions of Theorem~\ref{te:model_approximation} and Proposition~\ref{prop:link_laplace_transforms}, and since they are unimodal, which is biologically relevant. In addition, $n_{0,3}$ corresponds to the density of a mixture of Gamma distribution with parameters $(8,8)$ with another Gamma distribution with parameters~$(11,3)$, so is the density of a bimodal distribution. This type of distribution is not biologically relevant, but interesting to consider to know if our method can be adapted to other contexts. We plot in Figure~\ref{fig:estimation_gaver_stehfest_other_distributions} the estimation results we have obtained with the distributions defined in~\eqref{eq:definition_other_initial_distributions}. We observe in Figures~\ref{fig:estimation_gaver_stehfest_other_distributions_first} and~\ref{fig:estimation_gaver_stehfest_other_distributions_second} that the estimations obtained with $\overline{n}_0^{(N,K,\alpha)}$ are better than the ones obtained with $\overline{n}_0^{(\text{old},N,\alpha)}$, since the diffusivity of the curve is now better captured. We also observe in Figure~\ref{fig:estimation_gaver_stehfest_other_distributions_third} that the red curve follows the black curve more precisely than the blue curve, particularly in the modes of the distribution. We thus conclude that the estimator we have constructed in this work improves the estimations done with~the estimator constructed in~\cite{olaye_transport_2026}, even in a general context. 

\begin{figure}[!htb]
	\centering
	\begin{subfigure}[t]{0.325\textwidth}
		\centering
		\includegraphics[scale = 0.325]{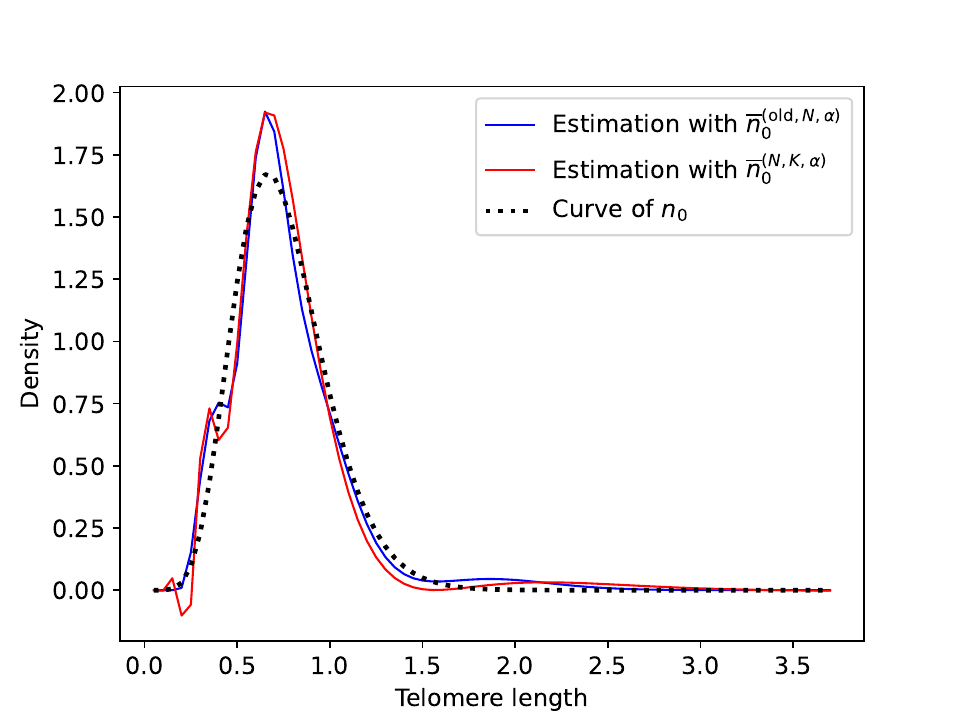}
		\caption{Estimation for $n_d = 30$, $n_0 = h_{9,12}$ and $K = 8$.}\label{fig:estimation_gaver_small_number_data_first}
	\end{subfigure}
	\hfill
	\begin{subfigure}[t]{0.325\textwidth}
		\centering
		\includegraphics[scale = 0.325]{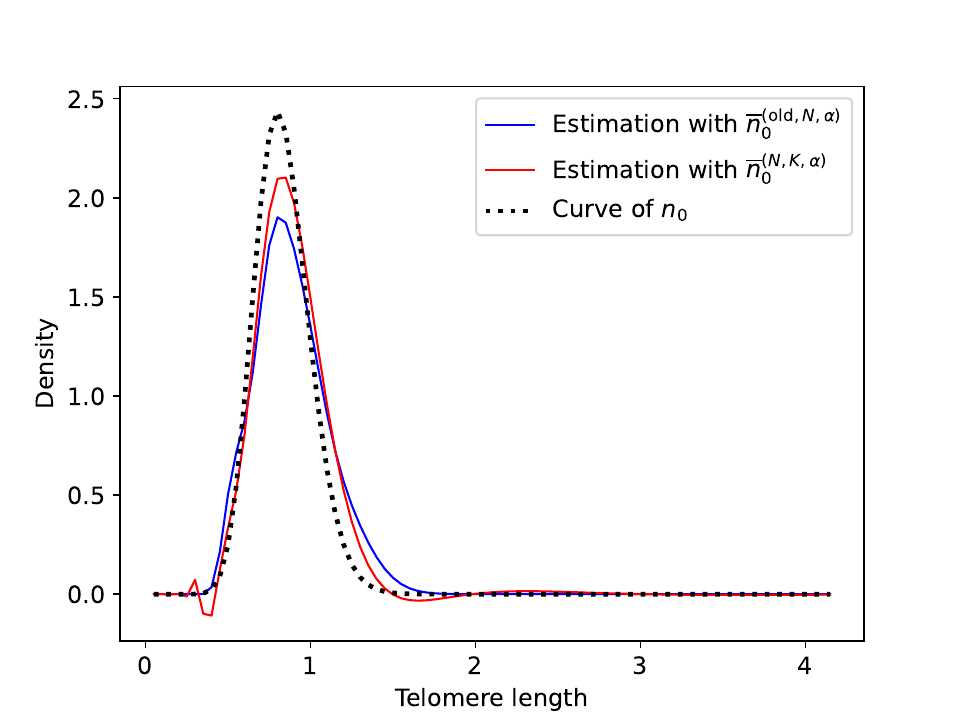}
		\caption{Estimation for $n_d = 30$, $n_0 = h_{25,30}$ and $K = 10$.}\label{fig:estimation_gaver_small_number_data_second}
	\end{subfigure}
	\hfill
	\begin{subfigure}[t]{0.325\textwidth}
		\centering
		\includegraphics[scale = 0.325]{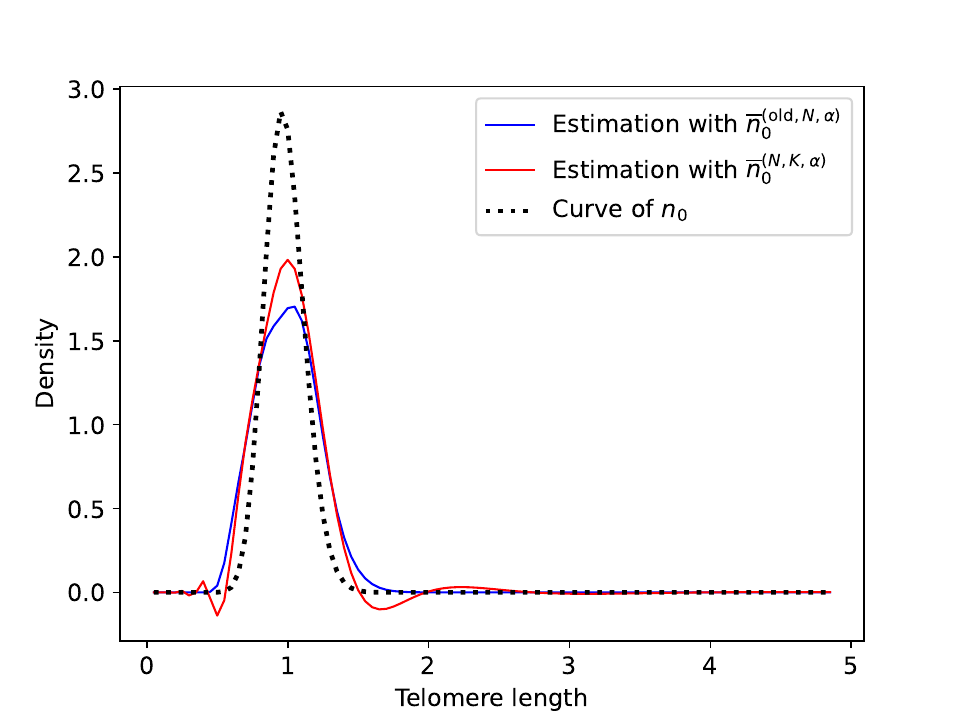}
		\caption{Estimation for $n_d = 30$, $n_0 = h_{49,50}$ and $K = 12$.}\label{fig:estimation_gaver_small_number_data_third}
	\end{subfigure}
	\begin{subfigure}[t]{0.325\textwidth}
		\centering
		\includegraphics[scale = 0.325]{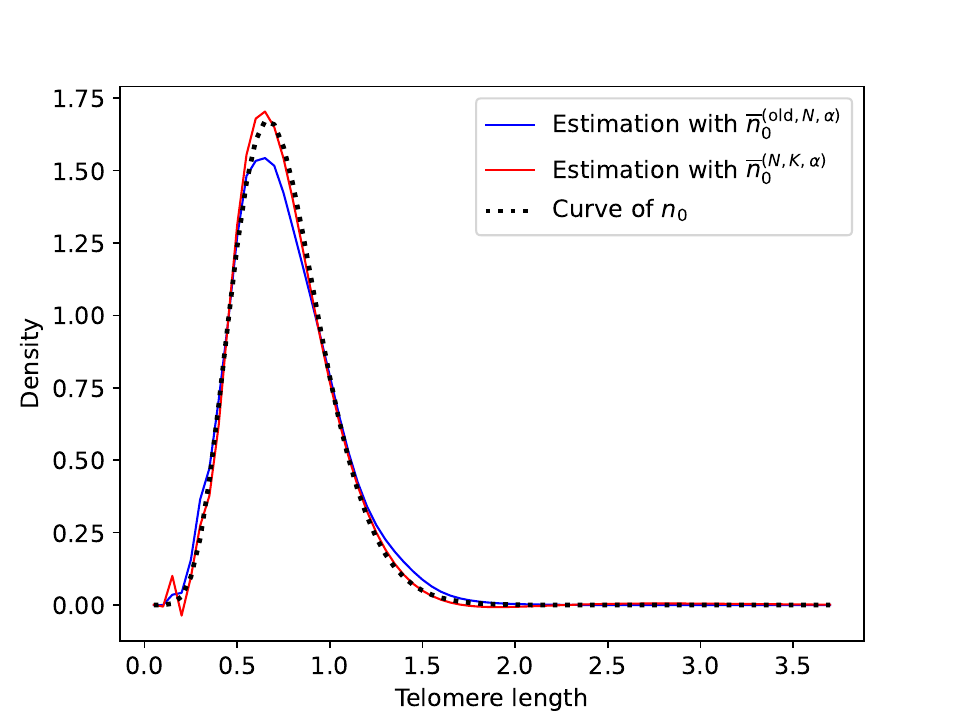}
		\caption{Estimation for $n_d = 3000$, $n_0 = h_{9,12}$ and $K = 8$.}\label{fig:estimation_gaver_large_number_data_first}
	\end{subfigure}
	\hfill
	\begin{subfigure}[t]{0.325\textwidth}
		\centering
		\includegraphics[scale = 0.325]{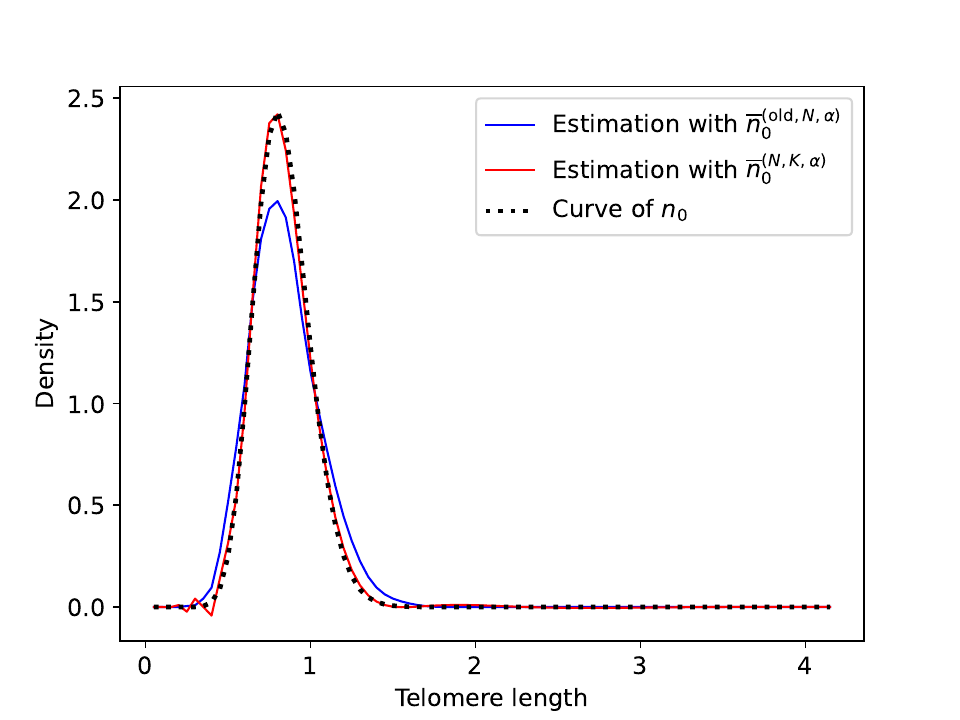}
		\caption{Estimation for $n_d = 3000$, $n_0 = h_{25,30}$ and $K = 12$.}\label{fig:estimation_gaver_large_number_data_second}
	\end{subfigure}
	\hfill
	\begin{subfigure}[t]{0.325\textwidth}
		\centering
		\includegraphics[scale = 0.325]{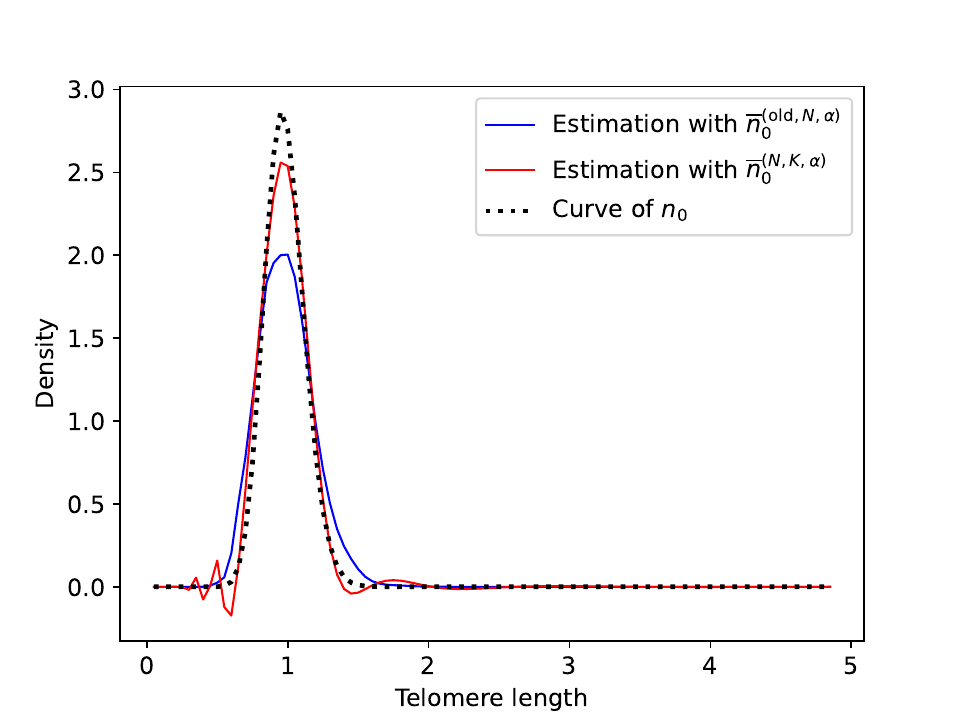}
		\caption{Estimation for $n_d = 3000$, $n_0 = h_{49,50}$ and $K = 16$.}\label{fig:estimation_gaver_large_number_data_third}
	\end{subfigure}
	\caption{Estimation results with the estimator $\overline{n}_0^{(N,K,\alpha)}$ defined in~\eqref{eq:estimator_gaver_stehfest_random_variables} when $b = 1$, $g = 1_{[0,1]}$, $N = 40$, $n_d\in\{30,3000\}$, for $\alpha$ chosen with the method \textit{nrd} (at the top) or \textit{SJ-ste} (at the bottom) of the package \textit{logKDE} of R, and for Gamma initial distributions. Comparison with the results obtained with~$\overline{n}_0^{(\text{old},N,\alpha)}$ defined in~\eqref{eq:estimator_n0_previous_article} and constructed in~\cite{olaye_transport_2026}, for the same value of $\alpha$. \textit{For plotting the curves, the numerical precision of the computations was set to $200$ digits.}}\label{fig:estimation_gaver_stehfest_variation_number_simulations}
\end{figure}
\begin{figure}[!htb]
	\centering
	\begin{subfigure}[t]{0.325\textwidth}
		\centering
		\includegraphics[scale = 0.325]{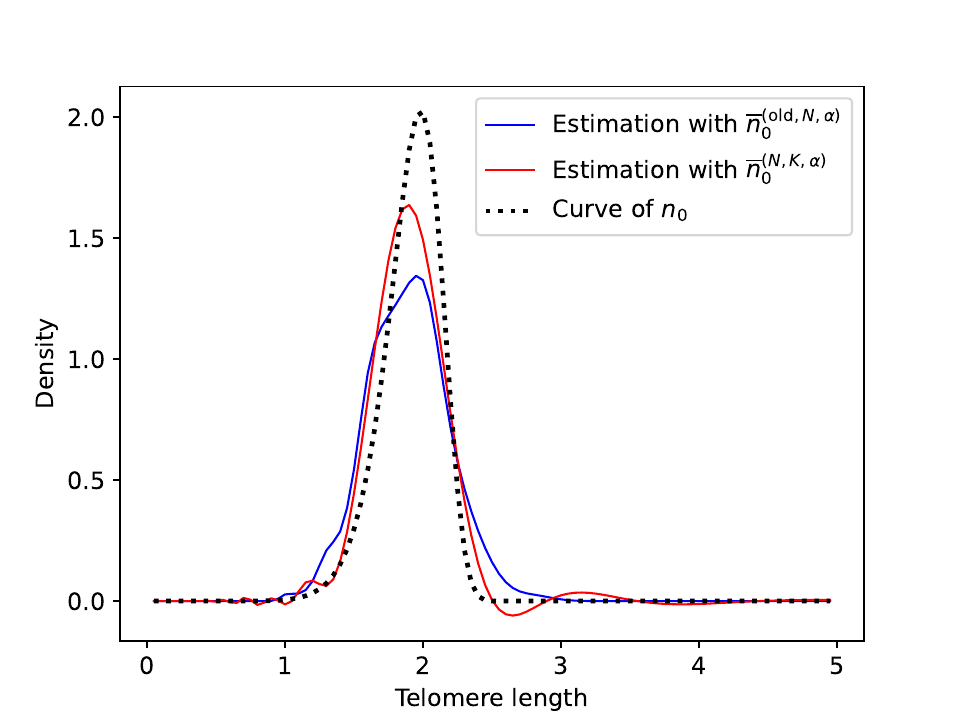}
		\caption{Estimation for $n_0 = n_{0,1}$ and $K = 18$.}\label{fig:estimation_gaver_stehfest_other_distributions_first}
	\end{subfigure}
	\hfill
	\begin{subfigure}[t]{0.325\textwidth}
		\centering
		\includegraphics[scale = 0.325]{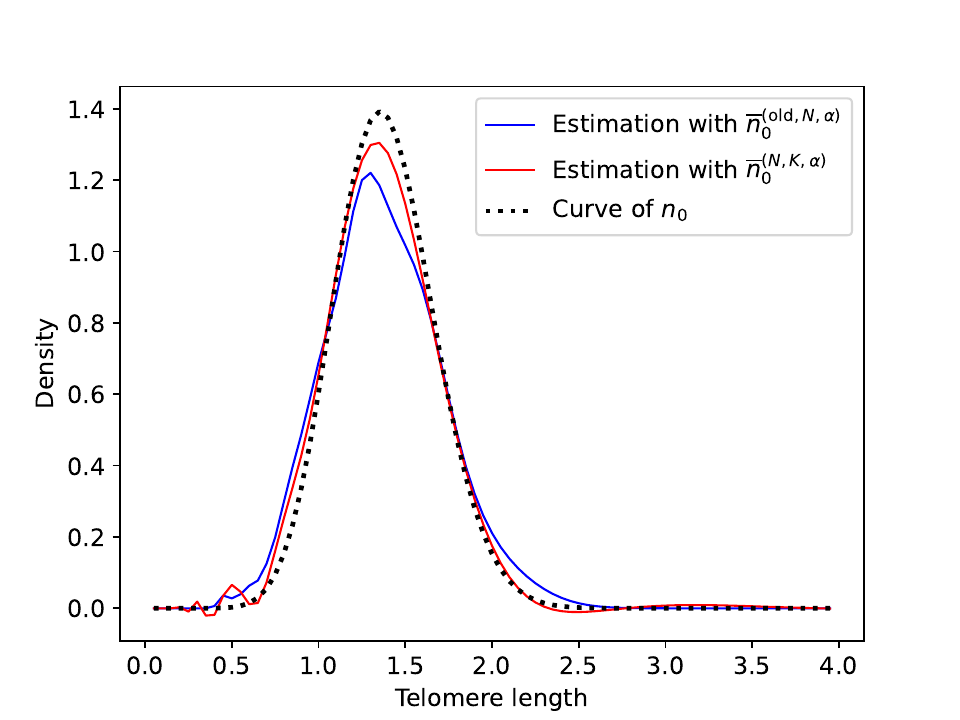}
		\caption{Estimation for $n_0 = n_{0,2}$ and $K = 12$.}\label{fig:estimation_gaver_stehfest_other_distributions_second}
	\end{subfigure}
	\hfill
	\begin{subfigure}[t]{0.325\textwidth}
		\centering
		\includegraphics[scale = 0.325]{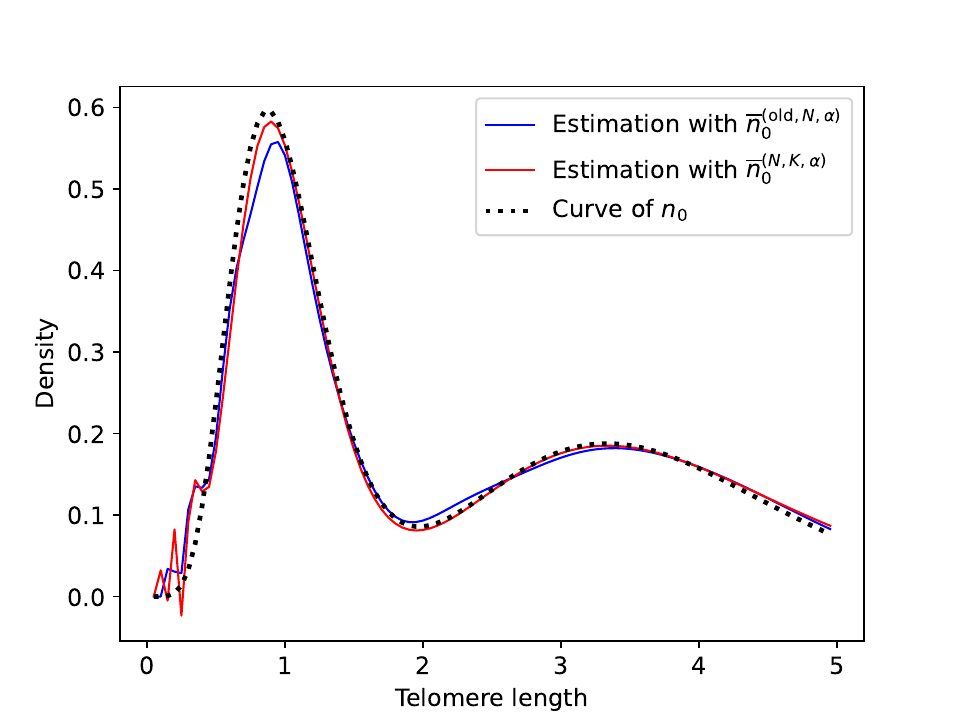}
		\caption{Estimation for $n_0 = n_{0,3}$ and $K = 8$.}\label{fig:estimation_gaver_stehfest_other_distributions_third}
	\end{subfigure}
	\caption{Estimation results with the estimator $\overline{n}_0^{(N,K,\alpha)}$ defined in~\eqref{eq:estimator_gaver_stehfest_random_variables} when $b = 1$, \hbox{$g = 1_{[0,1]}$}, $N = 40$, $n_d = 300$, for $\alpha$ chosen with the method \textit{SJ-ste} of the package \textit{logKDE} of R, for the initial distributions defined in~\eqref{eq:definition_other_initial_distributions}. Comparison with the results obtained with the estimator~$\overline{n}_0^{(\text{old},N,\alpha)}$ defined in~\eqref{eq:estimator_n0_previous_article} and constructed in~\cite{olaye_transport_2026}, for the same value of $\alpha$. \textit{For plotting the curves, the numerical precision of the computations was set to $200$ digits.}}\label{fig:estimation_gaver_stehfest_other_distributions}
\end{figure}

\section{Conclusion and perspectives}\label{sect:discussion} 

The objective of this work was to improve the estimation method presented in~\cite{olaye_transport_2026}. To do this, we first proved that the error between~\eqref{eq:scaled_model} and~\eqref{eq:approximation_transport_diffusion} tends to zero when \hbox{$N\rightarrow+\infty$}, with a higher speed than the one of the error between~\eqref{eq:scaled_model} and the approximated model in~\cite{olaye_transport_2026}, see~Theorem~\ref{te:model_approximation} and~\cite[Proposition~$3.2$]{olaye_transport_2026}. The latter and the fact that the Laplace transform of $n_0$ can be computed using the Laplace transform of $u_{\partial}^{(N)}$ for any $N>0$ suggested that it is possible to perform a Laplace transform inversion to improve our estimation method. Our estimations done in Figure~\ref{fig:estimation_gaver_stehfest_noisefree} showed that, when we observe $n_{\partial}^{(N)}$ without noise, this Laplace transform inversion works perfectly well on most biologically relevant initial distributions. The estimations done in Figures~\ref{fig:estimation_gaver_stehfest_random_variables},~\ref{fig:estimation_gaver_stehfest_variation_number_simulations} and~\ref{fig:estimation_gaver_stehfest_other_distributions} have also shown that when there is noise on $n_{\partial}^{(N)}$ related to sampling, the Laplace transform inversion still improves the estimation obtained with the estimator constructed in~\cite{olaye_transport_2026}. We thus have several satisfactions with this new estimation~method. 

Despite these encouraging results, our estimation method has also several limitations. The first one is that despite the improvement of the estimation when the initial distribution has a small variability observed in Figure~\ref{fig:estimation_gaver_stehfest_noisefree}, there are still issues to estimate the initial distribution when it has an extremely small variability, as shown in Figures~\ref{fig:estimation_verysmall_variability_third} and~\ref{fig:estimation_verysmall_variability_fourth}. The second one is that when there is noise on $n_{\partial}^{(N)}$ related to sampling, the improvement obtained with the Laplace transform inversion is not as satisfying as the one we have when we observe noise-free data, see Figures~\ref{fig:estimation_gaver_stehfest_random_variables},~\ref{fig:estimation_gaver_small_number_data_first}-\ref{fig:estimation_gaver_small_number_data_third} and~\ref{fig:estimation_gaver_stehfest_other_distributions}. However, the type of initial distribution studied in Figures~\ref{fig:estimation_verysmall_variability_third} and~\ref{fig:estimation_verysmall_variability_fourth} is not biologically relevant, since the variability of the initial distribution is too small. In addition, the second limitation is not critical in practice, since this is unavoidable that the estimation loses in quality when there is noise on the observation of $n_{\partial}^{(N)}$. The estimation method developed in this work is therefore already useful in practice, despite these limitations. 

The main results we still need to achieve now correspond to theoretical results providing bounds on the error between $\widehat{n}_0^{(N,K)}$ and $n_0$, and between $\overline{n}_0^{(N,K,\alpha)}$ and $n_0$ for all $N>0$ sufficiently large, $K\in\mathbb{N}^*$ and $\alpha \in\left(0,\log(2)^{\frac{1}{2}}\right)$. Precisely, we would like to prove that these errors converge to $0$ when $N\rightarrow+\infty$, $K\rightarrow+\infty$, and when also~$n_d\rightarrow+\infty$ and $\alpha \rightarrow 0$ for the error between $\overline{n}_0^{(N,K,\alpha)}$ and $n_0$. It has been proved \hbox{in~\cite[Theorem~$2.7$-$(a)$ and Proposition~$5.2$]{olaye_transport_2026}} that the estimation errors converge to~$0$ at a rate~$\frac{1}{N}$ for the estimators constructed in~\cite{olaye_transport_2026}. We would like to prove that the estimation errors for the estimators defined in~\eqref{eq:estimator_gaver_stehfest_noise_free} and~\eqref{eq:estimator_gaver_stehfest_random_variables} converge to~$0$ at a rate~$\frac{1}{N^2}$, as shown in Figure~\ref{fig:curve_estimation_errors_logscale}, to justify rigorously that the Laplace transform inversion improves the estimation obtained with the estimator constructed in~\cite{olaye_transport_2026}. We would also like to obtain information on the impact of the number of random variables $n_{d}$ on the estimation error, to better understand how the noise related to sampling affects the quality of estimation. Obtaining these bounds corresponds to a work in progress.

Other perspectives emerge from this work. The first would be to know whether it is possible to obtain model approximations more precise than those obtained in this article, in order to develop estimators of better quality. This perspective would be very interesting, in particular to improve the estimation results presented in Figure~\ref{fig:estimation_verysmall_variability}. The second perspective would be to know if it is possible to develop a method to estimate the smoothing parameter specific to our problem, rather than using the R package \textit{logKDE}. Developing such a method could perhaps improve the estimation results when observing variables distributed according to~$n_{\partial}^{(N)}$, see Figures~\ref{fig:estimation_gaver_stehfest_random_variables},~\ref{fig:estimation_gaver_stehfest_variation_number_simulations}, and~\ref{fig:estimation_gaver_stehfest_other_distributions}. Finally, it would be interesting to see how the estimation methods developed in this work and in~\cite{olaye_transport_2026} can be adapted to age-structured models such as those in~\hbox{\cite{olaye_long-time_2026,benetos_branching_2025}}, which are more biologically relevant. All these perspectives correspond to future~works.

\appendix

\section{Proof of Proposition~\ref{prop:link_laplace_transforms}}\label{appendix:proof_link_laplace}

Let $N > 0$. We denote the set 
$$
\mathcal{P}'_N := \left\{p\in\mathbb{C}\,|\,\text{Re}(p) > 0 \text{ and } Re(q_N(p)) > \max\left(0,\mathcal{R}\left(u_{\partial}^{(N)} \right), - (bm_1)^2\frac{2N}{bm_2}\right)\right\}. 
$$
The proof of Proposition~\ref{prop:link_laplace_transforms} is done in three steps. First, in Step~\hyperlink{paragraph:step1_proof_link_laplace}{$1$}, we prove the following auxiliary result, necessary to obtain~\eqref{eq:link_laplace_transforms}, for all~$t\geq0$ and~$p \in \mathcal{P}'_N$,
\begin{equation}\label{eq:step1_proof_link_laplace}
	\begin{aligned}
		\frac{\dd}{\dd t}\left(\mathcal{L}(u^{(N)}(t,.))(p)\right) &= q_N(p)\mathcal{L}(u^{(N)}(t,.))(p) - u_{\partial}^{(N)}(t)\\ 
		&- bm_1p\int_0^t \exp\left((bm_1)^2\frac{2N}{bm_2}(s-t)\right)u^{(N)}_{\partial}(s)\dd s.
	\end{aligned}
\end{equation}
Then, in Step~\hyperlink{paragraph:step2_proof_link_laplace}{$2$}, we prove that~\eqref{eq:link_laplace_transforms} is true for all $p\in \mathcal{P}'_N$. Finally, in Step~\hyperlink{paragraph:step3_proof_link_laplace}{$3$}, we extend this result to the complex numbers $p\in \mathcal{P}_N$ thanks to the analytic continuation theorem. 

\paragraph{Step $1$:}\hypertarget{paragraph:step1_proof_link_laplace}{} Let us fix $t\geq0$ and $p\in \mathcal{P}'_N$. By multiplying the first line of \eqref{eq:approximation_transport_diffusion} with $\exp\left(-px\right)$, and then  integrating in~$\dd x$, we have 
\begin{equation}\label{eq:proof_link_laplace_transform_intermediate_first}
	\frac{\dd}{\dd t}\left(\mathcal{L}(u^{(N)}(t,.))(p)\right) = bm_1\mathcal{L}\left(\partial_xu^{(N)}(t,.)\right)(p) + \frac{bm_2}{2N}\mathcal{L}\left(\partial_{x}^2u^{(N)}(t,.)\right)(p).
\end{equation}
Therefore, by first using that for all $f\in H^2\left(\mathbb{R}_+\right)$, it holds $\mathcal{L}(f')(p) = p\mathcal{L}(f)(p) - f(0)$ and~\hbox{$\mathcal{L}(f'')(p) = p^2\mathcal{L}(f)(p) - pf(0) - f'(0)$}, see~\cite[p.$264$]{folland_1992}, and then applying~\eqref{eq:definition_qN} and the third line of~\eqref{eq:approximation_transport_diffusion}, we obtain 
\begin{equation}\label{eq:proof_link_laplace_transform_intermediate_second}
	\begin{aligned}
		\frac{\dd}{\dd t}\left(\mathcal{L}\left(u^{(N)}(t,.)\right)(p)\right) &= \left(bm_1p+ \frac{bm_2}{2N}p^2\right)\mathcal{L}\left(u^{(N)}(t,.)\right)(p) -bm_1u^{(N)}(t,0)\\ 
		&- \frac{bm_2p}{2N}u^{(N)}(t,0) -\frac{bm_2}{2N}\partial_xu^{(N)}(t,0) \\ 
		&= q_N(p)\mathcal{L}\left(u^{(N)}(t,.)\right)(p) - u_{\partial}^{(N)}(t) - \frac{bm_2p}{2N}u^{(N)}(t,0).
	\end{aligned}
\end{equation}
Observe that by plugging the second line of~\eqref{eq:approximation_transport_diffusion} in its third line, and multiplying everything by $bm_1\frac{2N}{bm_2}$, we have 
$$
bm_1\frac{2N}{bm_2}u_{\partial}^{(N)}(t) = \left(bm_1\right)^2\frac{2N}{bm_2}u^{(N)}(t,0) + \partial_tu^{(N)}(t,0).
$$
This is a classical first order linear differential equation. We can solve it using the usual formula for this type of equation, see for example~\cite[Theorem~$2$, p.$41$]{coddington_introduction_1989}, to obtain that (we recall that~$u^{(N)}(0,0) = n_0(0) = 0$)
$$
u^{(N)}(t,0) = bm_1\frac{2N}{bm_2}\int_0^t \exp\left((bm_1)^2\frac{2N}{bm_2}(s-t)\right)u^{(N)}_{\partial}(s)\dd s.
$$
Then, by plugging the latter in \eqref{eq:proof_link_laplace_transform_intermediate_second}, we obtain~\eqref{eq:step1_proof_link_laplace}.

\paragraph{Step $2$:}\hypertarget{paragraph:step2_proof_link_laplace}{}  To prove~\eqref{eq:link_laplace_transforms} for all $p\in \mathcal{P}'_N$, we begin by obtaining the equation verified by the function $\left(t,p\right) \mapsto \exp\left(-q_N(p)t\right)\mathcal{L}(u^{(N)}(t,.))(p)$, which is equal to $p\mapsto \mathcal{L}(n_0)(p)$ when $t=0$. First, we multiply both sides of~\eqref{eq:step1_proof_link_laplace} by~$\exp\left(-q_N(p)t\right)$. Then, we put the first term of the right-hand side to the left-hand side, in view of the following equality, for all $t\geq0$, $p\in \mathcal{P}'_N$, and~$f\in C^1\left(\mathbb{R}_+\right)$,
$$
\exp\left(-q_N(p)t\right)\frac{\dd }{\dd t}f(t) - q_N(p)\exp\left(-q_N(p)t\right)f(t) = \frac{\dd }{\dd t}\left(\exp\left(-q_N(p)t\right)f(t)\right). 
$$
We obtain that for all $t\geq0$ and $p\in \mathcal{P}'_N$
$$
\begin{aligned}
	\frac{\dd }{\dd t}\left[\exp\left(-q_N(p)t\right)\mathcal{L}\left(u^{(N)}(t,.)\right)(p)\right] &=  -\exp\left(-q_N(p)t\right)u_{\partial}^{(N)}(t) \\ 
	&\hspace{-9mm}-bm_1p\exp\left(-q_N(p)t\right)\left[\int_0^t \exp\left((bm_1)^2\frac{2N}{bm_2}(s-t)\right) u_{\partial}^{(N)}(s) \dd s\right].
\end{aligned}
$$
From the above equality, we conclude the step. To do so, we first integrate in $\dd t$, in view of the fact that for all $p\in\mathbb{C}$ such that $Re(p)> 0$ and $Re(q_N(p))>0$, it holds by Theorem~\ref{te:model_approximation}-\ref{te:model_approximation_first} and~Remark~\ref{rem:mass_conservation} (the condition $N > \frac{\lambda m_2}{2m_1}$ is not an issue, since~\eqref{eq:assumptions_main_result_chaplaplace} is also verified for a smaller coefficient in the exponentials than $\lambda$)
$$
\begin{aligned}
	\underset{t\rightarrow+\infty}{\limsup} \left|\exp\left(-q_N(p)t\right)\mathcal{L}\left(u^{(N)}(t,.)\right)(p)\right| &\leq \underset{t\rightarrow+\infty}{\limsup} \left[\exp\left(-q_N(p)t\right)\left|\left|\left(u^{(N)}-n^{(N)}\right)(t,.)\right|\right|_{L^1\left(\mathbb{R}_+\right)}\right] \\
	&+ \underset{t\rightarrow+\infty}{\limsup} \left[\exp\left(-q_N(p)t\right)\left|\left| n^{(N)}(t,.)\right|\right|_{L^1\left(\mathbb{R}_+\right)}\right] = 0. 
\end{aligned}
$$
We obtain that for all $p\in\mathcal{P}'_N$
\begin{equation}\label{eq:proof_link_laplace_transform_intermediate_third}
	\begin{aligned}
		-\mathcal{L}\left(n_0\right)(p) &=  - \mathcal{L}\left(u_{\partial}^{(N)}\right)\left(q_N(p)\right) \\ 
		&-bm_1p\int_0^{+\infty}\exp\left(-q_N(p)t\right)\left[\int_0^t \exp\left((bm_1)^2\frac{2N}{bm_2}(s-t)\right)u_{\partial}^{(N)}(s)\dd s\right] \dd t.
	\end{aligned}
\end{equation}
Then, we observe that by changing the bound of the integral below, we have for all $p\in\mathcal{P}'_N$
\begin{equation}\label{eq:proof_link_laplace_transform_intermediate_fourth}
	\begin{aligned}
		&\int_0^{+\infty}\exp\left(-q_N(p)t\right)\left[\int_0^t \exp\left((bm_1)^2\frac{2N}{bm_2}(s-t)\right)u_{\partial}^{(N)}(s)\dd s\right] \dd t \\ 
		&= \int_0^{+\infty}\exp\left((bm_1)^2\frac{2N}{bm_2}s\right)\left[\int_s^{+\infty} \exp\left[-\left((bm_1)^2\frac{2N}{bm_2} + q_N(p)\right)t\right]\dd t\right]u_{\partial}^{(N)}(s) \dd s \\
		&= \frac{1}{(bm_1)^2\frac{2N}{bm_2} + q_N(p)}\int_0^{+\infty}\exp\left(-q_N(p)s\right)u_{\partial}^{(N)}(s) \dd s = \frac{\mathcal{L}\left(u_{\partial}^{(N)}\right)\left(q_N(p)\right)}{(bm_1)^2\frac{2N}{bm_2} + q_N(p)}.
	\end{aligned}
\end{equation}
Finally, we combine~\eqref{eq:proof_link_laplace_transform_intermediate_third} and~\eqref{eq:proof_link_laplace_transform_intermediate_fourth}. We obtain that~\eqref{eq:link_laplace_transforms} is true for all $p\in\mathcal{P}'_N$. 

\paragraph{Step $3$:}\hypertarget{paragraph:step3_proof_link_laplace}{} Now, we aim to extend this equality for the largest possible set, namely the set~$\mathcal{P}_N$ defined in~\eqref{eq:definition_P}. First, notice that $\mathcal{L}(n_0)$ and $\varphi:p \mapsto \left(1 + p \frac{bm_1}{(bm_1)^2\frac{2N}{bm_2} + q_N(p)}\right)\mathcal{L}\left(u_{\partial}^{(N)}\right)(q_N(p))$ are both analytic on $\mathcal{P}_N$. Then, notice that $z \in \mathcal{P}_N\mapsto \mathcal{L}(n_0)(z)$ is an analytic continuation of 
$$
z \in \mathcal{P}'_N \mapsto \mathcal{L}(n_0)(z),
$$
and that $z \in \mathcal{P}_N \mapsto \varphi(z)$ is an analytic continuation of 
$$
z \in \mathcal{P}'_N \mapsto \varphi(z) = \mathcal{L}(n_0)(z).
$$
By uniqueness of the analytic continuation, $z \in \mathcal{P}_N \mapsto \mathcal{L}(n_0)(z)$ and $z \in \mathcal{P}_N \mapsto \varphi(z)$ are equal. Then, from this last result, we have that~\eqref{eq:link_laplace_transforms} holds for all $p \in \mathcal{P}_N$. \qed

\section{Proof of Proposition~\ref{prop:uniqueness_dirichlet}}\label{appendix:proof_uniqueness_dirichlet}

This proof is inspired by what is done in the proof of~\cite[Proposition~$2.2$]{doumic_asymptotic_2026}, and its main argument is the Lumer-Phillips theorem, see~\cite[Theorem $2.6$, p.$106$]{bensoussan_representation_2007}. Let us denote the operator~\hbox{$A : H_0^1\left(\mathbb{R}_+\right)\cap H^2\left(\mathbb{R}_+\right) \rightarrow L^2\left(\mathbb{R}_+\right)$}, defined for all $u\in H_0^1\left(\mathbb{R}_+\right)\cap H^2\left(\mathbb{R}_+\right)$ as 
$$
A\left(u\right) = bm_1u' + \frac{bm_2}{2N}u''.
$$
For all $u\in H_0^1\left(\mathbb{R}_+\right)\cap H^2\left(\mathbb{R}_+\right)$, as $\left(u'u\right)' = u''u + \left(u'\right)^2 \geq u''u$, and as $u(0) = 0$, we have that 
$$
\begin{aligned}
	\langle A\left(u\right),u\rangle_{L^2\left(\mathbb{R}_+\right)} &= bm_1 \int_0^{+\infty} u'(x)u(x) \dd x +\frac{bm_2}{2N}\int_0^{+\infty} u''(x)u(x) \dd x \\
	&\leq \frac{bm_1}{2} \int_0^{+\infty} \left(u^2\right)'(x) \dd x + \frac{bm_2}{2N}\int_0^{+\infty}\left(u'u\right)'(x) \dd x = 0.
\end{aligned}
$$
\noindent Then, $A$ is a diffusive operator. We thus only have to prove that $A$ is maximal, and the assumptions of the Lumer-Phillips theorem will be verified. To prove this, we introduce for all~$\lambda >0$ the bilinear form $a_{\lambda}: \left(H_0^1\left(\mathbb{R}_+\right)\right)^2\rightarrow \mathbb{R}$, defined for all $(u,v)\in \left(H_0^1\left(\mathbb{R}_+\right)\right)^2$ as 
$$
a_{\lambda}(u,v) = \lambda \langle u,v\rangle_{L^2\left(\mathbb{R}_+\right)} - bm_1 \langle u',v\rangle_{L^2\left(\mathbb{R}_+\right)} + \frac{bm_2}{2N} \langle u',v'\rangle_{L^2\left(\mathbb{R}_+\right)}.
$$
We have that $a_{\lambda}(u,u) = \lambda \left|\left| u\right|\right|^2_{L^2\left(\mathbb{R}_+\right)} + \frac{bm_2}{2N} \left|\left|u'\right|\right|^2_{L^2\left(\mathbb{R}_+\right)}$ for all $\lambda > 0$ and $u\in H_0^1\left(\mathbb{R}_+\right)$, so that $a_{\lambda}$ is coercive. This yields by the Lax-Milgram theorem that for all $\lambda > 0$, $f\in L^2\left(\mathbb{R}_+\right)$, there exists a unique $u_{\lambda,f}\in H_0^1\left(\mathbb{R}_+\right)$ such that $a_{\lambda}(u_{\lambda,f},v) = \langle f,v\rangle_{L^2\left(\mathbb{R}_+\right)}$ for all $v\in H_0^1\left(\mathbb{R}_+\right)$. In addition, this function $u_{\lambda,f}$ belongs to $H^2\left(\mathbb{R}_+\right)$, as it holds for all $h\in \mathcal{C}_c\left(\mathbb{R}_+^*\right)$
$$
\frac{bm_2}{2N} \langle u''_{\lambda,f}, h\rangle_{L^2\left(\mathbb{R}_+\right)} = \lambda \langle u_{\lambda,f},h\rangle_{L^2\left(\mathbb{R}_+\right)} - bm_1 \langle u_{\lambda,f}',h\rangle_{L^2\left(\mathbb{R}_+\right)} -  \langle f,h\rangle_{L^2\left(\mathbb{R}_+\right)},
$$
with $\lambda u_{\lambda,f} - bm_1 u_{\lambda,f}' - f \in L^2\left(\mathbb{R}_+\right)$. Then, from these two points, we have that for all $\lambda > 0$ and $f\in L^2\left(\mathbb{R}_+\right)$, there exists $u_{\lambda,f}\in H_0^1\left(\mathbb{R}_+\right)\cap H^2\left(\mathbb{R}_+\right)$ such that $\lambda u_{f,\lambda}- A\left(u_{f,\lambda}\right)= f$, so that $A$ is maximal. As $A$ is both maximal and dissipative, and as $H_0^1\left(\mathbb{R}_+\right)\cap H^2\left(\mathbb{R}_+\right)$ is dense in~$L^2\left(\mathbb{R}_+\right)$, we have by the Lumer-Phillips theorem that the proposition is proved. \qed

\section{Proof of Proposition~\ref{prop:explicit_laplace_erlang}}\label{appendix:explicit_laplace_erlang}

In all this proof, we denote for all $n\in\mathbb{N}$, $k\in\llbracket 0, n\rrbracket$, $x\in\mathbb{R}^{n-k+1}$, the polynomial 
\begin{equation}\label{eq:dft_incomplete_Bell_polynomial}
	B_{n,k}(x_1, \dots, x_{n-k+1}) = \sum_{\substack{j_1,\hdots, j_{n-k+1}\geq 0  \\ j_1 + j_2 + \dots + j_{n-k+1} = k \\ j_1 + 2j_2+ \hdots + (n-k+1)j_{n-k+1} = n}} \frac{n!}{j_1! \dots j_{n-k+1}!} \prod_{i = 1}^{n-k+1}\left(\frac{x_{i}}{i!}\right)^{j_i},
\end{equation}	
which is called in the literature \textit{partial Bell polynomial} of weight $n$ and degree $k$, see~\cite[p.$134$, Eq.~$3$d]{comtet_advanced_1974}. We also denote the following polynomial, for all $n\in\mathbb{N}$, $x\in\mathbb{R}^n$,
\begin{equation}\label{eq:dft_complete_Bell_polynomial}
	B_n(x_1, \dots, x_n) = \sum_{k=0}^{n} B_{n,k}(x_1, \dots, x_{n-k+1}),
\end{equation}
which is called \textit{complete Bell polynomial} of weight $n$, see~\cite[p.$134$, Eq.~$3$c]{comtet_advanced_1974}. We finally define the following function and constant, that allow us to simplify notations, for all $N > 0$ and $\left(t,x\right)\in\mathbb{R}_+\times\mathbb{R}_+$, 
$$
\begin{aligned}
	\varphi_{\ell}\left(t,x\right)  := B_{\ell-1}\bigg[\!-b\mathcal{L}\left(g\text{Id}\right)\left(\frac{\beta}{N}\right)t-x,&\left(-1\right)^{2}\frac{b}{N}\mathcal{L}\left(g\text{Id}^2\right)\left(\frac{\beta}{N}\right)t,\hdots,\\
	&\left(-1\right)^{\ell-1}\frac{b}{N^{\ell - 2}}\mathcal{L}\left(g\text{Id}^{\ell - 1}\right)\left(\frac{\beta}{N}\right)t\bigg],\\
	\tilde{\beta}_N := \frac{N}{m_1}	\left[1 - \mathcal{L}(g)\left(\frac{\beta}{N}\right)\right].\hspace{11.5mm}&
\end{aligned}
$$
By combining~\cite[Proposition~A.$9$]{olaye_transport_2026} with the fact that $\frac{\dd^{n}}{\dd \alpha^{n}}\left(\mathcal{L}(g)\right) = \left(-1\right)^n\mathcal{L}\left(g\text{Id}^n\right)$ for all $n\in\mathbb{N}$, see~\hbox{\cite[p.$264$, $8.$]{folland_1992}}, we have for all $N>0$ and $(t,x)\in\mathbb{R}_+\times\mathbb{R}_+$ that
\begin{equation}\label{eq:proof_prop_explicit_erlang_first}
	n^{(N)}(t,x)= (-1)^{\ell -1}\frac{\beta^\ell}{\left(\ell -1\right)!}\varphi_{\ell}\left(t,x\right)\exp\left(-bm_1 \tilde{\beta}_Nt-\beta x\right). 
\end{equation}
In addition, in view of~\cite[Theorem $2$-$(i)$]{wheeler_bell_1987},~\eqref{eq:dft_complete_Bell_polynomial} and~\cite[p.$135$]{comtet_advanced_1974}, the two following equalities hold, for all $ (y,z)\in\left(\mathbb{R}^{\ell - 1}\right)^2$, $\gamma\in\mathbb{R}$, $\omega\in\mathbb{R}$, 
\begin{equation}\label{eq:proof_prop_explicit_erlang_first_bis}
	\begin{aligned}
		B_{\ell-1}\left[y_1 + \gamma \omega z_1, \dots, y_{\ell-1} +  \gamma \omega^{\ell-1} z_{\ell-1}\right] &= \sum_{j=0}^{\ell-1} \binom{\ell-1}{j} B_{\ell-1-j}\left[y_1, \dots, y_{\ell-1-j}\right] \\
		&\times B_{j}\left[\gamma \omega z_1, \dots, \gamma \omega^j z_{j}\right],\\
		\forall j\in\llbracket1,\ell-1\rrbracket:\hspace{3mm} B_{j}\left[\gamma \omega z_1, \dots, \gamma \omega^j z_{j}\right] &= \omega^j\sum_{i=0}^{j} \gamma^iB_{j,i}\left[z_1, \dots, z_{i}\right].
	\end{aligned}
\end{equation}
Then, by using~\eqref{eq:proof_prop_explicit_erlang_first_bis} for $y = (-x,0,\hdots,0)$, $z = \left(b\mathcal{L}\left(g\text{Id}\right)\left(\frac{\beta}{N}\right),\hdots,\frac{b}{N^{\ell-2}}\mathcal{L}\left(g\text{Id}^{\ell-1}\right)\left(\frac{\beta}{N}\right)\right)$, $\gamma = t$ and~\hbox{$\omega = -1$} to develop the function $\varphi_\ell$ in~\eqref{eq:proof_prop_explicit_erlang_first}, we obtain that for all $N>0$, \hbox{$(t,x)\in\mathbb{R}_+\times\mathbb{R}_+$},
\begin{equation}\label{eq:proof_prop_explicit_erlang_second}
	\begin{aligned}
		n^{(N)}(t,x) &= (-1)^{\ell -1}\frac{\beta^\ell}{\left(\ell -1\right)!}\sum_{j=0}^{\ell-1}\sum_{i=0}^{j} \binom{\ell-1}{j} B_{\ell-1-j}\left[-x,0, \dots, 0\right] \left(-1\right)^{j}t^{i} \\
		&\times B_{j,i}\left[b\mathcal{L}\left(g\text{Id}\right)\left(\frac{\beta}{N}\right),\hdots,\frac{b}{N^{j-1}}\mathcal{L}\left(g\text{Id}^{j}\right)\left(\frac{\beta}{N}\right)\right]\exp\left(-bm_1 \tilde{\beta}_Nt-\beta x\right). 
	\end{aligned}
\end{equation}
In view of~\cite[p.$136$, Eq.~$3n'$]{comtet_advanced_1974} and~\eqref{eq:dft_complete_Bell_polynomial}, we have that $B_{\ell-1-j}\left[-x,0, \dots, 0\right] = \left(-1\right)^{\ell-1-j}x^{\ell-1-j}$ for all~\hbox{$x\in\mathbb{R}_+$}, $j\in\llbracket0,\ell-1\rrbracket$. Therefore, by plugging this last equality in~\eqref{eq:proof_prop_explicit_erlang_second}, and then using the second line of~\eqref{eq:scaled_model} to compute $n_{\partial}^{(N)}$, we obtain that for all $N>0$, $t >0$,
$$
\begin{aligned}
	n_{\partial}^{(N)}(t) &= b\frac{\beta^\ell}{\left(\ell -1\right)!}\sum_{j=0}^{\ell-1}\sum_{i=0}^{j} \binom{\ell-1}{j}\frac{1}{N^{\ell-1-j}} \mathcal{L}\left((1-G)\text{Id}^{\ell-1-j}\right)\left(\frac{\beta}{N}\right)  \\
	&\times B_{j,i}\left[b\mathcal{L}\left(g\text{Id}\right)\left(\frac{\beta}{N}\right),\hdots,\frac{b}{N^{j-1}}\mathcal{L}\left(g\text{Id}^{j}\right)\left(\frac{\beta}{N}\right)\right]t^{i}\exp\left(-bm_1 \tilde{\beta}_Nt\right).
\end{aligned}
$$
The proposition is thus proved by switching the two sums in the above, and then using the fact that in view of~\cite[p.$264$, $12.$]{folland_1992}, it holds for all~$i\in\llbracket0,\ell-1\rrbracket$ and $p\in\mathbb{C}$ verifying $\text{Re}(p) > -bm_1 \tilde{\beta}_N$
$$
\mathcal{L}\left(\text{Id}^i \exp\left(-bm_1 \tilde{\beta}_N\text{Id}\right)\right)(p) = \frac{i!}{\left(p +bm_1 \tilde{\beta}_N\right)^{i+1}}.
$$
\qed

\section{Why we do not take \texorpdfstring{$K$}{K} very large in Section~\ref{subsubsect:estimation_results_random_variables}?}\label{sect:round_off_errors}

When we use~\eqref{eq:estimator_gaver_stehfest_random_variables} to estimate $n_0$ in Figures~\ref{fig:estimation_gaver_stehfest_random_variables},~\ref{fig:estimation_gaver_stehfest_variation_number_simulations} and~\ref{fig:estimation_gaver_stehfest_other_distributions}, we do not take $K$ very large. This may seem contradictory with~\eqref{eq:gaver_stehfest_formula} at first sight, which states that the Gaver-Stehfest algorithm converges as $K\rightarrow +\infty$. In fact, it has been proved in~\cite[Theorem~$1.1$]{kuznetsov_rate_2022} that the Gaver-Stehfest algorithm converges exponentially fast as $K\rightarrow +\infty$ when the inverse Laplace transform is regular enough, so that the speed of convergence is very quick. Hence, taking $K$ around $10$ or~$15$ in Figures~\ref{fig:estimation_gaver_stehfest_random_variables},~\ref{fig:estimation_gaver_stehfest_variation_number_simulations} and~\ref{fig:estimation_gaver_stehfest_other_distributions} is sufficient to obtain a good approximation.

Taking a larger parameter $K$ for the estimations done in Section~\ref{subsubsect:estimation_results_random_variables} can even create big estimation errors related to numerical instability. That is why, we do not take it larger in these estimations. To illustrate the issues related to taking $K$ very large, we have represented in Figure~\ref{fig:illustration_round_off_errors} exactly the same estimations as those done in Figures~\ref{fig:estimation_gaver_stehfest_random_variables_intermediate_first} and~\ref{fig:estimation_gaver_stehfest_random_variables_intermediate_third}, except that we now take $K = 18$ in both estimations. We observe in Figure~\ref{fig:illustration_round_off_errors} that for $K = 18$, both estimations are now very poor. Specifically, oscillations with large amplitude appear at the estimated curve. These oscillations correspond in fact to numerical errors. Indeed, as the Gaver-Stehfest algorithm corresponds to the sum of large terms with alternating sign, see~\eqref{eq:gaver_stehfest_formula}, round-off errors appear when we use this algorithm. The absolute value of the terms we sum in the Gaver-Stehfest algorithm increases when its parameter $K$ increases. Hence, increasing $K$ also increases the impact of the round-off errors on the estimation, and that is why we do not take it larger in Figures~\ref{fig:estimation_gaver_stehfest_random_variables},~\ref{fig:estimation_gaver_stehfest_variation_number_simulations}, and~\ref{fig:estimation_gaver_stehfest_other_distributions}. 

\begin{figure}[!htb]
	\centering
	\begin{subfigure}[t]{0.485\textwidth}
		\centering
		\includegraphics[scale = 0.41]{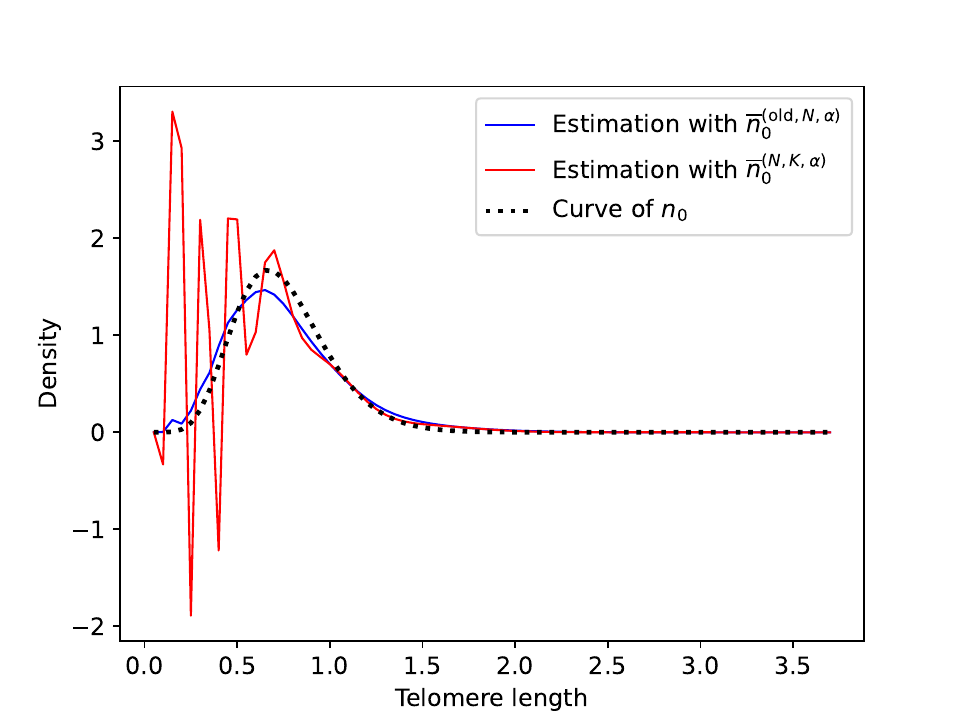}
		\caption{Estimation for $n_0 = h_{9,12}$ and $K = 18$.}
	\end{subfigure}
	\hfill
	\begin{subfigure}[t]{0.485\textwidth}
		\centering
		\includegraphics[scale = 0.41]{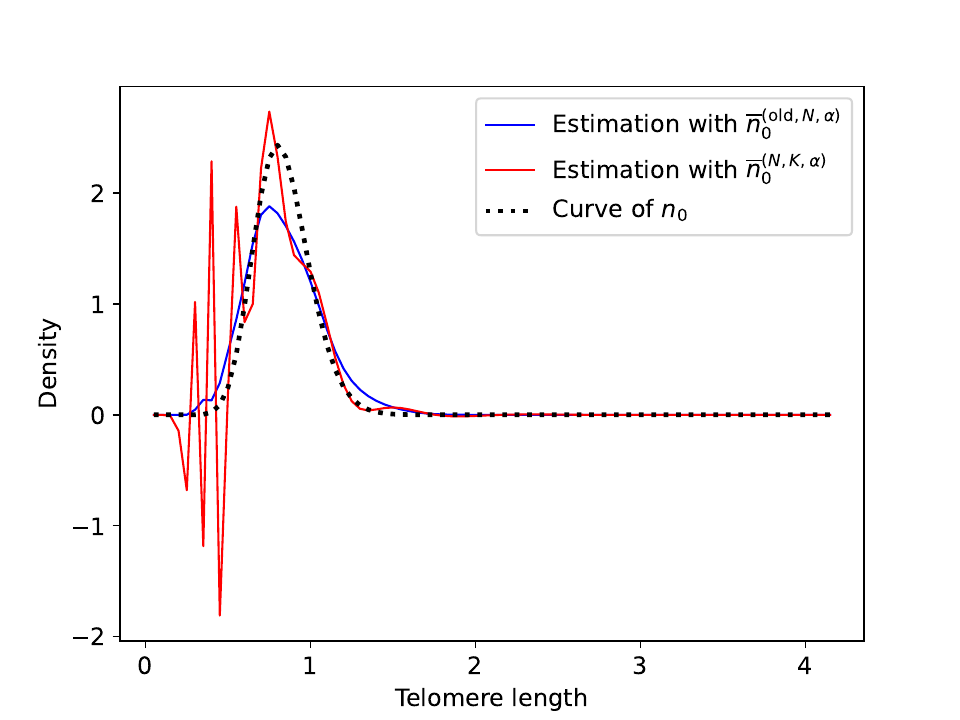}
		\caption{Estimation for $n_0 = h_{25,30}$ and $K = 18$.}
	\end{subfigure}
	\caption{Estimation results in the same setting as in Figures~\ref{fig:estimation_gaver_stehfest_random_variables_intermediate_first} and~\ref{fig:estimation_gaver_stehfest_random_variables_intermediate_second}, except that $K = 18$. \textit{For plotting the curves, the numerical precision of the computations was set to $200$ digits.}}\label{fig:illustration_round_off_errors}
\end{figure}

The problem of round-off errors is in fact very common when working with the Gaver-Stehfest algorithm. The main way to manage this issue is to increase the numerical precision of the computations. In our case, we have set the precision to $200$ decimal digits. We do not use a higher precision because it would be too computationally expensive. One can notice that in Figures~\ref{fig:estimation_gaver_stehfest_noisefree} and~\ref{fig:estimation_verysmall_variability}, we have succeeded to take a large parameter $K$. In the following work of the author~\cite{olaye_estimation_2025} in which the Gaver-Stehfest algorithm is used, see in particular~\cite[Appendix~$6.1.2$]{olaye_estimation_2025}, we have also succeeded in taking a large parameter~$K$. The reason is that the function $\overline{n}_{\partial}^{(N,\alpha)}$, present in the definition of $\overline{n}_0^{(N,K,\alpha)}$ in~\eqref{eq:estimator_gaver_stehfest_random_variables}, and introduced in~\eqref{eq:estimation_cemetery_gamma} corresponds to the sum of the smoothed approximation of Dirac measures, see Section~\ref{subsubsect:estimation_gaver_stehfest_random_variables}. Thus, even if its curve is smooth, its Laplace transform has a more sophisticated form than the ones of the functions in Figures~\ref{fig:estimation_gaver_stehfest_noisefree} and~\ref{fig:estimation_verysmall_variability}, or in~\cite[Appendix~$6.1.2$]{olaye_estimation_2025}. The issues of round-off errors for the Gaver-Stehfest algorithm thus strongly depend on the function for which we apply this algorithm. %

\paragraph{Acknowledgements.} This work was partially funded by the Fondation Mathématique Jacques Hadamard and by the European Union ERC-2024-COG MUSEUM-101170884. Additional support was provided by the France 2030 program, administered by the French National Research Agency (ANR), under grant ANR-23-EXMA-0005. Views and opinions expressed are however those of the author(s) only and do not necessarily reflect those of the European Union or the European Research Council Executive Agency (ERCEA). Neither the European Union nor the granting authority can be held responsible for them. The author warmly thanks Marie Doumic for her valuable guidance during the realisation of this work.

\printbibliography
\end{document}